\numberwithin{equation}{section}
\newtheorem{theorem}{Theorem}[section]
\newtheorem{thm}[theorem]{Theorem}
\newtheorem{lemma}[theorem]{Lemma}
\newtheorem{lem}[theorem]{Lemma}
\newtheorem{prop}[theorem]{Proposition}
\newtheorem{coro}[theorem]{Corollary}
\theoremstyle{definition}
\newtheorem{defi}[theorem]{Definition}
\newtheorem{remark}[theorem]{Remark}
 \newtheorem*{ackn}{Acknowledgements}
 \newtheorem*{thmA}{Theorem A} 
 \newtheorem*{thmB}{Theorem B} 
\newtheorem*{thmC}{Theorem C} 
\newtheorem*{thmD}{Theorem D} 
\newtheorem*{thmE}{Theorem E}
 \newcommand{\R}{\mathbb R}
 \newcommand{\Q}{\mathbb Q}
 \newcommand{\C}{\mathbb C}
 \newcommand{\e}{\varepsilon}
 \newcommand{\f}{\varphi}
 \newcommand{\p}{\psi}
 \newcommand \psh {{\rm PSH}}
 \newcommand \PSH {{\rm PSH}}
 \newcommand \Om {\Omega}
\newcommand{\Tr}{{\rm Tr}}
\subjclass[2010]{32W20, 32U05, 32Q15, 35A23}
\keywords{Monge-Amp\`ere  equation, a priori estimates}
\begin{document}

\title[Quasi-plurisubharmonic envelopes 3]{Quasi-plurisubharmonic envelopes 3: Solving Monge-Amp\`ere equations on hermitian manifolds}

\author{Vincent Guedj \& Chinh H. Lu}

\address{Institut de Math\'ematiques de Toulouse   \\ Universit\'e de Toulouse \\
118 route de Narbonne \\
31400 Toulouse, France\\}

\email{\href{mailto:vincent.guedj@math.univ-toulouse.fr}{vincent.guedj@math.univ-toulouse.fr}}
\urladdr{\href{https://www.math.univ-toulouse.fr/~guedj}{https://www.math.univ-toulouse.fr/~guedj/}}

\address{Universit\'e Paris-Saclay, CNRS, Laboratoire de Math\'ematiques d'Orsay, 91405, Orsay, France.}

\email{\href{mailto:hoang-chinh.lu@universite-paris-saclay.fr}{hoang-chinh.lu@universite-paris-saclay.fr}}
\urladdr{\href{https://www.imo.universite-paris-saclay.fr/~lu/}{https://www.imo.universite-paris-saclay.fr/~lu/}}
\date{\today}

 \begin{abstract}
 We develop a new   approach to $L^{\infty}$-a priori estimates for degenerate complex Monge-Amp\`ere equations on complex manifolds.
 It only relies on compactness and envelopes properties of quasi-plurisubharmonic functions.
  In a prequel \cite{GL21a} we have shown how this method allows one to 
 obtain new and efficient proofs of several fundamental
 results in K\"ahler geometry.
 In \cite{GL21b} we have studied the behavior of Monge-Amp\`ere volumes on hermitian manifolds.
We extend here the techniques of \cite{GL21a} to  the hermitian setting
and use the bounds established in \cite{GL21b}, producing
new relative a priori estimates, as well as several existence results for degenerate complex Monge-Amp\`ere
equations on compact hermitian manifolds.
 \end{abstract}

 \maketitle

\tableofcontents

\section*{Introduction}

Complex Monge-Amp\`ere equations have been one of the most powerful tools in K\"ahler geometry since 
Yau's solution to the Calabi conjecture \cite{Yau78}.
In recent years degenerate complex Monge-Amp\`ere equations have been intensively studied by many authors,
in relation to the Minimal Model Program
 (see \cite{GZbook,DonICM18,BBEGZ19,BBJ15} and the references therein).
 The main analytical input came here from pluripotential theory which allowed Ko{\l}odziej \cite{Kol98}
 to establish uniform a priori estimates 
in quite degenerate settings.

The study of complex Monge-Amp\`ere equations on compact hermitian  manifolds
was undertaken by Cherrier \cite{Cher87} and Hanani \cite{Han96}.
It has gained considerable interest in the last decade, after Tosatti and Weinkove
solved an appropriate version of Yau's theorem in \cite{TW10},
following important progress by Guan-Li \cite{GL10}
and geometric motivation for constructing special hermitian metrics 
(see e.g. \cite{FLY12}).
The smooth Gauduchon-Calabi-Yau conjecture has been solved   by
Sz\'ek\'elyhidi-Tosatti-Weinkove \cite{STW17}, while 
the pluripotential
theory has been partially extended by Dinew, Ko{\l}odziej, and Nguyen \cite{DK12,KN15,Din16, KN19},
allowing these authors to establish various uniform a priori estimates
in a non K\"ahler setting.

In \cite{GL21a} we have developed 
a new approach for establishing   uniform a priori estimates,
restricting to the context of K\"ahler manifolds for simplicity.
While the pluripotential approach consists in measuring the Monge-Amp\`ere capacity of
sublevel sets $(\f<-t)$, we directly measure the volume of the latter, avoiding
delicate integration by parts. Our approach thus applies in
  the hermitian   setting, providing several new results
that we now describe more precisely.

 \smallskip

We let $X$ denote a compact complex manifold of complex dimension $n$, equipped with a 
hermitian metric $\omega_X$. We fix $\omega$ a semi-positive $(1,1)$-form and set
$$
v_-(\omega):=\inf \left\{ \int_X (\omega+dd^c u)^n \; : \; u \in \PSH(X,\omega) \cap {L}^{\infty}(X)  \right\},
$$
where $d=\partial+\overline{\partial}$, $d^c=i (\overline{\partial}-\partial)$,
and $\PSH(X,\omega)$ is the set of $\omega$-plurisubharmonic functions: these are
functions $u$ which are locally   the sum of 
a smooth and a plurisubharmonic function,  and such that
$\omega+dd^c u \geq 0$ is a positive current.

When $\omega$ is closed, simple integration by parts reveal that $v_-(\omega)=\int_X \omega^n$
is positive as soon as the differential form $\omega$ is positive at some point. 
Bounding from below
$v_-(\omega)$ is a much more delicate issue in general which we discuss at length in \cite{GL21b}.
Our first main result is the following uniform a priori estimate when $v_-(\omega)$ is positive
(Theorem \ref{thm:uniformhermitian}):

\begin{thmA}
{\it 
 Let $\omega$ be semi-positive with $v_-(\omega)>0$.
   Let $\mu$ be a probability measure  such that
  $\PSH(X,\omega) \subset L^m(\mu)$ for some $m>n$.
Any solution $\f \in \PSH(X,\omega) \cap L^{\infty}(X)$  to 
 $(\omega+dd^c \f)^n=c \mu$, where $c>0$, satisfies
 $$
 {\rm Osc}_X (\f) \leq  T
 $$
 for some uniform constant $T$ which depends on  
an upper bound on $\frac{c}{v_-(\omega)}$ and
 $$
A_m({\mu}):= \sup \left\{\left( \int_X (-\p)^m d\mu\right)^{\frac{1}{m}} \; : \;  \p \in \PSH(X,\omega) \text{ with } \sup_X \p=0 \right\}.
 $$
 }
\end{thmA}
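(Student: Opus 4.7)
My plan is to adapt the envelope/volume approach of \cite{GL21a} to the hermitian setting, using the Monge-Amp\`ere volume bounds of \cite{GL21b} to absorb the non-K\"ahler error terms. After replacing $\f$ by $\f - \sup_X \f$ we may assume $\sup_X \f = 0$, which reduces matters to a uniform upper bound on $M := -\inf_X \f$ in terms of $c/v_-(\omega)$ and $A_m(\mu)$.

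The core step is an envelope comparison at each level $t > 0$. Introduce
$$\psi_t := P_\omega\bigl(\min(\f + t, 0)\bigr),$$
the largest $\omega$-psh minorant of $\min(\f+t,0)$; it lies in $\PSH(X,\omega) \cap L^\infty(X)$ with $\sup_X \psi_t = 0$. Directly from the definition of $v_-(\omega)$,
$$\int_X (\omega + dd^c \psi_t)^n \ge v_-(\omega).$$
The classical orthogonality of Monge-Amp\`ere masses on the contact set $\{\psi_t = \min(\f+t,0)\}$, modified by the hermitian Monge-Amp\`ere volume estimates of \cite{GL21b} to handle the fact that $\omega$ is not closed, should give a reverse bound
$$\int_X (\omega + dd^c \psi_t)^n \le c\,\mu(\{\f < -t\}) + \mathcal{R}(t),$$
where $\mathcal{R}(t)$ is a remainder coming from $d\omega \ne 0$ that is controlled quantitatively by \cite{GL21b}. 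The hypothesis $\PSH(X,\omega) \subset L^m(\mu)$ yields the Chebyshev bound $\mu(\{\f < -t\}) \le A_m(\mu)^m \, t^{-m}$, and combining the three inequalities produces a scalar estimate of the shape
$$v_-(\omega) \,\le\, c\,A_m(\mu)^m\,t^{-m} + \mathcal{R}(t).$$
Since $m > n$ the dominant right-hand term decays in $t$, so this forces $t$ to be bounded in terms of the stated data; a De Giorgi-type iteration as in \cite{GL21a}, applied to $f(t):=\mu(\{\f<-t\})$, then sharpens the threshold into the desired oscillation bound.

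\emph{Main obstacle.} The chief difficulty is controlling the hermitian remainder $\mathcal{R}(t)$. In the K\"ahler case of \cite{GL21a} integration by parts is clean and no such term appears, so the envelope comparison is essentially automatic. Here, torsion terms arising from $d\omega \ne 0$ must be estimated and absorbed; this is precisely where the uniform Monge-Amp\`ere volume bounds of \cite{GL21b}, together with the hypothesis $v_-(\omega) > 0$, become indispensable, both to lower-bound the MA mass of $\psi_t$ and to dominate the hermitian correction at every level $t$.
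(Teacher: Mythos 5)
There is a genuine gap at the heart of your scheme: the ``reverse bound'' $\int_X(\omega+dd^c\psi_t)^n\le c\,\mu(\f<-t)+\mathcal R(t)$ with a controllable remainder does not hold in the form you need. For $\psi_t=P_\omega(\min(\f+t,0))$ the Monge--Amp\`ere mass concentrates on the contact set, which splits into $\{\psi_t=\f+t<0\}$ and $\{\psi_t=0\}$. On the first piece Lemma \ref{lem:Demkey} indeed gives the bound $c\,\mu(\f<-t)$, but on the second piece all you get is $1_{\{\psi_t=0\}}(\omega+dd^c\psi_t)^n\le 1_{\{\psi_t=0\}}\omega^n$, so the true remainder is $\mathcal R(t)=\omega^n(\{\psi_t=0\})\le\omega^n(\{\f\ge-t\})$. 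This term has nothing to do with $d\omega\neq0$ (the same obstruction is present when $\omega$ is K\"ahler), and crucially it \emph{increases} with $t$, tending to $\int_X\omega^n\ge v_-(\omega)$ as $t\to-\inf_X\f$. Your scalar inequality $v_-(\omega)\le c A_m(\mu)^m t^{-m}+\mathcal R(t)$ is therefore vacuous for large $t$ and forces no bound on $t$ at all. Relatedly, your ``main obstacle'' is misidentified: the envelope method of \cite{GL21a} involves no integration by parts, so no torsion terms arise and \cite{GL21b} is not used to absorb any hermitian error; it enters only through the single quantity $v_-(\omega)$.

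What actually makes the argument close (and what the paper does) is to replace the level-by-level truncation $\min(\f+t,0)$ by a single weighted envelope $u=P_\omega(\chi\circ\f)$ for a concave increasing $\chi$ with $\chi(0)=0$, $\chi'(0)=1$. Lemma \ref{lem:GLkey} then bounds the \emph{entire} mass $(\omega+dd^c u)^n$ by $1_{\{u=\chi\circ\f\}}(\chi'\circ\f)^n\,c\,\mu$, with no residual $\omega^n$-term. One then (i) controls $\int(-u)^{\e}(\omega+dd^cu)^n$ via H\"older and $A_m(\mu)$, (ii) uses $\int_X(\omega+dd^cu)^n\ge v_-(\omega)$ to bound $\sup_Xu$ and hence $\|\chi\circ\f\|_{L^m(\mu)}$, giving the improved Chebyshev estimate $\mu(\f<-t)\le \tilde A|\chi(-t)|^{-m}$, and (iii) chooses $\chi$ adaptively so that $\int(\chi'\circ\f)^{n+2\e}d\mu\le2$, which converts the Chebyshev bound into a differential inequality for $h=-\chi(-\,\cdot\,)$ whose integration bounds $T_{\max}$. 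If you want to keep a De Giorgi-type iteration on $t\mapsto\mu(\f<-t)$ instead, you would still need a substitute for the weighted envelope step; the plain sublevel-set envelope cannot supply it.
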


This result covers the case when $\mu=fdV_X$ is absolutely continuous with respect to Lebesgue measure,
with density $f$ belonging to $L^p$, $p>1$, or to an appropriate Orlicz class $L^w$ (for some convex weight $w$ with "fast growth" at infinity), 
thus partially extending the case of hermitian forms treated by Dinew-Ko{\l}odziej \cite{DK12} 
and Ko{\l}odziej-Nguyen \cite{KN15,KN20}.

We also provide a new and direct  alternative proof of this a priori estimate when
$\omega$ is hermitian, relying only on the local resolution
of the classical Dirichlet problem for the complex Monge-Amp\`ere equation,
and twisting the right hand side   with an exponential
(see   Theorem \ref{thm:globaluniformhermitian}).

\smallskip

As far as solutions to such equations are concerned,
we obtain the following:

\begin{thmB}
{\it 
Let $\omega$ be a semi-positive $(1,1)$ form which is either  big or such that $v_-(\omega)>0$.
Fix $0 \leq f \in L^p(dV_X)$, where $p>1$ and $\int_X fdV_X=1$.
Then  
\begin{itemize}
\item there exists a unique constant $c(\omega,f)>0$ 
and a bounded $\omega$-psh function $\f$ such that
$
(\omega+dd^c \f)^n=c(\omega,f) fdV_X;
$
\item for any $\lambda>0$ there exists a unique $\f_{\lambda} \in \PSH(X,\omega) \cap L^{\infty}(X)$
such that
$$
(\omega+dd^c \f_{\lambda})^n=e^{\lambda \f_{\lambda}}  fdV_X.
$$
\end{itemize}
}
\end{thmB}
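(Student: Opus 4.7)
The plan is to prove both statements via a single approximation scheme, using Theorem~A to supply the uniform $L^\infty$-bound along the regularization. Set $\omega_\varepsilon := \omega + \varepsilon\, \omega_X$ (positive hermitian) and pick smooth positive densities $f_\varepsilon$ with $\int_X f_\varepsilon\, dV_X = 1$ and $f_\varepsilon \to f$ in $L^p$. Since $f \in L^p$ with $p>1$, H\"older's inequality combined with the uniform $L^q$-compactness of normalized $\omega_\varepsilon$-plurisubharmonic functions (for $\varepsilon$ small) yields a uniform upper bound on $A_m(f_\varepsilon\, dV_X)$ for any $m>n$, and similarly for $A_m(f\, dV_X)$.

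For part (i), the hermitian Calabi--Yau theorem of Tosatti--Weinkove \cite{TW10} produces, for each $\varepsilon > 0$, a unique constant $c_\varepsilon > 0$ and a smooth $\varphi_\varepsilon \in \PSH(X, \omega_\varepsilon)$ with $\sup_X \varphi_\varepsilon = 0$ solving
\[
(\omega_\varepsilon + dd^c \varphi_\varepsilon)^n = c_\varepsilon\, f_\varepsilon\, dV_X.
\]
Integrating the equation and invoking the Monge--Amp\`ere volume bounds of \cite{GL21b}, one controls $c_\varepsilon$ from above by a uniform constant and from below by $v_-(\omega)/2 > 0$ (respectively by a positive constant in the big case). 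Theorem~A then delivers $\mathrm{Osc}_X(\varphi_\varepsilon) \le T$ independently of $\varepsilon$, whence $\|\varphi_\varepsilon\|_\infty \le T$. Hartogs' lemma extracts a subsequence with $\varphi_\varepsilon \to \varphi \in \PSH(X, \omega) \cap L^\infty(X)$ in $L^1$ and $c_\varepsilon \to c > 0$, and Bedford--Taylor continuity of the Monge--Amp\`ere operator along uniformly bounded sequences closes the equation $(\omega + dd^c \varphi)^n = c\, f\, dV_X$.

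For part (ii), the regularized equation
\[
(\omega_\varepsilon + dd^c \varphi_{\lambda, \varepsilon})^n = e^{\lambda \varphi_{\lambda, \varepsilon}}\, f_\varepsilon\, dV_X
\]
is solvable by the continuity method, since the strict monotonicity of $t \mapsto e^{\lambda t}$ trivializes the linearization's kernel. The key step is bounding $M_\varepsilon := \sup_X \varphi_{\lambda, \varepsilon}$; writing $\psi_{\lambda,\varepsilon} := \varphi_{\lambda, \varepsilon} - M_\varepsilon \le 0$ and integrating gives
\[
e^{\lambda M_\varepsilon} \int_X e^{\lambda \psi_{\lambda, \varepsilon}}\, f_\varepsilon\, dV_X = \int_X (\omega_\varepsilon + dd^c \varphi_{\lambda, \varepsilon})^n,
\]
whose right-hand side lies in $[v_-(\omega)/2,\, C]$ by the volume estimates. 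Since $\psi_{\lambda,\varepsilon} \le 0$ the integral on the left is at most $1$, while Jensen's inequality applied to the probability measure $f_\varepsilon\, dV_X$ combined with the uniform $L^1(f_\varepsilon\, dV_X)$-control of normalized $\omega_\varepsilon$-psh functions yields a lower bound $e^{-C_1 \lambda}$. Hence $M_\varepsilon$ is pinned in a fixed compact interval, the density $e^{\lambda \varphi_{\lambda,\varepsilon}} f_\varepsilon$ stays in $L^p$ with uniform norm, and Theorem~A completes the oscillation bound. Passing to the limit produces $\varphi_\lambda$; uniqueness follows from a hermitian comparison argument and the strict monotonicity of $e^{\lambda t}$, which forces $\{\varphi_\lambda > \tilde\varphi_\lambda\}$ to have zero $f\, dV_X$-measure for any two solutions.

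I expect the principal obstacle to be the uniqueness of the constant $c(\omega, f)$ in (i). In the K\"ahler case this is immediate from $\int_X (\omega + dd^c \varphi)^n = \int_X \omega^n$, but in the hermitian setting the total Monge--Amp\`ere mass depends on $\varphi$, and two distinct bounded solutions could a priori be associated with distinct constants. I would rule this out by a pointwise eigenvalue inequality at a maximum of $\varphi_1 - \varphi_2$, which yields $c_1 f(x_0) \le c_2 f(x_0)$ and contradicts $c_1 > c_2$ as soon as $f(x_0) > 0$, supplemented by a perturbation or comparison-principle integration to handle the possibility that this maximum is attained only where $f$ vanishes. This last technical point is where I expect most of the delicate analysis to concentrate.
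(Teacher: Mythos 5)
Your overall scheme (regularize by $\omega_\varepsilon=\omega+\varepsilon\omega_X$, solve via Tosatti--Weinkove/Nguyen, get uniform bounds from Theorem~A and a subsolution, pass to the limit) is the same as the paper's, but two steps you treat as routine are precisely where the hermitian difficulties concentrate, and as written they do not go through. First, the limit passage: there is no ``Bedford--Taylor continuity of the Monge--Amp\`ere operator along uniformly bounded sequences'' converging merely in $L^1$ --- continuity holds along monotone or locally uniform sequences, and $L^1$-convergence of uniformly bounded $\omega$-psh functions does not imply weak convergence of the Monge--Amp\`ere measures. The paper circumvents this by forming the decreasing upper envelopes $\Phi_j^+=(\sup_{\ell\geq j}\varphi_\ell)^*$ and the increasing lower envelopes $\Phi_j^-=P_\omega(\inf_{\ell\geq j}\varphi_\ell)$, using Lemma \ref{lem:envmin} to propagate the inequalities $(\omega+dd^c\Phi_j^+)^n\geq c_j^- f_j^- dV_X$ and $(\omega+dd^c\Phi_j^-)^n\leq e^{\Phi_j^--\inf_\ell\varphi_\ell}c_j^+ f dV_X$ through the monotone limits, and then invoking the domination principle to identify the two limits. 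Without some version of this device your equation is only established as a one-sided inequality.

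Second, uniqueness. Your pointwise eigenvalue argument at a maximum of $\varphi_1-\varphi_2$ presupposes $C^2$ solutions, whereas the solutions here are only bounded; and the ``perturbation or comparison-principle integration'' you defer to is exactly the issue, since by Chiose's result the Bedford--Taylor comparison principle fails on general hermitian manifolds unless $dd^c\omega=dd^c\omega^2=0$. The paper resolves both the uniqueness of $c(\omega,f)$ and the uniqueness in part (ii) by the domination principle (Corollaries \ref{cor:unique} and \ref{cor:unique2}), which rests on the modified comparison principle of Theorem \ref{thm:pcpr} and the envelope construction of Proposition \ref{prop: domination principle unbounded}: if $(\omega+dd^c u)^n\leq\tau(\omega+dd^c v)^n$ with $\tau<1$ then $u\geq v+C$ for every $C$, a contradiction, and this needs no smoothness and no nondegeneracy of $f$. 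A smaller but real omission: in the ``big'' case the lower bound on $c_\varepsilon$ does not come from $v_-(\omega)$ (which may not be known positive there); it comes from the explicit bounded subsolution of Lemma \ref{lem: subsol big}, built from $\rho$ with analytic singularities, and that construction should appear explicitly in your argument.
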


By analogy with the K\"ahler setting, we say here that  $\omega$ is
{\it big} if   there exists  an $\omega$-psh function with analytic singularities $\rho$
such that $\omega+dd^c \rho \geq \delta \omega_X$  for some $\delta>0$.
A celebrated result of Demailly-P\u{a}un \cite[Theorem 0.5]{DP04} 
      ensures, when $X$ is K\"ahler,
      that the existence of $\rho$ is a consequence of the condition $v_-(\omega)>0$.
    This result has been partially extended to the hermitian setting in \cite{GL21b}.

A slight refinement of our technique allows one to establish important stability estimates
(see Theorems  \ref{thm:stabilityhermitian} and  \ref{thm: stab sem pos}).
Similar results   have been obtained by quite different methods over the last
decade (see \cite{Blo11,DK12,Szek18,KN19,LPT20}).

\smallskip

There are several  geometric situations when 
one can not expect the Monge-Amp\`ere
potential $\f$ to be globally bounded. 
These corresponds to probability measures $\mu=fdV_X$
whose density $f$ belongs to an
Orlicz class $L^w$, for some 
convex weight $w$ with slow growth (see section \ref{sec:relative1}).
Our next main result provides
the following a priori estimate,
which extends to the hermitian setting
a result proved by  DiNezza-Lu \cite{DnL17}
in the context of quasi-projective varieties:

\begin{thmC} \label{thmC}
{\it 
Let $\omega$ be a semi-positive $(1,1)$ form which is big and such that $v_-(\omega)>0$.
Let  $\mu=fdV_X$ be a probability measure, where
$0 \leq f=g e^{-A\p} \in L^{w}$ with
$g \in L^p(dV_X)$ for some $p>1$, $A>0$ and  $\p \in \PSH(X, \omega)$.
Assume $\f$ is a bounded $\omega$-psh function
such that    $(\omega+dd^c \f)^n=c f dV_X$ and $\sup_X \f=0$. Then  
$$
 \alpha \p-\beta \leq \f \leq 0
$$
for any $0 < \alpha \leq 1$, where 
$\beta>0$ is a uniform constant that depends on $p$, the weight $w$, and upper bounds for $||g||_{L^p},\frac{A}{\alpha}, \frac{c}{v_-(\omega)}$, and the Luxembourg norm $\|f\|_w$.
}
\end{thmC}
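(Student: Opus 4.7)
The upper bound $\f \leq 0$ is immediate from $\sup_X \f = 0$. For the lower bound, the strategy is to interpret $\tilde\f := \f - \al\p$ as a bounded quasi-psh potential relative to the auxiliary $(1,1)$-current $\omega_\al := \omega + \al dd^c\p \geq (1-\al)\omega$, and to exploit the identity
\[
(\omega_\al + dd^c \tilde\f)^n \;=\; (\omega + dd^c\f)^n \;=\; c f\, dV_X,
\]
so that a uniform lower bound on $\tilde\f$ reduces to the absolute a priori estimate of Theorem A applied in the new setting.

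To make this rigorous, I would normalize $\sup_X \p = 0$ and use Demailly regularization (available since $\omega$ is big) to produce a decreasing sequence $\p_k \searrow \p$ of quasi-psh functions with $\omega + dd^c \p_k \geq -\e_k \omega_X$ and $\e_k \searrow 0$. Set $\omega_{\al,k} := \omega + \al dd^c \p_k$; using $\al \leq 1$ together with a uniform positive reserve coming from the bigness of $\omega$, for $k$ large this is a semi-positive smooth $(1,1)$-form, and the results of \cite{GL21b} yield $v_-(\omega_{\al,k}) \geq v_-(\omega) > 0$ uniformly in $k$ and $\al$. The function $\tilde\f_k := \f - \al\p_k$ is then bounded and $\omega_{\al,k}$-psh, and solves $(\omega_{\al,k} + dd^c \tilde\f_k)^n = c f\, dV_X$.

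The core step is to verify the moment hypothesis of Theorem A uniformly in $k$ and $\al$. Given $\chi \in \PSH(X, \omega_{\al,k})$ with $\sup_X \chi = 0$, the function $v := \chi + \al \p_k$ belongs to $\PSH(X,\omega)$ with $\sup_X v = o(1)$ as $k \to \infty$, and satisfies $-\chi \leq -v + O(1)$ pointwise. Combining the Orlicz integrability $f \in L^w$, the $L^p$ bound on $g$, and the factorization $f = g e^{-A\p}$, one obtains, for a well-chosen $m > n$ depending on the growth of the weight $w$, the uniform estimate
\[
\sup_\chi \int_X (-\chi)^m f\, dV_X \;\leq\; C,
\]
where the supremum runs over $\chi \in \PSH(X,\omega_{\al,k})$ with $\sup_X \chi = 0$, and $C$ depends only on $p, w, \|g\|_{L^p}, \|f\|_w$ and $A/\al$. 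Theorem A then yields ${\rm Osc}_X(\tilde\f_k) \leq T$ with $T$ depending only on the listed data. Choosing $x_k \in X$ with $\f(x_k) \to 0$ and using $\sup_X \p_k \to 0$ gives $\sup_X \tilde\f_k \geq -o(1)$, whence $\inf_X \tilde\f_k \geq -T - o(1)$, so $\f - \al \p_k \geq -T - o(1)$ pointwise; letting $k \to \infty$ one concludes $\f \geq \al \p - T$, and $\beta := T$ has the required dependence.

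I expect the main obstacle to be the moment estimate above, namely the careful tracking of uniform constants as $\al \to 0$ and the precise dependence on $A/\al$ extracted from the Orlicz structure of $f$. This is exactly where the exponential factorization $f = g e^{-A\p}$ is essential: it allows one to trade the singular part of $f$ against a tilt of the potential by $\al\p$, which is the mechanism responsible for the relative nature of the estimate.
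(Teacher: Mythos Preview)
Your plan has a genuine gap at precisely the point you flagged as the main obstacle, and it is not a matter of tracking constants: the moment hypothesis $A_m(f\,dV_X)<\infty$ of Theorem~A simply fails in general for the measure $\mu=f\,dV_X$. The whole purpose of the \emph{relative} estimate is to treat densities $f=ge^{-A\psi}$ that lie in a slow-growth Orlicz class $L^w$ but in \emph{no} $L^p$ with $p>1$; in that regime one cannot expect $\PSH(X,\omega)\subset L^m(\mu)$ for any $m>n$. Concretely, if $\psi$ has a Lelong number $\nu$ at $x_0$ with $A\nu$ close to $n$, then $e^{-Aq\psi}\notin L^1_{\rm loc}$ for $q=p/(p-1)$, and the H\"older splitting you would need to bound $\int_X(-\chi)^m g e^{-A\psi}dV_X$ diverges. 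The tilt $\chi=v-\alpha\psi_k$ does not help: since $\psi_k$ is smooth and bounded, $(-\chi)^m$ and $(-v)^m$ differ by a bounded factor and you are back to bounding $\int_X(-v)^m f\,dV_X$ uniformly over $v\in\PSH(X,\omega)$, which is exactly the unavailable input. (There is a secondary issue: $\omega_{\alpha,k}=\omega+\alpha\,dd^c\psi_k\ge(1-\alpha)\omega-\alpha\varepsilon_k\omega_X$ is \emph{not} semi-positive where $\omega$ degenerates, so Theorem~A does not even apply as stated; bigness gives positivity only after adding $dd^c\rho$, not pointwise.)

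The paper's proof avoids this by a two-step mechanism that is absent from your plan. First, using only $f\in L^w$ and an envelope construction (Lemma~\ref{lem:GLkey}), it shows the \emph{uniform exponential integrability} $\int_X e^{-\lambda\varphi}dV_X\le M_\lambda$ for every $\lambda>0$. Second, it sets $u:=P_\omega(2\varphi-\psi)$ and observes that on the contact set $\{u=2\varphi-\psi\}$ one has $e^{-A\psi}=e^{Au}e^{-2A\varphi}$; since $(\omega+dd^cu)^n$ is concentrated there, the dangerous factor $e^{-A\psi}$ is traded for $e^{-2A\varphi}$, which by the first step lies in every $L^r$. Hence $(\omega+dd^cu)^n\le C\,g'\,dV_X$ with $g':=ge^{-2A\varphi}\in L^{p'}$ for some $p'>1$, and \emph{now} Theorem~A applies to $u$ (with the original semi-positive form $\omega$), yielding a uniform bound on $u$ and thus $2\varphi\ge u+\psi\ge\psi-\beta$. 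The point is that the envelope converts the problem to one with an $L^{p'}$ density before invoking the absolute estimate; your scheme tries to invoke the absolute estimate with the original singular density, which cannot work.
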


The proof of DiNezza-Lu 
  uses generalized Monge-Amp\`ere capacities, hence relies on Bedford-Taylor's comparison principle. 
It has been  shown by Chiose \cite{Chi16} that this comparison principle 
  holds only under the restrictive condition $dd^c \omega = dd^c \omega^2 =0$.  
Our approach is completely different: we use a quasi-psh envelope construction to replace $f$ by $e^{-2A \f}$ 
(a similar idea has  been recently used in \cite{LN20,DDL5})
and then use Theorem A, whose proof also heavily relies 
on quasi-psh envelopes.

\smallskip


We then move on to show the existence of   solutions
to such degenerate complex Monge-Amp\`ere equations.
In this context we prove the following :

\begin{thmD}
{\it 
Let $\omega$ be a semi-positive $(1,1)$ form which is big and such that $v_-(\omega)>0$.
Fix $\rho \in \PSH(X,\omega)$ with analytic singularities along
a divisor $E$, such that $\omega+dd^c \rho$ dominates a hermitian form.
Let  $\mu=fdV_X$ be a probability measure, where
 $0 \leq f$ is smooth and positive in $X \setminus D$, and
   $f=e^{\p^+-\p^-}$ for some   quasi-plurisubharmonic functions $\p^{\pm}$.

Then there exist   $c>0$ 
 and $\f \in \PSH(X,\omega)$ such that   
\begin{itemize}
\item $\f$ is smooth in the open set $X \setminus (D \cup E)$;
\item $\alpha (\p^-+\rho) -\beta(\alpha) \leq \f $ in $X$ and $\sup_X \f=0$;
\item $(\omega+dd^c \f)^n=c fdV_X$ in $X \setminus (D \cup E)$;
\end{itemize} 
where $0<\alpha \leq 1$ is arbitrarily small, and $\beta(\alpha)$ is a uniform constant.
}
\end{thmD}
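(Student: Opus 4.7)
The strategy is to realize $\varphi$ as the $L^1$-limit of bounded smooth solutions $\varphi_j$ of regularized problems produced by Theorem~B, control them uniformly through an adaptation of Theorem~C, and upgrade the limit to $C^{\infty}(X \setminus (D \cup E))$ via local higher-order estimates for non-degenerate complex Monge-Amp\`ere equations on hermitian manifolds.

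First, I smooth the density. By Demailly's regularization, pick smooth decreasing sequences $\psi^{\pm}_j \searrow \psi^{\pm}$ with uniform quasi-psh constants, and set
$$f_j := \frac{e^{\psi^+_j - \psi^-_j}}{c'_j}, \qquad c'_j := \int_X e^{\psi^+_j - \psi^-_j}\, dV_X,$$
so that $f_j \in C^{\infty}(X)$ is positive, $\int_X f_j\, dV_X = 1$, and $c'_j$ stays in a compact interval of $(0,\infty)$ by standard convergence arguments. Theorem~B applied to $(\omega, f_j)$ produces $c_j > 0$ and a bounded $\omega$-psh function $\varphi_j$ with $\sup_X \varphi_j = 0$ solving $(\omega + dd^c\varphi_j)^n = c_j f_j dV_X$.

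The critical step is to establish the uniform a priori estimate $\alpha(\psi^-_j + \rho) - \beta(\alpha) \leq \varphi_j \leq 0$ for $0 < \alpha \leq 1$. Introducing the auxiliary $\omega$-psh function $\Psi_j := \psi^-_j/M + \rho$, with $M := C/\delta$ chosen so that the bigness condition $\omega + dd^c\rho \geq \delta\omega_X$ absorbs the quasi-psh constant $C$ of $\psi^-_j$, gives $\Psi_j \in \PSH(X,\omega)$ uniformly in $j$. Writing $f_j = g_j e^{-M\Psi_j}$ with $g_j := e^{\psi^+_j + M\rho}/c'_j$ uniformly bounded in $L^{\infty}$ places us in the hypotheses of Theorem~C; Skoda integrability of $e^{-\varepsilon\psi^-}$ for small $\varepsilon>0$ controls the Luxembourg norm of $f_j$ uniformly in a suitable Orlicz class, while hermitian mass estimates bound $c_j/v_-(\omega)$. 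A direct application of Theorem~C would only yield the unmatched bound $(\alpha/M)\psi^-_j + \alpha\rho - \beta \leq \varphi_j$; to force the same coefficient in front of $\psi^-_j$ and $\rho$ I would revisit the envelope construction underlying the proof of Theorem~C, applied to the envelope of $\omega$-psh functions dominated by $\psi^-_j + \rho + O(1)$, and use bigness of $\omega$ together with the uniform Luxembourg/Skoda controls to bound this envelope uniformly; converting the upper bound on the envelope into the desired lower bound on $\varphi_j$ gives the claim.

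Finally I pass to the limit. Precompactness of the normalized family $\{\varphi_j\}$ in $L^1(X)$ extracts a subsequence $\varphi_j \to \varphi \in \PSH(X,\omega)$ with $\sup_X \varphi = 0$ and with the lower bound passing to the limit; extracting further, $c_j \to c > 0$ by hermitian Monge-Amp\`ere mass comparisons. Bedford-Taylor continuity of Monge-Amp\`ere operators along locally uniformly convergent bounded sequences of $\omega$-psh functions then yields $(\omega + dd^c\varphi)^n = c f dV_X$ on $X \setminus (D \cup E)$. On any $K \Subset X \setminus (D \cup E)$, $f_j$ is smooth and uniformly positive, $\omega + dd^c\rho$ dominates a hermitian form, and $\varphi_j$ is uniformly bounded in $L^{\infty}(K)$ by the lower bound; Tosatti-Weinkove's interior $C^{2,\alpha}$ estimates together with Schauder bootstrap provide uniform $C^k$ bounds on $K$ for every $k$, whence $\varphi \in C^{\infty}(X \setminus (D \cup E))$ by a diagonal Arzel\`a-Ascoli argument. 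The principal obstacle throughout is the calibration step: matching the coefficients of $\psi^-_j$ and $\rho$ in the a priori lower bound, and showing $\beta(\alpha)$ is uniform in $j$, requires opening up the proof of Theorem~C rather than invoking it as a black box, and exploits bigness of $\omega$ through $\delta$ to let $\rho$ absorb the negativity of $dd^c\psi^-_j$ while attaching the correct weight to $\rho$ itself.
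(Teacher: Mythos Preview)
Your overall architecture (regularize, get uniform $C^0$ bounds via Theorem~C, pass to the limit, then upgrade regularity) matches the paper's, but the regularity step contains a genuine gap.

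The solutions $\varphi_j$ you produce from Theorem~B are only \emph{bounded} $\omega$-psh functions; nothing in Theorem~B gives smoothness, and the reference form $\omega$ is merely semi-positive, so the equation $(\omega+dd^c\varphi_j)^n=c_jf_jdV_X$ is degenerate even on compact $K\Subset X\setminus(D\cup E)$. There is no ``Tosatti--Weinkove interior $C^{2,\alpha}$ estimate'' that upgrades a bounded weak solution of a degenerate complex Monge--Amp\`ere equation to $C^{2,\alpha}$: the second-order a priori estimate is proved by applying a Laplacian and evaluating at a maximum, which requires the solution to be smooth to begin with. The paper avoids this by perturbing the form, setting $\omega_\varepsilon=\omega+\varepsilon\omega_X$ (genuinely hermitian) and solving $(\omega_\varepsilon+dd^c\varphi_\varepsilon)^n=c_\varepsilon e^{\psi^+_\varepsilon-\psi^-_\varepsilon}dV_X$ via \cite{TW10}, which yields \emph{smooth} $\varphi_\varepsilon$. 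It then carries out in detail a Laplacian estimate for the auxiliary function $H=\log\mathrm{Tr}_{\omega_X}(\omega_\varepsilon+dd^c\varphi_\varepsilon)-\gamma(\varphi_\varepsilon-\rho-2\delta\psi^-_\varepsilon+\beta+1)$; the point of the barrier $\rho$ is to force the maximum of $H$ into $X\setminus E$, and the choice of the concave weight $\gamma$ absorbs the unbounded term $\mathrm{Tr}_{\omega_X}(\omega_X+dd^c\psi^-_\varepsilon)$. This is the substantive analytic content of the theorem and cannot be replaced by a black-box citation.

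Two smaller points. First, your ``calibration'' worry is a red herring: taking $\psi=\delta\psi^-_j+\rho\in\PSH(X,\omega)$ and $g=Ce^{\rho/\delta}\in L^\infty$, Theorem~C directly gives $\alpha(\delta\psi^-_j+\rho)-\beta\leq\varphi_j$, which already implies the stated bound after rescaling $\alpha$; no reopening of the proof is needed. Second, you invoke Bedford--Taylor continuity along ``locally uniformly convergent'' sequences, but you have only extracted $L^1$ convergence; the paper handles the passage to the limit with the envelope sandwich $\Phi_j^{\pm}$ (as in the proof of Theorem~\ref{thm:bdd}), not by assuming uniform convergence.
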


\smallskip

This result can be seen as a  generalization of the main result of \cite{TW10}.
It encompasses the case of smooth Monge-Amp\`ere equations on
mildly singular compact hermitian varieties, as well as more degenerate
settings, hermitian analogues of the main   results
of \cite[Theorems 1 and 3]{DnL17}.
It is obtained as a combination of  
Theorems \ref{thm:bdd} and   \ref{thm:higher}.
When moreover $f \in L^p(dV_X)$, $p>1$, then
$\f$ is globally bounded (one can take $\alpha=0$)
and it suffices to assume that $\omega$ is big (see Theorem \ref{thm:higher0}).

\smallskip

We finally apply our results to solve a singular version
of the hermitian Calabi-Yau theorem.
We work over a compact complex variety $V$ which has
{\it log terminal singularities} (see Section \ref{sec:geom} for a precise definition).
If the first Bott-Chern class $c_1^{BC}(V)$ vanishes, one says
that $V$ is a $\Q$-Calabi-Yau variety.
In that context we construct many 
Ricci-flat hermitian metrics:

\begin{thmE}
{\it 
Let $V$ be a 
$\Q$-Calabi-Yau variety
and $\omega_V$  a hermitian form.
There exists
a function $\f \in \PSH(V,\omega_V) \cap L^{\infty}(V)$ 
  such that
\begin{itemize}
\item $\f$ is  smooth in $V_{reg}$;
\item $\omega_V+dd^c \f$ is a hermitian form  and ${\rm Ric}(\omega_V+dd^c \f)=0$ in $V_{reg}$.
\end{itemize}
}
\end{thmE}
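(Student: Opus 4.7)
The plan is to lift the problem to a smooth resolution $\pi: X \to V$, where the $L^p$ specialization of Theorem D delivers a bounded $\omega$-psh solution, then descend the solution across the pluripolar set $V_{sing}$ back to $V$ and verify Ricci-flatness in local coordinates from the shape of the equation.

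First I fix a log resolution $\pi: X \to V$ with simple normal crossings exceptional divisor $E = \sum E_i$, so that $\pi$ restricts to a biholomorphism $X \setminus E \to V_{reg}$. The pullback $\omega := \pi^{\ast}\omega_V$ is smooth, semi-positive on $X$, and strictly positive on $X \setminus E$. A standard construction using the defining sections $s_i$ of the components of $E$ produces an $\omega$-psh function $\rho$ with analytic singularities along $E$ satisfying $\omega + dd^c \rho \geq \delta \omega_X$, so $\omega$ is big; the positivity $v_-(\omega) > 0$ is transported from $\omega_V$ via \cite{GL21b}. The $\Q$-Calabi-Yau hypothesis gives a nowhere-vanishing section $\sigma$ of $m K_V$ for some $m \geq 1$, hence a smooth volume form $\mu_V = c_n (\sigma \wedge \bar\sigma)^{1/m}$ on $V_{reg}$ of finite total mass. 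The pullback reads $\pi^{\ast}\mu_V = f \, dV_X$ with $f = \prod_i |s_i|^{2 a_i}$, where $a_i > -1$ are the discrepancies; log terminality forces $f \in L^p(dV_X)$ for some $p > 1$, and writing $\psi^{+}$ (resp.\ $\psi^{-}$) as the sum of $2a_i \log|s_i|$ over those $i$ with $a_i>0$ (resp.\ $a_i<0$) shows $f = e^{\psi^+ - \psi^-}$ with $\psi^{\pm}$ quasi-psh on $X$.

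The $L^p$ case of Theorem D (together with the remark following that theorem) then yields a constant $c > 0$ and a bounded $\omega$-psh function $\varphi$ on $X$ with $\sup_X \varphi = 0$ and $(\omega + dd^c \varphi)^n = c f \, dV_X$, smooth on $X \setminus E$ by the local hermitian Calabi-Yau theory of Tosatti-Weinkove applied near points where $\omega$ and $f$ are smooth and positive. Positivity of the right-hand side forces $\omega + dd^c \varphi$ to be a hermitian form on $X \setminus E$. Pushing forward via $\pi$, the function $\varphi_V := \varphi \circ \pi^{-1}$ is bounded and $\omega_V$-psh on $V_{reg}$; since $V_{sing}$ is locally pluripolar in $V$, this extends uniquely to $\varphi_V \in \PSH(V, \omega_V) \cap L^\infty(V)$, with the smoothness on $V_{reg}$ and the hermitian nature of $\omega_V + dd^c \varphi_V$ inherited from $X \setminus E$.

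For Ricci-flatness I work in a local chart of $V_{reg}$ and write $\sigma = (h \, dz_1 \wedge \cdots \wedge dz_n)^{\otimes m}$ with $h$ nowhere-vanishing holomorphic, so that $\mu_V = |h|^{2/m}$ times a smooth positive volume form in that chart. The equation becomes $\det(\omega_V + dd^c \varphi_V) = c' |h|^{2/m}$ up to a smooth positive factor, and applying $-dd^c \log$ gives $\Ric(\omega_V + dd^c \varphi_V) = -\tfrac{2}{m} dd^c \log|h| = 0$ since $\log|h|$ is pluriharmonic. The substantial PDE content is absorbed into Theorem D; the main obstacles I expect to negotiate are the resolution-level statements $v_-(\pi^{\ast}\omega_V) > 0$ and bigness of $\pi^{\ast}\omega_V$ -- already handled in \cite{GL21b} once the log terminal structure is invoked -- together with the $\omega_V$-psh extension across $V_{sing}$, which is a standard consequence of the boundedness of $\varphi$ and the pluripolarity of the singular locus.
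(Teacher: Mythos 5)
Your proposal is correct and follows essentially the same route as the paper's proof of Theorem \ref{thm:calabi}: pass to a log resolution, identify $\pi^*\mu$ with $e^{\p^+-\p^-}dV_X$ where $f=\prod_i|s_i|^{2a_i}\in L^p$ by log terminality, apply the $L^p$ case of Theorem D (Theorems \ref{thm: bounded sol semipositive} and \ref{thm:higher0}), and push the bounded solution back down to $V$. One small correction: your claim that $v_-(\pi^*\omega_V)>0$ is ``transported from $\omega_V$ via \cite{GL21b}'' is not actually established there (it is tied to the open Demailly--P\u{a}un-type conjecture), but it is also unnecessary, since bigness of $\pi^*\omega_V$ alone suffices for Theorem \ref{thm:higher0} when the density is in $L^p$, $p>1$.
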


 We actually prove a singular hermitian analogue of Yau's celebrated solution to the Calabi conjecture,
 see Theorem \ref{thm:calabi}.
 We expect many further geometric implications of the present work, but
 leave this for a future project.

 \medskip

\noindent {\it Comparison with other works.}
Yau's proof of his famous $L^{\infty}$-a priori estimate \cite{Yau78} goes through a delicate Moser iteration process.  
A PDE proof of the $L^{\infty}$-estimate has been 
recently provided by 
Guo-Phong-Tong \cite{GPT21} using an auxiliary 
Monge-Amp\`ere equation, inspired by the recent breakthrough
by Chen-Cheng on constant scalar curvature metrics \cite{CC21}.
An important generalization of Yau's estimate has been provided by Ko{\l}odziej \cite{Kol98} using pluripotential techniques.
This technique has been further generalized in \cite{EGZ09,EGZ08,DP10,BEGZ10}
in order to deal with less positive or collapsing  classes.
All these works require the underlying manifold to be K\"ahler.

B{\l}ocki has provided a different approach  \cite{Blo05b,Blo11} which is based on the Alexandroff-Bakelman-Pucci maximum principle
and a stability estimate due to Cheng-Yau ($L^2$-case) and Ko{\l}odziej ($L^p$-case).
B{\l}ocki's method works in the hermitian case 
and has been  generalized
to various settings by Sz\'ek\'elehydi, Tosatti and Weinkove   \cite{STW17, Szek18,TW18}. 
It requires the reference 
form to be strictly positive, 
but applies to a large family of equations
(see also \cite{TW15} for a slightly weaker $L^{\infty}$ estimate based on
 Moser iteration process).

The first steps of pluripotential theory have been developed in the hermitian setting by Dinew, Ko{\l}odziej and Nguyen
\cite{DK12,KN15,Din16, KN19}.
 The presence of torsion requires
the reference form to be positive in order to control error terms in  delicate integration by parts.

Our approach consists in showing that the volume of the sublevel sets $(\f<-t)$ goes down to zero in finite time
by directly measuring their $\mu$-size. 
It relies on weak compactness  of normalized $\omega$-plurisubharmonic functions
and basic properties of quasi-psh envelopes, allowing us to deal
with semi-positive forms.


\begin{ackn} 
We thank D.Angella and {V.Tosatti}
for  useful discussions, as well as  T.D.T\^o and C.-M.Pan for a careful reading of a first draft.
This work has benefited from State aid managed by the ANR under the "PIA" program bearing the reference ANR-11-LABX-0040
(research project HERMETIC).
 The authors are also partially supported by the ANR project PARAPLUI.
\end{ackn}


\section{Preliminaries}

In the whole article we let $X$ denote a compact complex manifold of complex dimension $n \geq 1$,
equipped with a hermitian form $\omega_X$,
$dV_X$ a smooth probability measure, and
$\omega$ a smooth semi-positive $(1,1)$-form on $X$ such that $\int_X \omega^n>0$.

\subsection{Positivity properties and envelopes}

 \subsubsection{Monge-Amp\`ere measure}

  A function is quasi-plurisub\-harmonic 
  if it is locally given as the sum of  a smooth and a psh function.


\begin{defi}
 Quasi-plurisub\-harmonic functions
$\f:X \rightarrow \R \cup \{-\infty\}$ satisfying
$
\omega_{\f}:=\omega+dd^c \f \geq 0
$
in the weak sense of currents are called $\omega$-psh functions. 

We let $\PSH(X,\omega)$ denote the set of all $\omega$-plurisubharmonic ($\omega$-psh)
functions which are not identically $-\infty$.  
\end{defi}

We refer the reader to \cite{Dem12,GZbook,Din16} for basic
properties of $\omega$-psh functions.
Recall that:
\begin{itemize}
\item The set $\PSH(X,\omega)$ is a closed subset of $L^1(X)$, 
for the $L^1$-topology.
\item $\PSH(X,\omega) \subset L^r(X)$ for $r \geq 1$;
 the induced $L^r$-topologies are equivalent;
\item the subset $\PSH_A(X,\omega):=\{ u \in \PSH(X,\omega) \; : \; -A \leq \sup_X u \leq 0 \}$ is compact in
$L^r(X)$ for any $r \geq 1$ and all $A \geq 0$.
\end{itemize}

The complex Monge-Amp\`ere measure 
$
 (\omega+dd^c u)^n=\omega_u^n
$
 is well-defined for any
$\omega$-psh function $u$ which is {\it bounded}, as follows from Bedford-Taylor theory. 

\smallskip

The mixed Monge-Amp\`ere measures 
$(\omega+dd^c u)^j \wedge (\omega+dd^c v)^{n-j}$ are also
well defined for any $0 \leq j \leq n$, and any bounded $\omega$-psh functions $u,v$.
We recall the following classical inequality
(see \cite[Lemma 1.2]{GL21a}):

\begin{lem} \label{lem:Demkey}
Let $\f,\p$ be bounded $\omega$-psh functions such that $\f \leq \p$, then
$$
 1_{\{\p = \f \}} (\omega+dd^c \f)^j \wedge (\omega+dd^c \p)^{n-j} \leq 1_{\{\p = \f \}} (\omega+dd^c \p)^n,
$$
for all $1 \leq j \leq n$. 
\end{lem}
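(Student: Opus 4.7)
The idea behind the lemma is that on the coincidence set $\{\f = \p\}$, the non-negative function $\p - \f$ attains its minimum value $0$, so informally $\omega_\p - \omega_\f = dd^c(\p - \f) \geq 0$ there. Combined with the telescoping identity
$$\omega_\p^n - \omega_\f^j \wedge \omega_\p^{n-j} = (\omega_\p - \omega_\f) \wedge \omega_\p^{n-j} \wedge \sum_{i=0}^{j-1} \omega_\p^{j-1-i} \wedge \omega_\f^{i},$$
which realizes the difference as a wedge of non-negative forms whenever $0 \leq \omega_\f \leq \omega_\p$ at the relevant point, the inequality is immediate in the smooth category.

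My plan is to make this heuristic rigorous as follows. Since the statement is local and measure-theoretic, I would first pass to a coordinate chart where $\omega = dd^c \rho$ for a smooth local potential, and replace $\f, \p$ by their local psh potentials $u = \rho + \f$, $v = \rho + \p$. The claim reduces to showing
$$1_{\{u = v\}} (dd^c u)^j \wedge (dd^c v)^{n-j} \leq 1_{\{u = v\}} (dd^c v)^n$$
for bounded psh $u \leq v$. If $u, v$ are furthermore $C^2$, then at each contact point $x_0$ the Hessian of the non-negative function $v - u$ is positive semi-definite, so $dd^c u(x_0) \leq dd^c v(x_0)$ as non-negative Hermitian $(1,1)$-forms; the identity above then gives the pointwise comparison on $\{u = v\}$.

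For general bounded $u, v$, I would approximate using the standard max-type regularization $u_\varepsilon := \max(u, v - \varepsilon)$, which is bounded psh with $u \leq u_\varepsilon \leq v$, coincides with $u$ wherever $u \geq v - \varepsilon$ (in particular on the contact set), and increases monotonically to $v$ as $\varepsilon \downarrow 0$. Bedford--Taylor's monotone continuity of Monge--Amp\`ere then gives $(dd^c u_\varepsilon)^j \wedge (dd^c v)^{n-j} \to (dd^c v)^n$ weakly. Moreover, on the open set $\{u > v - \varepsilon\}$ one has $u_\varepsilon \equiv u$ locally, so by the locality of Monge--Amp\`ere $(dd^c u_\varepsilon)^j \wedge (dd^c v)^{n-j} = (dd^c u)^j \wedge (dd^c v)^{n-j}$ there. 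Combining the identity on these open sets at each $\varepsilon > 0$ with the weak limit $\varepsilon \to 0$ yields the desired inequality on the contact set.

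The main obstacle is this last step: the contact set $\{u = v\}$ is typically neither open nor of positive Lebesgue measure, and bounded psh functions are only upper semi-continuous, so the openness of $\{u > v - \varepsilon\}$ is not automatic. To bridge the smooth pointwise argument with the measure inequality, one invokes Bedford--Taylor quasi-continuity (which places $u, v$ on continuous representatives off sets of arbitrarily small Monge--Amp\`ere capacity) together with the fact that bounded mixed Monge--Amp\`ere measures do not charge pluripolar sets, so the contact set is effectively captured by the open sets $\{u > v - \varepsilon\}$ up to measure-zero corrections.
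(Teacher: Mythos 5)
Your strategy is the standard one and coincides with the proof the paper is implicitly relying on: the lemma is not proved here but quoted from \cite[Lemma 1.2]{GL21a}, where the argument is exactly the max-regularization $\max(\f,\p-\e)$, locality of the Bedford--Taylor operators with respect to the plurifine topology (i.e.\ quasi-continuity plus the fact that these measures put no mass on sets of small capacity), and continuity of mixed Monge--Amp\`ere measures along the uniform convergence $\max(\f,\p-\e)\uparrow\p$. Your closing chain of inequalities is slightly under-specified --- the clean way to finish is $1_{\{\f=\p\}}\omega_\f^j\wedge\omega_\p^{n-j}\le 1_{\{\f>\p-\e\}}\omega_{\f_\e}^j\wedge\omega_\p^{n-j}\le \omega_{\f_\e}^j\wedge\omega_\p^{n-j}$, then pass to the weak limit against the fixed minorant and restrict to the contact set --- but the ideas are all present.

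The one step that is actually wrong is the very first reduction: you cannot ``pass to a coordinate chart where $\omega=dd^c\rho$''. In this paper $\omega$ is only a smooth semi-positive $(1,1)$-form on a hermitian manifold; it is not assumed closed, so it has no local $dd^c$-potentials, and replacing $\f,\p$ by $\rho+\f,\rho+\p$ is not available. Fortunately this reduction is also unnecessary: locally one may write $\f=(\f+C|z|^2)-C|z|^2$ with $\f+C|z|^2$ bounded psh, expand $(\omega+dd^c\f)^j\wedge(\omega+dd^c\p)^{n-j}$ multilinearly, and observe that quasi-continuity, plurifine locality and monotone convergence all hold for such mixed measures with a smooth (non-closed) background form. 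So you should run your $\max(\f,\p-\e)$ argument directly on the currents $\omega+dd^c\f$, $\omega+dd^c\p$ rather than on purported local potentials; with that modification the proof is correct.
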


We shall also need the following generalization of the inequality of arithmetic and
 geometric means (see \cite[Lemma 1.9]{N16}):

\begin{lem} \label{lem:AGM}
Let $\f_1,\ldots,\f_n$ be bounded $\omega$-psh functions such that  
$(\omega+dd^c \f_i)^n\geq f_i dV_X$. Then
$
 (\omega+dd^c \f_1) \wedge \cdots \wedge (\omega+dd^c \f_n) \geq (\Pi_{i=1}^n f_i)^{\frac{1}{n}} dV_X.
$
\end{lem}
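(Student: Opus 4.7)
The plan is to reduce the inequality between positive measures to the classical Alexandrov arithmetic--geometric inequality for mixed discriminants of Hermitian positive semi-definite matrices, and then to lift the smooth case to the bounded one by standard Bedford--Taylor approximation. The assertion is local and pointwise, so fix a point $p \in X$ and choose a holomorphic chart with an orthonormal coframe for $\omega_X$ at $p$ in which $\omega(p)$ is Hermitian diagonal. In the smooth case, write $\omega + dd^c \f_i = \tfrac{i}{2} \sum_{\alpha,\beta} h^{(i)}_{\alpha \bar\beta}\, dz_\alpha \wedge d\bar z_\beta$: the matrices $H_i := (h^{(i)}_{\alpha \bar\beta})$ are positive semi-definite, and expanding wedge products at $p$ yields
\[
(\omega+dd^c \f_i)^n = n!\, \det(H_i)\, dV_X, \qquad \bigwedge_{i=1}^n (\omega+dd^c \f_i) = n!\, D(H_1,\ldots,H_n)\, dV_X,
\]
where $D(\cdot)$ denotes the normalized mixed discriminant. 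The hypothesis becomes $n! \det(H_i) \geq f_i(p)$ and the conclusion reduces to the pointwise linear-algebra bound
\[
D(H_1,\ldots,H_n) \geq \bigl(\det H_1 \cdots \det H_n\bigr)^{1/n}.
\]

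This last inequality is Alexandrov's arithmetic--geometric inequality for mixed discriminants. I would derive it from G{\aa}rding's theory of hyperbolic polynomials: since $\det$ is hyperbolic with respect to every positive-definite Hermitian matrix, the polarized functional $D(\cdot)$ is G{\aa}rding-superadditive on the positive cone, which, combined with the homogeneity of $\det^{1/n}$, yields the displayed bound. Equivalently one can iterate the Alexandrov--Fenchel-type inequality
\[
D(H_1,H_2,H_3,\ldots,H_n)^2 \geq D(H_1,H_1,H_3,\ldots,H_n)\, D(H_2,H_2,H_3,\ldots,H_n)
\]
to obtain the symmetric form of the bound. This linear-algebra input is the heart of the argument.

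To treat arbitrary bounded $\omega$-psh functions $\f_i$, I would work in a small coordinate ball on which $\omega \leq dd^c(C|z|^2)$, so that $u_i := \f_i + C|z|^2$ is bounded plurisubharmonic. Approximating each $u_i$ from above by smooth strictly psh $u_i^{(k)}$ via standard convolution produces smooth $\f_i^{(k)} \searrow \f_i$ which are $(\omega + \e_k \omega_X)$-psh on a slightly smaller ball for some $\e_k \searrow 0$, with Monge--Amp\`ere densities $f_i^{(k)} \to f_i$ in $L^1$. The smooth case applies to these approximants, and weak continuity of the Bedford--Taylor mixed Monge--Amp\`ere operator along decreasing sequences of uniformly bounded quasi-psh functions lets us pass to the limit. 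The only substantive obstacle is the mixed-discriminant inequality for Hermitian matrices; given that, the pluripotential-theoretic steps are routine.
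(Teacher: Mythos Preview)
The paper does not prove this lemma itself; it simply cites \cite[Lemma~1.9]{N16}. Your treatment of the smooth case via the Alexandrov mixed-discriminant inequality is correct and is indeed the algebraic heart of the result.

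The gap lies in your passage to bounded $\omega$-psh functions. You write that convolving the potentials gives smooth $\f_i^{(k)}\searrow\f_i$ ``with Monge--Amp\`ere densities $f_i^{(k)}\to f_i$ in $L^1$.'' This is exactly the nontrivial point, and it is not true in general. Bedford--Taylor continuity only yields weak convergence of the measures $(\omega+dd^c\f_i^{(k)})^n$ to $(\omega+dd^c\f_i)^n$, and the latter is only assumed to \emph{dominate} $f_i\,dV_X$; it may have a singular part, and in any case weak convergence of measures says nothing about $L^1$ convergence of densities. More to the point, the operation you must pass through the limit is the \emph{geometric mean} of the smooth densities $g_i^{(k)}$, and this is not lower semicontinuous under weak convergence of the individual factors: two sequences of nonnegative densities can each converge weakly to $1$ while their pointwise product is identically $0$ (think of complementary fine checkerboards). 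So the sentence ``the pluripotential-theoretic steps are routine'' glosses over precisely the difficulty that makes the lemma nonobvious for bounded, non-smooth potentials.

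The proofs in the literature (going back to Ko{\l}odziej and Dinew, and reproduced in \cite{N16}) do not regularize the potentials. They combine the smooth pointwise inequality with the comparison principle and local solvability of the Dirichlet problem, approximating the densities $f_i$ rather than the potentials $\f_i$, so that one works with approximants for which equality holds in the hypothesis and the right-hand side is controlled by construction. Some such additional input is required; naive regularization of the $\f_i$ does not close the argument.
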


\subsubsection{Positivity assumptions}
 On a few occasions we will need to assume slightly stronger positivity properties
 of the form $\omega$:

\begin{defi}
We say   {\it $\omega$ satisfies condition (B)} if there exists
 $B \geq 0$ such that
 $$
 -B \omega^2 \leq dd^c \omega \leq B \omega
 \; \; \text{ and } \; \;
 -B \omega^3 \leq d \omega \wedge d^c \omega \leq B \omega^3.
 $$
\end{defi}

Here are three different contexts where this condition is satisfied:
\begin{itemize}
\item any hermitian metric $\omega>0$ satisfies condition (B);
\item if $\pi:X \rightarrow Y$ is a desingularization of a singular compact complex variety $Y$
and  $\omega_Y$ is a hermitian metric, 
then $\omega=\pi^*\omega_Y$ satisfies condition (B);
\item if $\omega$ is semi-positive and closed, then it satisfies condition (B).
\end{itemize}
Combining these one obtains further settings where condition (B) is satisfied.

\begin{defi}
We say  that
\begin{itemize}
\item  {\it $\omega$ is non-collapsing} if 
 for any bounded $\omega$-psh function, the complex Monge-Amp\`ere 
 measure $(\omega+dd^c u)^n$ has positive mass: $\int_X (\omega+dd^c u)^n>0$;
 \item  {\it $\omega$ is uniformly non-collapsing} if $v_-(\omega)>0$, where
 $$
 v_-(\omega):=\inf \left\{ \int_X (\omega+dd^c u)^n \; : \; u \in \PSH(X,\omega) \cap L^{\infty}(X) \right\}.
 $$
\end{itemize}
 \end{defi}

These positivity notions are studied at length in \cite{GL21b}. It is shown there that
condition (B) implies non-collapsing, we further expect 
 it implies uniform non-collapsing (at least in 
the case if $X$  belongs to the Fujiki class). 

\begin{defi}
We say   {\it $\omega$ is big} if 
there exists an $\omega$-psh function $\rho$ with analytic singularities
such that $\omega+dd^c \rho \geq \delta \omega_X$
dominates a hermitian form.
\end{defi}

If $V$ is a compact complex space endowed with a hermitian form $\omega_V$, and 
$\pi:X \rightarrow V$ is a resolution of singularities, then $\omega=\pi^* \omega_V$ is big.
This follows from classical arguments (see e.g. \cite[Proposition 3.2]{FT09}).

It is expected that $\omega$ is big if and only if $v_-(\omega)>0$.
This is a generalization of a conjecture  of Demailly-P\u{a}un \cite[Conjecture 0.8]{DP04},
which has been addressed in \cite{GL21b}: it is in particular shown 
in \cite[Theorem 4.6]{GL21b} that
if $v_+(\omega_X)<+\infty$ and $v_-(\omega)>0$ then $\omega$ is big, 
where
$$
v_+(\omega_X):=\sup \left\{ \int_X (\omega_X+dd^c u)^n \; :\;  u \in \PSH(X,\omega_X) \cap L^{\infty}(X) \right\}.
$$

\subsubsection{Envelopes} \label{sec:envelopes}

\begin{defi} \label{def:usual}
Given a Lebesgue measurable function $h:X \rightarrow \R$, we define the $\omega$-psh envelope of $h$ by
$$
P_{\omega}(h) := \left(\sup \{ u \in \psh (X,\omega) \; :\;  u \leq h  \, \,  \, \, X\}\right)^*,
$$
where the star means that we take the upper semi-continuous regularization.
\end{defi}

The following  has been established in \cite[Theorem 2.3]{GL21b}:

 \begin{thm} \label{thm:orthog}
If $h$ is bounded from below, quasi-l.s.c., and $P_{\omega}(h) <+\infty$, then
\begin{itemize}
\item $P_{\omega}(h)$ is a bounded $\omega$-plurisubharmonic function;
\item $P_{\omega}(h) \leq h$ in $X \setminus P$, where $P$ is pluripolar;
\item $(\omega+dd^c P_{\omega}(h))^n$ is concentrated on the contact  set $\{ P_{\omega}(h)=h\}$.
\end{itemize}
\end{thm}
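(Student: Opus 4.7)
Set $u := P_{\omega}(h)$ and $\mc F := \{v \in \PSH(X,\omega) : v \leq h \text{ on } X\}$; I would verify the three bullets in order. Since $h \geq M$ for some constant $M$ and constants lie in $\PSH(X,\omega)$ (as $\omega \geq 0$), $M \in \mc F$ and thus $u \geq M$; combined with the automatic upper bound for $\omega$-psh functions on the compact manifold $X$, this gives $u \in \PSH(X,\omega) \cap L^\infty(X)$, which is the first bullet. Choquet's lemma yields a countable subfamily $(u_j) \subset \mc F$ with $(\sup_j u_j)^* = u$, and the classical Bedford--Taylor negligibility result asserts that $\sup_j u_j$ coincides with its usc regularization off a pluripolar set $P$; since $\sup_j u_j \leq h$ pointwise, this gives $u \leq h$ on $X \setminus P$, establishing the second bullet.

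For the contact-set concentration of $\mu := (\omega + dd^c u)^n$, fix $\vep > 0$ and set $N_\vep := \{u < h - \vep\}$. The quasi-lower-semicontinuity of $h$ combined with upper-semicontinuity of $u$ makes $h - u$ quasi-lsc, so there is a pluripolar $Q_\vep$ with $U_\vep := N_\vep \setminus Q_\vep$ open in $X$. Since the bounded Monge--Amp\`ere measure $\mu$ does not charge pluripolar sets, it is enough to prove $\mu(U_\vep) = 0$ for each $\vep > 0$; the claim $\mu(\{u < h\}) = 0$ then follows by letting $\vep \searrow 0$.

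I would argue by contradiction. Suppose $\mu(U_\vep) > 0$ and choose a closed coordinate ball $\bar B \subset U_\vep$ with $\mu(B) > 0$ and $u + \vep < h$ on $\bar B$. The core step is a local balayage producing $w \in \PSH(X,\omega)$ with $w = u$ on $X \setminus B$, $w \geq u$ on $B$, $w \leq u + \vep/2$ on $B$, and $(\omega + dd^c w)^n = 0$ on $B$. Granted such $w$, one has $w \leq h$ on $X$, so $w \in \mc F$, which forces $w \leq u$; combined with $w \geq u$ this yields $w = u$ on $B$, whence $\mu(B) = (\omega + dd^c w)^n(B) = 0$, contradicting the choice of $B$.

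The main technical obstacle is the construction of $w$. In the K\"ahler case one writes $\omega = dd^c \rho$ locally and reduces at once to the Bedford--Taylor Dirichlet problem; here, the non-closedness of $\omega$ forces a different route. Two standard workarounds are available: (a) pick a smooth $g$ on $B$ making $\omega + dd^c g$ strictly positive (possible after shrinking $B$, using smoothness and semi-positivity of $\omega$) and solve the twisted Dirichlet problem with boundary data adjusted by $g$, arranging the estimate $w \leq u + \vep/2$ by choosing $B$ small enough that the oscillation of $u + g$ on $\bar B$ is controlled; or (b) first prove the theorem when $h$ is continuous---a classical case via local Perron solutions---and recover the general quasi-l.s.c.\ $h$ by approximating from below by continuous $h_j \nearrow h$ and exploiting monotone stability of envelopes together with weak continuity of Monge--Amp\`ere measures along increasing sequences of bounded $\omega$-psh functions.
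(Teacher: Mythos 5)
The paper does not actually prove this statement: it is quoted from \cite[Theorem 2.3]{GL21b}, so there is no internal proof to compare yours against, and I can only judge your argument on its own merits. Your treatment of the first two bullets is the standard one and is essentially correct (one small slip: the upper bound on the envelope is not ``automatic'' --- a \emph{family} of $\omega$-psh functions on a compact manifold need not be uniformly bounded above; it is precisely the hypothesis $P_{\omega}(h)<+\infty$ that you must invoke here). The genuine gap is in the third bullet, at the step you yourself flag as the main obstacle: the construction of the balayage $w$. Your workaround (a) does not work as described. If $g$ is smooth with $dd^c g\geq\omega$ on $B$ and $v$ solves the classical Bedford--Taylor problem $(dd^c v)^n=0$ with boundary data $u+g$, then $w:=v-g$ satisfies $\omega+dd^c w=dd^c v-(dd^c g-\omega)$, and there is no reason for this to be a positive form: one would need $dd^c v\geq dd^c g-\omega$, which does not follow from $dd^c v\geq 0$. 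In particular $w$ need not be $\omega$-psh, and the inequality $(\omega+dd^c w)^n\leq (dd^c v)^n=0$ is unavailable since monotonicity of the Monge--Amp\`ere operator only holds between \emph{positive} forms. What is really needed is the solvability of the twisted homogeneous Dirichlet problem $(\omega+dd^c w)^n=0$ with $\omega+dd^c w\geq 0$ on a strictly pseudoconvex domain for a non-closed $\omega$; this is true, but it is a theorem in its own right (Cherrier, Guan--Li, Ko{\l}odziej--Nguyen), not a change of boundary data in Bedford--Taylor, and defining $w$ as a local Perron envelope and then trying to prove its maximality is circular.

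Workaround (b) does not close the gap either: a quasi-l.s.c.\ function is not an increasing limit of continuous functions (quasi-l.s.c.\ only gives lower semicontinuity off open sets of small capacity), a competitor for $h$ is not a competitor for any $h_j\leq h$, so $\lim_j P_{\omega}(h_j)=P_{\omega}(h)$ itself requires proof, and even for continuous $h$ the ``classical local Perron'' step is again the twisted Dirichlet problem above. Two further, more easily repaired, slips: quasi-lower-semicontinuity of $h-u$ does \emph{not} produce a pluripolar $Q_{\vep}$ with $N_{\vep}\setminus Q_{\vep}$ open --- it gives that $N_{\vep}$ agrees with an open set up to sets of arbitrarily small capacity, so your reduction needs the domination of $(\omega+dd^c u)^n$ by capacity for bounded $u$ (itself a hermitian pluripotential estimate), not merely that the measure puts no mass on pluripolar sets. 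Finally, outside $B$ you only know $w=u\leq h$ quasi-everywhere, so $w$ is a competitor only for the envelope defined with the constraint imposed q.e.; identifying that envelope with $P_{\omega}(h)$ as defined in Definition \ref{def:usual} is standard but not free.
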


A useful consequence is the following (see \cite[Lemma 2.5]{GL21b}):

\begin{lem} \label{lem:envmin}
Fix $\lambda\geq 0$ and let $u,v$ be bounded $\omega$-psh functions. 
Fix two smooth semi-positive $(1,1)$-forms $\omega_1,\omega_2$ such that $\omega_1 \geq \omega,\omega_2\geq \omega$. 

(i) If $(\omega_1+dd^c u)^n\leq e^{\lambda u}fdV_X$ and $(\omega_2+dd^c v)^n\leq e^{\lambda v}gdV_X$, then
$$
(\omega+dd^c P_{\omega}(\min(u,v)))^n \leq e^{\lambda P_{\omega}(\min(u,v))}\max(f,g) dV_X.
$$

(ii) If $(\omega+dd^c u)^n\geq e^{\lambda u}fdV_X$ and $(\omega+dd^c v)^n\geq e^{\lambda v}gdV_X$, then
$$
(\omega+dd^c \max (u,v)))^n \geq e^{\lambda \max(u,v))} \min(f,g) dV_X.
$$
\end{lem}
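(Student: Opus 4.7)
I would reduce both parts to the contact-set comparison of Lemma~\ref{lem:Demkey} combined with the orthogonality statement of Theorem~\ref{thm:orthog}. Part (ii) is the easier warm-up, so I would do it first, then tackle (i) where I must absorb the discrepancy between $\omega$ and the larger forms $\omega_1, \omega_2$.

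For (ii), set $w := \max(u,v)$, which is bounded and $\omega$-psh. At every point one has $w = u$ or $w = v$, so $X = \{w = u\} \cup \{w = v\}$. Applying Lemma~\ref{lem:Demkey} with $j = n$ to the pair $u \leq w$ gives $\mathbf{1}_{\{w=u\}} (\omega+dd^c u)^n \leq \mathbf{1}_{\{w=u\}} (\omega+dd^c w)^n$, so the hypothesis on $u$ yields $\mathbf{1}_{\{w = u\}} (\omega+dd^c w)^n \geq \mathbf{1}_{\{w=u\}} e^{\lambda u} f\, dV_X = \mathbf{1}_{\{w = u\}} e^{\lambda w} f\, dV_X$; the symmetric argument gives the analogous bound with $v, g$. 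Since the two contact sets cover $X$, taking the pointwise maximum of these two lower bounds produces $(\omega + dd^c w)^n \geq e^{\lambda w}\min(f,g)\, dV_X$, as desired.

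For (i), let $\phi := P_\omega(\min(u,v))$; it is bounded and $\omega$-psh, and since $\min(u,v)$ is upper semi-continuous one has $\phi \leq \min(u,v)$ everywhere. Let $A := \{\phi = u\}$ and $B := \{\phi = v\}$, so that the contact set equals $A \cup B$ and $(\omega+dd^c\phi)^n$ is carried by $A \cup B$ by Theorem~\ref{thm:orthog}. The key observation is that both $u$ and $\phi$ are $\omega_1$-psh (because $\omega_1 \geq \omega$), so Lemma~\ref{lem:Demkey} applied with reference form $\omega_1$, pair $\phi \leq u$, and $j = n$ gives $\mathbf{1}_A(\omega_1 + dd^c\phi)^n \leq \mathbf{1}_A(\omega_1 + dd^c u)^n \leq \mathbf{1}_A e^{\lambda u} f\, dV_X$. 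Expanding $\omega_1 + dd^c\phi = (\omega+dd^c\phi) + (\omega_1 - \omega)$ and discarding non-negative cross terms yields the monotonicity $(\omega+dd^c\phi)^n \leq (\omega_1+dd^c\phi)^n$, and since $u = \phi$ on $A$ this becomes $\mathbf{1}_A(\omega+dd^c\phi)^n \leq \mathbf{1}_A e^{\lambda \phi} f\, dV_X$. The identical argument with $\omega_2$ in place of $\omega_1$ gives $\mathbf{1}_B(\omega+dd^c\phi)^n \leq \mathbf{1}_B e^{\lambda\phi} g\, dV_X$.

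Assembling: by the orthogonality concentration, $(\omega + dd^c\phi)^n = \mathbf{1}_A(\omega+dd^c\phi)^n + \mathbf{1}_{B\setminus A}(\omega+dd^c\phi)^n \leq e^{\lambda\phi}\bigl(\mathbf{1}_A f + \mathbf{1}_{B\setminus A} g\bigr) dV_X \leq e^{\lambda\phi} \max(f,g)\, dV_X$, which is the claimed inequality. The only genuine obstacle is the step in (i) where I must transfer an inequality proved with the form $\omega_i$ into one for $\omega$; this is handled cleanly by the monotonicity $(\omega+dd^c\phi)^n \leq (\omega_i+dd^c\phi)^n$, which is available precisely because $\phi$ is $\omega$-psh and $\omega_i - \omega \geq 0$.
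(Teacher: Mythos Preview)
Your proof is correct and follows essentially the same approach as the paper: concentration of $(\omega+dd^c\phi)^n$ on the contact set via Theorem~\ref{thm:orthog}, the monotonicity $(\omega+dd^c\phi)^n \leq (\omega_i+dd^c\phi)^n$, and then the contact-set comparison of Lemma~\ref{lem:Demkey} applied with the larger reference forms $\omega_i$. The only cosmetic difference is your partition $A \cup (B\setminus A)$ versus the paper's $\{\phi=u<v\} \cup \{\phi=v\}$, and that you supply a direct argument for (ii) whereas the paper simply cites \cite[Lemma~2.5]{GL21b}.
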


\begin{proof}
	The second statement follows from \cite[Lemma 2.5]{GL21b}. We prove the first one. Setting $\f:= P_{\omega}(\min(u,v))$, by Theorem \ref{thm:orthog} and Lemma \ref{lem:Demkey} we have 
	\begin{flalign*}
		(\omega+dd^c \f)^n &\leq {\bf 1}_{\{\f=u<v\}}(\omega+dd^c \f)^n + {\bf 1}_{\{\f=v\}} (\omega+dd^c \f)^n\\
		& \leq {\bf 1}_{\{\f=u<v\}}(\omega_1+dd^c \f)^n + {\bf 1}_{\{\f=v\}} (\omega_2+dd^c \f)^n\\
		&\leq {\bf 1}_{\{\f=u<v\}}(\omega_1+dd^c u)^n + {\bf 1}_{\{\f=v\}} (\omega_2+dd^c v)^n\\
		&\leq {\bf 1}_{\{\f=u<v\}} e^{\lambda \f} \max(f,g) dV_X + {\bf 1}_{\{\f=v\}} e^{\lambda \f} \max(f,g) dV_X\\
		&\leq e^{\lambda \f} \max(f,g) dV_X.
	\end{flalign*}
\end{proof}

The following is a key   tool to our new approach for uniform  estimates:
 
 \begin{lem} \label{lem:GLkey}
 Fix $\chi:\R^- \rightarrow \R^-$ a concave increasing function such that $\chi'(0) \geq 1$. 
  Let $\f,\phi$ be bounded $\omega$-psh functions with $\f \leq \phi$, and set $\p=\phi+\chi \circ (\f-\phi)$.
 Then
$$
   (\omega+dd^c P_{\omega}(\p))^n  \leq 1_{\{ P_{\omega}(\p)=\p\}} (\chi' \circ (\f-\phi))^n (\omega+dd^c \f)^n.
 $$
 \end{lem}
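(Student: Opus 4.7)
The plan is to construct an auxiliary bounded $\omega$-psh function $\hat\f$ that coincides with $\f$ on the contact set $C:=\{P_{\omega}(\p)=\p\}$ and whose Monge--Amp\`ere measure dominates $(\omega+dd^c P_{\omega}(\p))^n$ up to the factor $(\chi'\circ(\f-\phi))^n$. First note that concavity of $\chi$ together with $\chi'(0)\geq 1$ forces $\chi'\geq 1$ and, by integration, $\chi(t)\leq t+\chi(0)\leq t$ on $\R^-$. Hence $\p\leq\f$ and $\p\leq\phi$, so $\p$ is bounded and $u:=P_{\omega}(\p)$ is a well-defined bounded $\omega$-psh function; after a standard regularization reducing to the continuous case, Theorem \ref{thm:orthog} yields that $(\omega+dd^c u)^n$ is concentrated on $C$.

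Let $g:=\chi^{-1}$, extended if necessary to a smooth convex increasing function with $g'=1/\chi'\in(0,1]$ and $g''\geq 0$, and set
\[
\hat\f:=\phi+g(u-\phi).
\]
On $C$ one has $u-\phi=\chi(\f-\phi)$, so $\hat\f=\f$; elsewhere $u<\p$ and the monotonicity of $g$ give $\hat\f<\f$. The chain rule produces
\[
\omega+dd^c\hat\f \;=\; g'(u-\phi)(\omega+dd^c u)\;+\;(1-g'(u-\phi))(\omega+dd^c\phi)\;+\;g''(u-\phi)\,d(u-\phi)\wedge d^c(u-\phi),
\]
a sum of three positive currents since $g'\in(0,1]$ and $g''\geq 0$. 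In particular $\hat\f\in\PSH(X,\omega)\cap L^{\infty}(X)$, and discarding the last two non-negative summands before taking the $n$-th wedge gives
\[
(\omega+dd^c\hat\f)^n\;\geq\; (g'(u-\phi))^n(\omega+dd^c u)^n.
\]
Since $g'(u-\phi)=1/\chi'(\f-\phi)$ on $C$, this rearranges to $(\omega+dd^c u)^n\leq (\chi'(\f-\phi))^n(\omega+dd^c\hat\f)^n$ on $C$.

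To replace $\hat\f$ by $\f$, I apply Lemma \ref{lem:Demkey} to the pair $\hat\f\leq\f$ with $j=n$:
\[
\mathbf{1}_{\{\hat\f=\f\}}(\omega+dd^c\hat\f)^n\leq \mathbf{1}_{\{\hat\f=\f\}}(\omega+dd^c\f)^n.
\]
Combining this with the previous bound on $C\subset\{\hat\f=\f\}$ and with the support property of $(\omega+dd^c u)^n$ yields the claim. The main technical hurdle is the rigorous justification of the chain-rule identity for $\hat\f$ when $u,\phi$ are only bounded $\omega$-psh: in particular, the gradient term $d(u-\phi)\wedge d^c(u-\phi)$ must be interpreted via a two-stage approximation, replacing $\chi$ by smooth concave $\chi_j\nearrow\chi$ and $u,\phi$ by decreasing continuous $(\omega+\e\omega_X)$-psh sequences, and then passing to the weak limit of currents using continuity of Bedford--Taylor products along monotone bounded sequences. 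Only the one-sided inequality $(\omega+dd^c\hat\f)^n\geq (g'(u-\phi))^n(\omega+dd^c u)^n$ needs to survive the limit, which makes the limiting step robust.
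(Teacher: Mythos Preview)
Your proof is correct and follows essentially the same route as the one the paper cites from \cite{GL21a}: the auxiliary function $\hat\f=\phi+g(u-\phi)$ with $g=\chi^{-1}$ is exactly the device that converts the formal inequality $\omega+dd^c\p\leq\chi'(\f-\phi)\,\omega_\f$ into a rigorous comparison, and the two ingredients you invoke---Theorem~\ref{thm:orthog} for the support of $(\omega+dd^c u)^n$ and Lemma~\ref{lem:Demkey} for the contact-set inequality between $\hat\f$ and $\f$---are precisely the ones the paper names. One minor simplification: since bounded $\omega$-psh functions are quasi-continuous, $\p=\phi+\chi(\f-\phi)$ is already quasi-l.s.c.\ and bounded, so Theorem~\ref{thm:orthog} applies directly without a preliminary regularization; the approximation argument is only needed (as you note) to justify the chain-rule decomposition of $\omega+dd^c\hat\f$.
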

 
 The proof is identical to that of \cite[Lemma 1.6]{GL21a}, a consequence of
 Theorem \ref{thm:orthog} and Lemma \ref{lem:Demkey}.

 \subsection{Comparison and domination principles}
 
  \subsubsection{Comparison principle}

 The comparison principle plays a central role in K\"ahler pluripotential theory.
A "modified comparison principle" has been established by Ko{\l}odziej-Nguyen 
 \cite[Theorem 0.2]{KN15} when $\omega$ is hermitian.
  We extend the latter in  this section, assuming that 
  $\omega$ is merely big: we fix
   \begin{itemize}
   \item  an $\omega$-plurisubharmonic function $\rho$ with analytic singularities such that
     $\omega+dd^c \rho \geq  \delta \omega_X$ for some $\delta>0$; we set $\Omega:= \{\rho>-\infty\}$;
     \item   a constant $B_1>0$ 
      such that for all $x \in \Omega$,
	\[
	-B_1  \omega_{\rho}^2 \leq dd^c \omega \leq B_1\omega_{\rho}^2,
	\text{ and }
	-B_1  \omega_{\rho}^3 \leq d \omega \wedge d^c \omega \leq B_1\omega_{\rho}^3. 
	\]
\end{itemize}    
    
 The existence of $B_1$ is clear since 
 $-B  \omega_{X}^2 \leq dd^c \omega \leq B\omega_{X}^2$
 for some $B>0$, and
 $-B  \omega_{X}^3 \leq d \omega \wedge d^c \omega \leq B\omega_{X}^3$.

	 \begin{theorem} \label{thm:pcpr}
		Assume $\omega$ is big and let		$\rho, B_1$ be as above.	
		Let $u$ be a bounded $\omega$-psh function, and  set $m:= \inf_X (u-\rho)$. Then for $s>0$ small enough we have 
	\[
	(1-4B_1s(n-1)^2)^n \int_{\{u <\rho +m+s\}} \omega_{\rho}^n   \leq \int_{\{u <\rho +m +s\}}  \omega_u^n.
	\]
	In particular, $\omega$ is non-collapsing. 
 \end{theorem}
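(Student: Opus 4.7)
Plan. I would adapt the modified comparison principle of Ko{\l}odziej--Nguyen \cite[Theorem 0.2]{KN15} from the hermitian to the big setting, using $\omega_{\rho}$ as the strictly positive reference form. The assumption on $B_1$ is precisely what is needed so that the torsion contributions $dd^c\omega$ and $d\omega \wedge d^c\omega$ are dominated by $\omega_{\rho}^2$ and $\omega_{\rho}^3$ on $\Omega$, exactly as $\omega$ dominates its own torsion in the pure hermitian setting.

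First, I would reduce to a bounded situation. Set $\rho_k := \max(\rho, -k) \in \PSH(X,\omega) \cap L^{\infty}(X)$, $v_k := \rho_k + m + s$ and $E_k := \{u < v_k\}$. Since $u$ is bounded, the set $E := \{u < \rho + m + s\}$ is contained in $\Omega = \{\rho > -\infty\}$, and for $k$ large enough $E_k$ coincides with $E$ outside a pluripolar set, while $\omega + dd^c \rho_k = \omega_{\rho}$ on $E_k$. It then suffices to establish the claimed inequality for the pair $(v_k, E_k)$ and pass to the limit.

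The main step is a chain of mixed Monge--Amp\`ere comparisons. On the slightly smaller set $E_k^{\epsilon} := \{u < v_k - \epsilon\}$, Stokes' theorem applied to a test form of the shape $(v_k - u)\, d^c(v_k - u) \wedge \omega_u^{j} \wedge \omega_{v_k}^{n-j-1}$, combined with $dd^c(v_k - u) = \omega_{v_k} - \omega_u$, yields after collecting the torsion contributions an inequality
$$
(1 - 4B_1 s(n-1)^2) \int_{E_k^{\epsilon}} \omega_{v_k}^{n-j} \wedge \omega_u^{j} \;\leq\; \int_{E_k^{\epsilon}} \omega_{v_k}^{n-j-1} \wedge \omega_u^{j+1}
$$
for every $0 \leq j \leq n-1$. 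Each torsion term should be absorbed using $0 \leq v_k - u \leq s$ on $E_k$ together with the bounds on $B_1$, the combinatorial factor $(n-1)^2$ reflecting the number of wedge factors that can contribute torsion at any given step. Composing the $n$ resulting inequalities and letting $\epsilon \to 0$, then $k \to \infty$, produces the desired estimate. The non-collapsing statement is then immediate: at any point where $u - \rho$ attains its minimum $m$, a full neighbourhood lies inside $E$, so $\int_E \omega_{\rho}^n \geq \delta^n \vol(E) > 0$, forcing $\int_X \omega_u^n > 0$.

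The main obstacle is the precise bookkeeping of the torsion. Two distinct types of error will appear, one from $dd^c \omega$ and one from $d\omega \wedge d^c\omega$, and both must simultaneously be dominated by $B_1 s$ times mixed wedge products involving $\omega_{\rho}$. Obtaining the clean factor $4B_1 s(n-1)^2$ per step relies on using the full strength of both inequalities defining condition (B) for $\omega_{\rho}$, together with a Cauchy--Schwarz type trick to absorb the cross terms $d(v_k - u) \wedge d^c\omega$; moreover, the unboundedness of $\rho$ forces some care when invoking Bedford--Taylor continuity on the truncations $\rho_k$.
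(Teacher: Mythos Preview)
Your overall strategy---a Ko{\l}odziej--Nguyen chain of mixed Monge--Amp\`ere inequalities with the torsion of $\omega$ controlled by $\omega_\rho$---is exactly right, and matches the paper. But the mechanism you describe for each step is more awkward than necessary and, as written, does not obviously close.

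The paper avoids both the truncation $\rho_k$ and the work on the open set $E_k^\epsilon$ by setting $\phi:=\max(u,\rho+m+s)$. Since $u$ is bounded, $\phi$ is a \emph{bounded} $\omega$-psh function on all of $X$, and $U:=\{u<\phi\}=\{u<\rho+m+s\}$ is already relatively compact in $\Omega$; no limit in $k$ is needed. With $T_k:=\omega_u^k\wedge\omega_\phi^{n-k}$, Lemma~\ref{lem:Demkey} gives ${\bf 1}_{\{u=\phi\}}T_k\geq {\bf 1}_{\{u=\phi\}}T_{k+1}$, so
\[
\int_U(T_k-T_{k+1})\;\leq\;\int_X(T_k-T_{k+1})\;=\;\int_X(\phi-u)\,dd^c\!\left(\omega_u^k\wedge\omega_\phi^{n-k-1}\right),
\]
the last equality being Stokes on the compact manifold $X$ (no boundary). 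Expanding $dd^c$ of the wedge product yields only terms involving $dd^c\omega$ and $d\omega\wedge d^c\omega$; the cross terms $d(\phi-u)\wedge d^c\omega$ that you worry about simply do not appear, so no Cauchy--Schwarz trick is needed. Since $0\leq\phi-u\leq s$ is supported in $U$, and on $U$ one has $\omega_\rho^\ell\wedge\omega_\phi^{n-\ell}=\omega_\phi^n$, the torsion contribution is bounded by $B_1 s$ times a combination of $\int_U T_{k-2},\int_U T_{k-1},\int_U T_k$.

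This last point is the other place where your write-up is too loose: the $n$ inequalities are not independent. The error at step $k$ involves $T_{k-1}$ and $T_{k-2}$, and one needs the \emph{induction hypothesis} $(1-4a)\int_U T_j\leq\int_U T_{j+1}$ for $j<k$ (with $a=B_1s(n-1)^2$) to absorb these into $\int_U T_k$. Only then does one obtain $(1-4a)\int_U T_k\leq\int_U T_{k+1}$, and the product over $k=0,\dots,n-1$ gives the stated constant. Your phrase ``composing the $n$ resulting inequalities'' hides this dependence.

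Finally, for non-collapsing the paper argues via Lebesgue measure rather than attainment of the infimum: if $\omega_u^n=0$ then the inequality forces $\omega_X^n(\{u<\rho+m+s\})=0$, hence $u\geq\rho+m+s$ everywhere (two quasi-psh functions agreeing a.e.\ agree everywhere), contradicting $m=\inf_X(u-\rho)$. Your version assumes $u-\rho$ attains its minimum and is continuous there, which is not immediate.
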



	\begin{proof}
			We set $\phi = \max(u,\rho+m+s)$ and $U:= \{u<\phi\}= \{u<\rho+m+s\}$. Observe that $U$ is relatively compact in the open set $\Omega$. For each $k\geq 0$ we set $T_k:= \omega_u^k \wedge \omega_{\phi}^{n-k}$ and $T_l=0$ for $l\leq 0$. Set $a=B_1 s(n-1)^2$. We prove by induction on $k=0,1,...,n-1$ that 
	\begin{equation}
		\label{eq: induction RCP}
		(1-4a) \int_U T_k \leq \int_U T_{k+1}.
	\end{equation}
	The conclusion follows since $\omega_{\phi}^n = \omega_{\rho}^n$ on the plurifine open set $U= \{u <\phi\}$. 
	
	We first prove \eqref{eq: induction RCP} for $k=0$. Since $u\leq \phi$
	  Lemma \ref{lem:Demkey} ensures that
	\[
	{\bf 1}_{\{u=\phi\}} \omega_{\phi}^n \geq {\bf 1}_{\{u=\phi\}} \omega_u \wedge \omega_{\phi}^{n-1}. 
	\]
	Noting that $X\setminus U = \{u=\phi\}$ we can write 
	\[
	\int_U (T_0-T_1) = \int_U (\omega_{\phi}^n - \omega_u\wedge \omega_{\phi}^{n-1}) \leq \int_X (\omega_{\phi}^n - \omega_u\wedge \omega_{\phi}^{n-1})=\int_X dd^c (\phi-u) \wedge \omega_{\phi}^{n-1}.  
	\]
Observe that
\begin{flalign*}
dd^c \omega_{\phi}^{n-1} & = (n-1) dd^c \omega \wedge \omega_{\phi}^{n-2} + n(n-1) d\omega \wedge d^c\omega \wedge \omega_{\phi}^{n-3} \\
& \leq 	B_1(n-1) \omega_{\rho}^2 \wedge  \omega_{\phi}^{n-2} + (n-1)(n-2)B_1 \omega_{\rho}^3 \wedge \omega_{\phi}^{n-3}. 
\end{flalign*}
Since $0\leq \phi-u \leq s$ and $U=\{u<\phi\}$, it follows from Stokes' theorem that 
\begin{flalign*}
	\int_X dd^c (\phi-u) \wedge \omega_{\phi}^{n-1} & = \int_X (\phi-u) dd^c \omega_{\phi}^{n-1}\\
	&\leq s B_1(n-1)\int_U ( \omega_{\rho}^2 \wedge \omega_{\phi}^{n-2} + (n-2) \omega_{\rho}^3 \wedge \omega_{\phi}^{n-3})\\
	& \leq sB_1(n-1)^2 \int_U \omega_{\phi}^n,
\end{flalign*}
using that $\omega_{\rho}^k \wedge \omega_{\phi}^{n-k}=\omega_{\phi}^n$ on the plurifine
open set $U$. We thus get $\int_U (T_0-T_1) \leq sB_1(n-1)^2\int_U T_0$, proving \eqref{eq: induction RCP} for $k=0$. 

\smallskip

We now assume that \eqref{eq: induction RCP} holds for $j \leq k-1$, and we check that it still holds for $k$.  Observe that
\begin{eqnarray*}
\lefteqn{dd^c \left(\omega_u^k \wedge\omega_{\phi}^{n-[k+1]} \right)  } \\
& =&  k dd^c\omega \wedge\omega_u^{k-1} \wedge\omega_{\phi}^{n-[k+1]}
+(n-[k+1]) dd^c\omega \wedge \omega_u^k \wedge \omega_{\phi}^{n-[k+2]} \\
&+& 2 k(n-[k+1]) d\omega \wedge d^c \omega \wedge \omega_u^{k-1} \wedge \omega_{\phi}^{n-[k+2]} \\
&
+&k(k-1) d\omega \wedge d^c \omega \wedge \omega_u^{k-2} \wedge \omega_{\phi}^{n-[k+1]} \\
&+&  (n-[k+1])[n-(k+2)] d \omega \wedge d^c \omega\wedge \omega_u^{k} \wedge \omega_{\phi}^{n-[k+3]}.
\end{eqnarray*}
The same arguments as above therefore show that 
\begin{flalign*}
 \int_{U} (T_k-T_{k+1}) &\leq  \int_X (T_k-T_{k+1}) = \int_X (\phi-u) dd^c (\omega_{u}^{k}\wedge \omega_{\phi}^{n-[k+1]}) \\
&\leq B_1s\int_{U} \left(k(k-1)T_{k-2}+2k[n-k]T_{k-1}+(n-[k+1])^2 T_{k} \right) \\
&\leq a \left ( \frac{1}{(1-4a)^2} + \frac{1}{1-4a}+1 \right ) \int_{U}T_k \\
&\leq   4a   \int_{U} T_k,
\end{flalign*}
where   the third inequality uses the induction hypothesis,
while the fourth   follows from the upper bound $4a<1/8$.
 From this we obtain \eqref{eq: induction RCP} for $k$. 
 
 \smallskip
 
We finally prove that $\omega$ is non-collapsing.  If $u\in \PSH(X,\omega)$ is bounded and $\omega_u^n=0$ then the first statement of the proposition implies, since $\omega_{\rho}^n \geq \delta^n \omega_X^n$, 
that $\omega_{X}^n(u<\rho+m+s)=0$ for $s>0$ small enough.
 Since $u$ and $\rho$ are $\omega$-psh and $\omega_X>0$, this implies $u \geq \rho +m+s$, contradicting the definition of $m$.
	\end{proof}

  \subsubsection{Domination principle}
 
Several versions of the  domination principle have been established in 
\cite[Proposition 2.2]{LPT20}, \cite[Proposition 2.8]{GL21b}.
We shall   need the following generalization, valid  for mildly unbounded $\omega$-psh functions:

\begin{prop}\label{prop: domination principle unbounded}
	Fix $\rho$ an $\omega$-psh function with analytic singularities such that $\omega+dd^c \rho \geq \delta \omega_X$, with $\delta >0$. 
	Let $u,v$ be $\omega$-psh functions such that, for all $\varepsilon>0$, 
	$$\inf_X (\min(u,v)-\varepsilon \rho) >-\infty.$$ 
	 If $\omega_u^n \leq c \omega_v^n$ on $\{u<v\} \cap \{\rho>-\infty\}$ for some $c\in [0,1)$, then $u\geq v$. 
\end{prop}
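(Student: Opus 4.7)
The plan is to use $\rho$ to regularize $v$ into an $\omega$-psh function whose Monge--Amp\`ere measure dominates a uniform multiple of $\omega_X^n$, and to invoke an adaptation of the modified comparison principle of Theorem \ref{thm:pcpr}.

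First I would normalize: after subtracting a constant from $\rho$ the hypothesis (applied with parameter $1/2$) yields $v \geq \tfrac12 \rho - C$, so after further translating $\rho$ we may assume $\rho \leq v$ on $\Omega := \{\rho > -\infty\}$. For each $\varepsilon \in (0,1)$ I then set
$$
v_\varepsilon := (1-\varepsilon) v + \varepsilon \rho,
$$
which is $\omega$-psh, satisfies $v_\varepsilon \leq v$ and $v_\varepsilon \to v$ pointwise on $\Omega$ as $\varepsilon \to 0$, and crucially
$$
\omega_{v_\varepsilon} = (1-\varepsilon)\omega_v + \varepsilon \omega_\rho \geq \varepsilon \delta \omega_X.
$$
It therefore suffices to prove $u \geq v_\varepsilon$ on $X$ for each fixed $\varepsilon > 0$; letting $\varepsilon \to 0$ then gives $u \geq v$ on $\Omega$, and since $\Omega^c$ is pluripolar and $u, v$ are upper semicontinuous, the inequality extends to all of $X$.

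I would next argue by contradiction: suppose $m := \inf_X (u - v_\varepsilon) < 0$. The decomposition $u - v_\varepsilon = (u - \varepsilon \rho) + (1-\varepsilon)(-v)$, together with the hypothesis and $\sup_X v < \infty$, gives $m > -\infty$. I would then adapt the proof of Theorem \ref{thm:pcpr} with $v_\varepsilon$ in place of $\rho$: since $\omega_{v_\varepsilon} \geq \varepsilon \delta \omega_X$, the torsion terms $dd^c\omega$ and $d\omega \wedge d^c\omega$ are controlled by a constant of order $B_1/\varepsilon$, so the same induction delivers, after first truncating $u, v_\varepsilon$ by $\max(\,\cdot\,, -k)$ (to remain in Bedford--Taylor theory) and then letting $k \to \infty$ via plurifine locality on $\{u > -k\} \cap \{v_\varepsilon > -k\}$,
$$
(1 - C_\varepsilon s)^n \int_{E_s} \omega_{v_\varepsilon}^n \leq \int_{E_s} \omega_u^n
\qquad \text{for $s>0$ small},
$$
where $E_s := \{u < v_\varepsilon + m + s\}$.

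For $s < |m|$ one has $E_s \cap \Omega \subset \{u < v_\varepsilon\} \cap \Omega \subset \{u < v\} \cap \Omega$ (using $v_\varepsilon \leq v$ on $\Omega$). The hypothesis thus yields $\omega_u^n \leq c\,\omega_v^n$ on $E_s$, while $\omega_{v_\varepsilon} \geq (1-\varepsilon)\omega_v$ gives $\omega_v^n \leq (1-\varepsilon)^{-n}\omega_{v_\varepsilon}^n$, so
$$
(1 - C_\varepsilon s)^n \int_{E_s} \omega_{v_\varepsilon}^n \leq \frac{c}{(1-\varepsilon)^n} \int_{E_s} \omega_{v_\varepsilon}^n.
$$
Since $c < 1$, picking first $\varepsilon$ and then $s$ small enough makes the factor on the left strictly larger than the factor on the right; combined with the positivity $\int_{E_s}\omega_{v_\varepsilon}^n > 0$ (which follows from $\omega_{v_\varepsilon}^n \geq (\varepsilon\delta)^n \omega_X^n$ and the non-pluripolarity of $E_s$), this is the desired contradiction.

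The hardest part will be the adaptation of Theorem \ref{thm:pcpr} to an unbounded reference function $v_\varepsilon$: one must verify that the truncations $\max(\,\cdot\,,-k)$ produce Monge--Amp\`ere measures coinciding with those of $u$ and $v_\varepsilon$ on the plurifine region where the truncation is inactive, and that the resulting estimate survives the limit $k \to \infty$. A related subtlety, which I would handle by the quasi-continuity of $\omega$-psh functions, is ensuring that $E_s$ carries positive Lebesgue (hence positive $\omega_{v_\varepsilon}^n$) measure for every $s > 0$, so that the final contradiction is genuine.
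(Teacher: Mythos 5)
Your strategy is genuinely different from the paper's. The paper never extends the integration-by-parts machinery of Theorem \ref{thm:pcpr} to unbounded functions: it sets $\phi=(1-a)v+a\rho$, forms the \emph{bounded} envelope $\varphi=P_{\omega}(bu-(b-1)\phi)$ for $b$ large, uses the orthogonality relation (Theorem \ref{thm:orthog}) and Lemma \ref{lem:Demkey} on the contact set to show $\omega_{\varphi}^n$ vanishes on $\{\varphi<\max(\varphi,\phi)\}$, and then invokes the already-established domination principle for bounded functions; the conclusion $u\geq\phi$ follows from $\varphi\leq bu-(b-1)\phi$. You instead re-run the modified comparison principle directly with the unbounded reference $v_{\varepsilon}$. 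Your reduction to $v_\varepsilon$, the control $\omega_{v_\varepsilon}\geq\varepsilon\delta\omega_X$, the inclusion $E_s\subset\{u<v\}$ for $s<|m|$, and the order of quantifiers in the final contradiction ($\varepsilon$ first, then $s$) are all correct, and the non-pluripolarity of $E_s$ is handled appropriately. If the comparison-principle step can be justified, the argument closes.

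That step, however, is where the gap sits, and your proposed device does not work as stated: the truncation $v_{\varepsilon,k}=\max(v_\varepsilon,-k)$ destroys the crucial positivity, since on the open set $\{v_\varepsilon<-k\}$ one has $\omega_{v_{\varepsilon,k}}=\omega$, which is only semi-positive; the constant $C_\varepsilon$ controlling $dd^c\omega$ and $d\omega\wedge d^c\omega$ against powers of $\omega_{v_{\varepsilon,k}}$ is therefore not available globally, and the global Stokes identity $\int_X dd^c(\phi-u)\wedge T=\int_X(\phi-u)\,dd^cT$ used in the induction of Theorem \ref{thm:pcpr} cannot be estimated as you claim after truncation. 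The correct repair is to observe that truncation is unnecessary: writing $u-v_\varepsilon=(u-\tfrac{\varepsilon}{2}\rho)+(1-\varepsilon)(-v)-\tfrac{\varepsilon}{2}\rho$ and using the hypothesis with parameter $\varepsilon/2$ shows $u-v_\varepsilon\to+\infty$ near $\{\rho=-\infty\}$, so $E_s$ is \emph{relatively compact} in $\Omega$, all of $u,v,\rho,v_\varepsilon$ are bounded on a neighborhood of $\overline{E_s}$, and since $\phi-u$ vanishes identically off $E_s$ the integration by parts localizes there (e.g.\ via a cutoff equal to $1$ near $\overline{E_s}$, whose derivative terms are killed by the vanishing of $\phi-u$). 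With that substitution for your truncation step, your route goes through; it is more computational than the paper's envelope argument, but it has the merit of not requiring the auxiliary domination principle for bounded functions as a black box.
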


The condition $\inf_X (\min(u,v)-\varepsilon \rho) >-\infty$ can be equivalently formulated as follows:
for any $\varepsilon>0$ there exists $C_{\e}>0$ such that
$u,v \geq \e \rho-C_{\e}$.
In particular  $u$ and $v$ are locally bounded in the Zariski open set $\Omega = \{\rho>-\infty\}$, 
hence the Monge-Amp\`ere measures $\omega_u^n$ and $\omega_v^n$ are  well-defined in $\Om$.

\begin{proof}
 We fix a constant $a>0$ so small that $\omega_u^n \leq c_1\omega_{\phi}^n$ on $\{u<v\}$, where $0<c_1<1$ and $\phi=(1-a) v + a\rho$. 
 By adding a constant we can assume that $v \geq \rho$ so that $\omega_u^n \leq c_1\omega_{\phi}^n$ on $\{u<\phi\}$. 
	
	We now fix   $b>1$ large and consider $\varphi := P_{\omega}(b u -(b-1)\phi)$. 
	It follows from	Theorem \ref{thm:orthog}  that $\varphi$ is bounded on $X$ and $\omega_{\varphi}^n$ is supported 
	on the contact set $\mathcal{C}:=\{\varphi= bu - (b-1)\phi\}$. Since $b^{-1}\varphi + (1-b^{-1}) \phi \leq u$ 
	with equality on the contact set, Lemma \ref{lem:Demkey} yields
	\[
	{\bf 1}_{\mathcal{C}} b^{-n} \omega_{\varphi}^n + {\bf 1}_{\mathcal{C}}(1-b^{-1})^n \omega_{\phi}^n \leq {\bf 1}_{\mathcal{C}} \omega_u^n. 
	\]
	Thus for $b>1$ large enough   $\omega_{\varphi}^n$ vanishes in $\mathcal{C}\cap \{u<\phi\}=\mathcal{C}\cap \{\varphi<\phi\}$, 
	hence on $\{\varphi<\max(\varphi,\phi)\}$ since $\omega_{\varphi}^n$ is supported on $\mathcal{C}$. 
	The domination principle ensures that $\varphi \geq \phi$. We infer   $u\geq \phi$ since 
	\[
	\varphi =P_{\omega}(bu-(b-1)\phi)) \leq bu-(b-1)\phi.
	\]  
	Thus $u\geq (1-a) v + a \rho$ and letting $a\to 0^+$ yields the conclusion. 
	\end{proof}

Here is a useful consequence of the domination principle. 

  \begin{coro} \label{cor:unique}
 Assume $\omega$ is non-collapsing and 
 let $u,v$ be bounded 
 $\omega$-psh functions.  If
 $$
  (\omega+dd^c u)^n \leq \tau  (\omega+dd^c v)^n 
 $$
 for some constant $\tau>0$, then $\tau \geq 1$.
 
 In particular  if  $(\omega+dd^c u)^n=c \mu$
 and $(\omega+dd^c v)^n =c' \mu$ for the same measure $\mu$,
 then $c=c'$.
 \end{coro}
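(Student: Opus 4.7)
First I observe that the uniqueness statement is an immediate consequence of the first claim. Indeed, if $(\omega+dd^c u)^n=c\mu$ and $(\omega+dd^c v)^n=c'\mu$, then $(\omega+dd^c u)^n = (c/c')(\omega+dd^c v)^n$, so by the first part $c/c' \geq 1$; swapping the roles of $u$ and $v$ gives $c'/c \geq 1$, hence $c=c'$. Thus the whole content of the corollary is the first assertion, which I will reduce to the domination principle.

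To prove $\tau \geq 1$, I argue by contradiction: suppose $\tau \in [0,1)$. The key observation is that the inequality $(\omega+dd^c u)^n \leq \tau (\omega+dd^c v)^n$ is invariant under replacing $v$ by $v_s := v+s$ for any real number $s$, since $\omega_{v_s}^n = \omega_v^n$. Using that both $u$ and $v$ are \emph{bounded}, I choose $s$ large enough so that
\[
u < v_s = v+s \quad \text{on all of } X,
\]
i.e.\ the comparison set $\{u < v_s\}$ is all of $X$. On this set the hypothesis reads $\omega_u^n \leq \tau\, \omega_{v_s}^n$ with $\tau \in [0,1)$, which is precisely the hypothesis of the domination principle. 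Applying it yields $u \geq v_s = v+s$ on $X$, a direct contradiction with $u < v+s$ pointwise. Hence $\tau \geq 1$.

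\textbf{Main obstacle.} The point requiring care is to invoke a domination principle valid under the mere non-collapsing assumption, whereas Proposition~\ref{prop: domination principle unbounded} is stated assuming $\omega$ is big (i.e.\ requires a function $\rho$ with analytic singularities dominating a hermitian form). However, since the functions $u,v_s$ here are already bounded, no auxiliary $\rho$ is needed: I expect the argument of Proposition~\ref{prop: domination principle unbounded} to go through verbatim after replacing the test function $\phi = (1-a)v + a\rho$ simply by $\phi = v_s$. Concretely, one considers the bounded $\omega$-psh envelope $\varphi := P_{\omega}\bigl(bu - (b-1)v_s\bigr)$ for $b>1$ large; Theorem~\ref{thm:orthog} gives $\omega_\varphi^n$ concentrated on the contact set $\{\varphi = bu - (b-1)v_s\}$, and Lemma~\ref{lem:Demkey} combined with the hypothesis $\omega_u^n \leq \tau\, \omega_{v_s}^n$ forces $\omega_\varphi^n$ to vanish on $\{\varphi < v_s\}$ once $b$ is chosen with $b^{-n} + (1-b^{-1})^n > \tau$. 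The non-collapsing property of $\omega$ then ensures $\varphi \geq v_s$, from which $u \geq v_s$ follows since $\varphi \leq bu - (b-1)v_s$. Alternatively, this bounded version of the domination principle is exactly what is recorded as \cite[Proposition 2.8]{GL21b}, and can be cited directly.
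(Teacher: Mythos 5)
Your proof is correct and is essentially the paper's own one-line argument: for $\tau<1$ one applies the domination principle for \emph{bounded} $\omega$-psh functions under the non-collapsing hypothesis (this is exactly \cite[Proposition 2.8]{GL21b}, which the paper invokes) to $u$ and $v+s$, obtaining $u\geq v+s$ for every $s$ and hence a contradiction with boundedness. The only quibble is in your optional re-derivation via the envelope $P_{\omega}(bu-(b-1)v_s)$: the condition forcing $\omega_{\varphi}^n$ to vanish should be $(1-b^{-1})^n\geq \tau$ rather than $b^{-n}+(1-b^{-1})^n>\tau$, but since $\tau<1$ this still holds for $b$ large, so nothing breaks.
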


\begin{proof}
 If $\tau<1$ the domination principle
 yields $u\geq v+C$ for any constant $C$, a contradiction. 
\end{proof}

The same result holds when $u,v$ are mildly singular:

\begin{coro}\label{cor:unique2}
Assume $\omega$ is big and
fix $\rho$ an $\omega$-psh function with analytic singularities such that $\omega+dd^c \rho \geq \delta \omega_X$, with $\delta >0$. 
	Let $u,v$ be $\omega$-psh functions such that for all $\e>0$,
	$$
	\inf_X (\min(u,v) - \varepsilon \rho) >-\infty. 
	$$
	\begin{enumerate}
	\item If $(\omega+dd^c u)^n=\tau (\omega+dd^c v)^n$ 
	then $\tau=1$.
	\item If $e^{-\lambda v} (\omega+dd^c v)^n \geq e^{-\lambda u} (\omega+dd^c u)^n$ for some $\lambda>0$, then $v \leq u$.
	\end{enumerate}
	\end{coro}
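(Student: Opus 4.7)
Both statements should follow directly from Proposition \ref{prop: domination principle unbounded}, by reducing to a situation where one has a strict inequality $\omega_u^n \leq c\, \omega_{v'}^n$ on the contact locus $\{u<v'\}$ with $c<1$, for a suitably modified $v'$. The key observation is that if $v$ satisfies the integrability condition $\inf_X(\min(u,v)-\varepsilon \rho) > -\infty$ for all $\varepsilon>0$, then so does $v+C$ for any constant $C\in\R$, and the Monge–Amp\`ere measure $\omega_{v+C}^n = \omega_v^n$ is unchanged. Hence the unbounded domination principle can be applied to the pair $(u,v+C)$ (or $(u,v-\e)$) just as in the proof of Corollary \ref{cor:unique}.

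For part (1), I would argue by contradiction. Assume $\tau \neq 1$; up to interchanging the roles of $u$ and $v$ (which replaces $\tau$ by $\tau^{-1}$) we may suppose $\tau<1$. For any constant $C\in\R$, the measure $\omega_{v+C}^n=\omega_v^n$, so the hypothesis reads $\omega_u^n = \tau\, \omega_{v+C}^n$ globally on $\{\rho>-\infty\}$, hence in particular on $\{u<v+C\}\cap\{\rho>-\infty\}$, with $\tau<1$. By Proposition \ref{prop: domination principle unbounded}, $u \geq v+C$ on $X$. Letting $C\to +\infty$ gives the desired contradiction.

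For part (2), fix $\varepsilon>0$ arbitrary and set $v_\varepsilon := v-\varepsilon$. The integrability assumption is preserved and $\omega_{v_\varepsilon}^n = \omega_v^n$. The hypothesis $e^{-\lambda v}\omega_v^n \geq e^{-\lambda u}\omega_u^n$ rewrites, on $\{\rho>-\infty\}$, as
\[
\omega_u^n \leq e^{\lambda(u-v)}\, \omega_v^n.
\]
On the set $\{u < v_\varepsilon\}\cap\{\rho>-\infty\} = \{u<v-\varepsilon\}\cap\{\rho>-\infty\}$ we have $u-v \leq -\varepsilon$, so
\[
\omega_u^n \leq e^{-\lambda \varepsilon}\, \omega_{v_\varepsilon}^n
\]
with $c := e^{-\lambda\varepsilon} \in [0,1)$. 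Proposition \ref{prop: domination principle unbounded} applied to the pair $(u, v_\varepsilon)$ yields $u \geq v-\varepsilon$ on $X$. Letting $\varepsilon \to 0^+$ gives $v \leq u$, as claimed.

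There is no real obstacle here beyond keeping track of the integrability hypothesis under the translations $v\mapsto v+C$ and $v\mapsto v-\varepsilon$, which is immediate since these modifications only shift $v$ by a constant. The technical content is entirely encapsulated in Proposition \ref{prop: domination principle unbounded}; the corollary is a clean formal consequence obtained by exploiting the invariance of the Monge–Amp\`ere measure under translation by constants and the strict inequality $e^{-\lambda\varepsilon}<1$.
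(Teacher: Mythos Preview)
Your proof is correct and follows essentially the same approach as the paper: both parts reduce directly to Proposition \ref{prop: domination principle unbounded} by translating $v$ by a constant to force a strict multiplicative gap $c<1$ on the relevant sublevel set. Your argument for (2) is line-for-line the paper's (with $\varepsilon$ in place of $\delta$), and your treatment of (1) spells out what the paper leaves implicit by pointing to Corollary \ref{cor:unique}.
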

	
	\begin{proof}
	The proof of (1) is similar to that of Corollary \ref{cor:unique}, so we focus on (2)
 whose proof follows again from Proposition \ref{prop: domination principle unbounded}:
 fix $\delta>0$ and observe that in $\{u<v-\delta\} \cap \{\rho >-\infty\}$ we have
 $$
 (\omega+dd^c u)^n \leq e^{\lambda (u-v)}  (\omega+dd^c v)^n
 \leq e^{-\lambda \delta}  (\omega+dd^c (v-\delta))^n,
 $$
 with $c=e^{-\lambda \delta}<1$.
 Thus $v-\delta \leq u$ and the conclusion follows by letting $\delta \rightarrow 0$.
\end{proof}

\section{Uniform a priori estimates}

\subsection{Global $L^{\infty}$-bounds}

\subsubsection{Hermitian forms}
 
  When $\omega$ is a hermitian form and $\mu$ is a smooth volume form, it has been shown
     by Tosatti-Weinkove \cite{TW10} that there exists a unique
     $c>0$ and a unique smooth sup-normalized $\omega$-psh function $\f$ such that
     $$
     (\omega+dd^c \f)^n =c \mu.
     $$
     This landmark result relies on several previous attempts to generalize Yau's result to the hermitian setting, notably
     by Cherrier \cite{Cher87}, Hanani \cite{Han96} and Guan-Li \cite{GL10}.
     The key result that was missing and provided by \cite{TW10} is an a priori $L^{\infty}$-estimate for the solution.
          An alternative a priori estimate using pluripotential techniques has been provided by Dinew-Ko{\l}odziej in \cite{DK12},
     who treated the case when $\mu=f dV_X$ with $f \in L^p$, $p>1$.

     \begin{theorem} [Dinew-Ko{\l}odziej] \label{thm:globaluniformhermitian}
     Assume $\omega$ is a hermitian form, $p>1$ and $f \in L^p(dV_X)$ is such that
     $A^{-1} \leq \left(\int_X f^{\frac{1}{n}} dV_X\right)^n \leq \left(\int_X f^p dV_X \right)^{\frac{1}{p}}\leq A$ for some   $A>1$.
     If $c>0$ and $u \in \PSH(X,\omega) \cap L^{\infty}(X)$ are such that 
     $(\omega+dd^c u)^n=c fdV_X$, then
     $$
     c+c^{-1}+{\rm Osc}_X(u) \leq T,
     $$
     where the  constant $T$ only depends on $p,n,\omega$ and an upper bound for $A$.
     \end{theorem}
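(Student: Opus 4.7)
The plan is to adapt the envelope-based $L^{\infty}$ strategy of \cite{GL21a} to the hermitian setting, using Lemma~\ref{lem:GLkey} together with the non-collapsing estimate of Theorem~\ref{thm:pcpr}. The proof splits naturally into two-sided bounds on $c$ and a separate oscillation estimate.

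For the bounds on $c$, the hermitian integration by parts produces a mass inequality $\int_X \omega_u^n \leq C(\omega,\omega_X)\int_X \omega^n$, while H\"older's inequality applied to the probability measure $dV_X$ gives
\[
\int_X f\,dV_X \geq \left(\int_X f^{1/n}\,dV_X\right)^n \geq A^{-1}.
\]
Combining these with the equation $c\int_X f\,dV_X = \int_X \omega_u^n$ yields an upper bound $c \leq T_1(A,\omega,\omega_X,n)$. For the lower bound on $c$, Theorem~\ref{thm:pcpr} furnishes $\int_X \omega_u^n \geq v_-(\omega) > 0$ (since $\omega$ is hermitian, hence non-collapsing), while $\int_X f\,dV_X \leq \|f\|_{L^p} \leq A$, giving $c \geq v_-(\omega)/A > 0$.

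For the oscillation, normalize $\sup_X u = 0$; the goal is to produce a uniform $T$ with $u \geq -T$ on $X$. By $L^1$-compactness of the sup-normalized class $\{v \in \PSH(X,\omega) : \sup_X v = 0\}$, together with Skoda's uniform integrability of $e^{-\alpha v}$ on this class, there exist uniform constants $K,\alpha > 0$ with $dV_X(\{u < -t\}) \leq K e^{-\alpha t}$. For each $t > 0$ large I would construct an auxiliary bounded $\omega$-psh test function $\vartheta_t$ as an envelope $\vartheta_t = P_\omega(\chi_t \circ u + h_t)$, where $\chi_t : \R^- \to \R^-$ is a concave increasing weight with $\chi_t(0) = 0$ and $\chi_t'(0) = 1$, and $h_t$ is a shift or auxiliary $\omega$-psh reference chosen so that Lemma~\ref{lem:GLkey} (applied with suitable $\phi,\varphi$) controls the mass of $\omega_{\vartheta_t}^n$ by a multiple of $\int_{\{u<-t/2\}} \omega_u^n$. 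Then, using Lemma~\ref{lem:GLkey}, H\"older's inequality with exponent $p$, and the Skoda decay, one obtains
\[
\int_X \omega_{\vartheta_t}^n \;\leq\; C \,\|f\|_{L^p}\, \bigl(K e^{-\alpha t/2}\bigr)^{1-1/p}.
\]
On the other hand, non-collapsing forces $\int_X \omega_{\vartheta_t}^n \geq v_-(\omega) > 0$, contradicting the above bound as soon as $t$ exceeds a uniform threshold $T = T(p,n,\omega,\omega_X,A)$.

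The main technical obstacle lies in the construction of $\vartheta_t$, and specifically in arranging that the right-hand side of Lemma~\ref{lem:GLkey} genuinely decays with $t$. Because the weight $\chi$ in the lemma is required to be concave with $\chi'(0) \geq 1$, its derivative $\chi'$ is automatically bounded below by $1$ throughout $\R^-$, so no pointwise decay of $(\chi')^n$ is available; the only source of smallness is the indicator $\mathbf{1}_{\{P_\omega(\psi)=\psi\}}$ of the contact set. One must therefore calibrate $\chi_t$ and the reference $h_t$ so that this contact set concentrates in the deep sublevel $\{u < -t/2\}$, thereby suppressing the contribution of the bulk $\{u \geq -t/2\}$ where $\omega_u^n$ carries mass of order one. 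This concentration is precisely where the envelope method of \cite{GL21a} replaces the capacity iteration of Ko\l{}odziej; verifying it rigorously, and checking that it is compatible with the hermitian torsion (where integration by parts is delicate but avoided here), is the heart of the argument. Once this step is in place, H\"older's inequality and Skoda's decay close the loop in a routine way.
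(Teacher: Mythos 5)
Your argument has a genuine gap at its foundation: both the lower bound on $c$ and the final contradiction in your oscillation argument invoke $v_-(\omega)>0$, i.e.\ \emph{uniform} non-collapsing, for a hermitian form. This is not available. Theorem~\ref{thm:pcpr} only yields non-collapsing in the weak sense that each individual mass $\int_X\omega_u^n$ is positive (the lower bound there is $\int_{\{u<\rho+m+s\}}\omega_\rho^n$, a quantity that depends on $u$ and can be arbitrarily small); the paper explicitly states that condition (B) -- satisfied by hermitian metrics -- is only known to imply non-collapsing, and that uniform non-collapsing is merely \emph{expected}. Positivity of $v_-(\omega)$ is precisely the extra hypothesis of Theorem~\ref{thm:uniformhermitian}, and the entire point of stating and proving Theorem~\ref{thm:globaluniformhermitian} separately is to handle the hermitian case \emph{without} that hypothesis. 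A second, independent error is your claimed mass bound $\int_X\omega_u^n\le C(\omega,\omega_X)\int_X\omega^n$ ``by hermitian integration by parts'': this is the statement $v_+(\omega)<+\infty$, which is not known in general and is treated as a hypothesis in \cite{GL21b}. The correct route to the upper bound on $c$ uses only the mixed mass $\int_X\omega_u\wedge\omega_X^{n-1}$ (a single integration by parts, controlled by the $L^1$-compactness of normalized $\omega$-psh functions) combined with Lemma~\ref{lem:AGM}. Finally, the construction of $\vartheta_t$, which you yourself flag as the heart of the matter, is left unexecuted, so even modulo the two issues above the oscillation estimate is not proved.

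For comparison, the paper's proof avoids $v_-(\omega)$ entirely and does not use the envelope machinery of Lemma~\ref{lem:GLkey} here. Step~1 builds, for any normalized density $g\in L^p$, a \emph{bounded subsolution} $v$ with $(\omega+dd^cv)^n\ge m\,g\,dV_X$ and $\mathrm{Osc}_X v\le M$, by solving local Dirichlet problems $(dd^cv_j)^n=g\,dV_X$ on a finite cover of coordinate balls (Ko{\l}odziej's local $L^\infty$-estimate), gluing via $\max(v_j,\lambda_j\rho_j)$, and averaging. Step~2 applies this with $g=e^{-\varepsilon u}f$, which lies in $L^{p'}$ with controlled norm by Skoda's uniform integrability; comparison of $(\omega+dd^cv)^n$ with $(\omega+dd^cu)^n=cf\,dV_X$ via Corollary~\ref{cor:unique} gives the lower bound on $c$, and then the twisted inequality $(\omega+dd^cv)^n\ge e^{\varepsilon(v-u-C)}cf\,dV_X$ together with the exponential domination principle (Corollary~\ref{cor:unique2}(2)) yields $u\ge v-C$, hence the oscillation bound. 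If you want to salvage an envelope-style proof, you would first have to supply a uniform positive lower bound for the Monge--Amp\`ere masses you compare against, and that is exactly what the subsolution of Step~1 provides.
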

     
     The proof by Dinew-Ko{\l}odziej is a non trivial extension of the
     pluripotential approach developed by Ko{\l}odziej in the K\"ahler case \cite{Kol98},
     bypassing extra difficulties coming from the non closedness of $\omega$.
     We provide a direct proof of this result here,
     that only relies on  local resolutions of
     Monge-Amp\`ere equations.
     
     \begin{proof}
     
     {\it Step 1. Constructing a bounded subsolution.}
     We claim that there exist uniform constants $0<m=m(p,\omega)<M=M(p,\omega)$ such that for any
   $0 \leq g \in L^p$ with $\int_X g^p dV_X \leq 1$, we can
   find $v \in \PSH(X,\omega) \cap L^{\infty}(X)$ such that
   $$
   (\omega+dd^c v)^n \geq m g dV_X
   \; \; \text{ and } \; \;
   {\rm Osc}_X v \leq M.
   $$
     
    Consider indeed a finite double cover of $X$ by small "balls" $B_j,B_j'=\{\rho_j<0\}$, with $B_j \subset \subset B_j'$
    which are bounded in a local holomorphic chart. Here $\rho_j: X \rightarrow \R$ denotes a smooth function
  which is strictly plurisubharmonic in a neighborhood of $B_j'$.
    We solve $(dd^c v_j)^n=gdV_X$ in $B_j' $ with $-1$ boundary values.
    It follows from \cite{Kol98} that  the plurisubharmonic solution $v_j$ is uniformly bounded in $B_j'$.
   Considering $\max(v_j,\lambda_j \rho_j)$ we can choose  $\lambda_j >1$ and
    obtain a uniformly bounded function $w_j$ with the following properties:
    \begin{itemize}
    \item $w_j$ coincides with $v_j$ in $B_j$ where it satisfies $ (dd^c w_j)^n = g dV_X$;
    \item $w_j$ is plurisubharmonic in $B_j'$ and uniformly bounded;
    \item $w_j$ coincides with $\lambda_j \rho_j$ in $X \setminus B_j'$ and in a neighborhood of $\partial B_j'$.
    \end{itemize}
    As  $w_j$ is smooth where it is not plurisubharmonic, its curvature 
    $dd^c w_j$ is bounded below by $-\delta^{-1} \omega$ for some uniform $\delta>0$.
    Thus $\delta w_j$ is $\omega$-psh and
    $
    v:=\frac{\delta}{N} \sum_{j=1}^N w_j
    $
    is the bounded subsolution we are looking for, since in $B_j$ we obtain
    $$
    (\omega+dd^c v)^n \geq \frac{\delta^n}{N^n} (\omega+dd^c w_j)^n
    \geq \frac{\delta^n}{N^n} ( dd^c w_j)^n= \frac{\delta^n}{N^n} gdV_X.
    $$
     
     \medskip
     
 \noindent      {\it Step 2. Uniform a priori bounds.}
 We can normalize $u$ by $\sup_X u=0$.
 It follows from Skoda's uniform integrability  
(see \cite[Theorem 8.11]{GZbook}) that one can find $\e>0$, $p'=p'(\e,p) \in (1,p)$, 
and $C=C({\e},p)>0$ independent of $u$ such that
$g=e^{-\e u} f \in L^{p'}$ with
$$
||g||_{p'} \leq ||f||_p \cdot ||e^{-\e \frac{p}{p-p'}u}||_{\frac{p}{p-p'}} \leq C({\e},p) A.
$$

Let $v$ be the bounded subsolution provided by Step 1 for the density
$\frac{g}{||g||_{p'}}$, i.e.
$$
(\omega+dd^c v)^n \geq m \frac{g}{||g||_{p'}} dV_X \geq \frac{m'}{A} f dV_X=\frac{m'}{Ac} (\omega+dd^c u)^n,
$$
using that $u \leq 0$ hence $g \geq f$.   
It follows from Corollary \ref{cor:unique} that $c \geq m'/A$.
Note that the upper bound for $c$ follows easily from Lemma \ref{lem:AGM}.

\smallskip
 
 We finally observe that the uniform bound on $v$ also provides a uniform bound for $u$.
 Indeed since $v$ is bounded we obtain
 $$
 (\omega+dd^c v)^n \geq m'' e^{-\e u} fdV_X \geq e^{\e(v-u-C)} cfdV_X,
 $$
 hence Corollary \ref{cor:unique2} ensures that $u \geq v-C$.
     \end{proof}

     
     

     \subsubsection{Semi-positive forms}
     
     We now extend this key $L^{\infty}$-estimate to the case
     when the form $\omega$ is not necessarily positive, assuming
    instead that $v_-(\omega)>0$:

   \begin{theorem}  \label{thm:uniformhermitian}
   Let $\omega$ be semi-positive with $v_-(\omega)>0$.
   Let $\mu$ be a probability measure  such that
  $\PSH(X,\omega) \subset L^m(\mu)$ for some $m>n$.
Any solution $\f \in \PSH(X,\omega) \cap L^{\infty}(X)$  to 
 $(\omega+dd^c \f)^n=c \mu$, where $c>0$, satisfies
 $$
 {\rm Osc}_X (\f) \leq  T
 $$
 for some uniform constant $T$ which depends on  
 upper bounds for $\frac{c}{v_-(\omega)}$ and  
 $$
A_m({\mu}):= \sup \left\{ \left(\int_X (-\p)^m d\mu \right)^{\frac{1}{m}} \; : \;  \p \in \PSH(X,\omega) \text{ with } \sup_X \p=0 \right\}.
 $$
 \end{theorem}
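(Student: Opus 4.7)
I would normalize $\sup_X \varphi = 0$ and aim to prove a uniform lower bound $\varphi \geq -T$, equivalently that $E_s := \{\varphi < -s\}$ is empty for $s$ above a uniform threshold. The strategy transports the envelope method of \cite{GL21a} to the hermitian setting, using only Lemmas~\ref{lem:Demkey}, \ref{lem:GLkey} and Theorem~\ref{thm:orthog}, none of which require closedness of $\omega$. The first ingredient is immediate: the hypothesis $\int_X(-\psi)^m d\mu \leq A_m(\mu)^m$ applied to $\psi = \varphi$ yields the Chebyshev decay $\mu(E_s) \leq s^{-m}A_m(\mu)^m$, and the fact that $m > n$ gives decay strictly faster than $s^{-n}$, which will drive the final estimate.

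The core of the argument is an envelope construction via Lemma~\ref{lem:GLkey}. For each $s > 0$, set $\phi_s := \max(\varphi, -s)$, bounded $\omega$-psh, with $\varphi \leq \phi_s$ and $\varphi - \phi_s$ supported in $E_s$ with negative values. Fix a concave increasing weight $\chi : \R^- \to \R^-$ with $\chi'(0) = 1$ whose growth at $-\infty$ is tuned so that $(\chi')^n \in L^{m/n}(\mu)$; this is possible since $m > n$, taking e.g.\ $\chi$ with $\chi'(t) = (1-t)^{\beta}$ for a small $\beta > 0$. Setting $\psi_s := \phi_s + \chi \circ (\varphi - \phi_s)$, the function $u_s := P_{\omega}(\psi_s)$ is bounded $\omega$-psh and Lemma~\ref{lem:GLkey} gives
\[
(\omega + dd^c u_s)^n \leq \mathbf{1}_{\{u_s = \psi_s\}}\,\bigl(\chi'(\varphi - \phi_s)\bigr)^n c\mu.
\]
Since $u_s \in \PSH(X,\omega) \cap L^{\infty}(X)$, uniform non-collapsing yields $v_-(\omega) \leq \int_X(\omega + dd^c u_s)^n$. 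Combining the two inequalities, and decomposing into $X \setminus E_s$ (where $\chi'(\varphi - \phi_s) = \chi'(0) = 1$) and $E_s$ (where the contribution is estimated by H\"older against $(\chi')^n \in L^{m/n}(\mu)$ together with the Chebyshev bound), one extracts an inequality of the form
\[
v_-(\omega) \leq c \cdot C(A_m(\mu), m, n) \cdot \mu(E_s)^{1 - n/m} \leq c \cdot C'(A_m(\mu), m, n) \cdot s^{-(m-n)},
\]
which forces $s \leq T$ for a uniform $T$ depending only on $c/v_-(\omega)$, $A_m(\mu)$, $m$ and $n$.

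The main obstacle is the contact-set analysis on the complement $X \setminus E_s$: since $\psi_s = \phi_s = \varphi$ there and $\chi'(0) = 1$, a naive bound yields a contribution $c\mu(X \setminus E_s) \leq c$ which alone may saturate the lower bound $v_-(\omega) \leq c$ coming from $\int \omega_\varphi^n = c \geq v_-(\omega)$. The gain must therefore come from strict concavity of $\chi$ combined with Theorem~\ref{thm:orthog}, which together ensure that $u_s$ is strictly smaller than $\varphi$ on a non-negligible portion of $X \setminus E_s$, shrinking the effective contact set; alternatively, one upgrades the estimate by a De~Giorgi-type iteration on $s$ as in \cite{GL21a}. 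Once the contact-set gain is secured, the decay exponent $m - n > 0$ closes the loop and delivers the claimed oscillation bound.
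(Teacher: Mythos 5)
There is a genuine gap, and you have in fact located it yourself without closing it. With the truncated construction $\phi_s=\max(\f,-s)$, the function $\p_s$ coincides with $\f$ on $\{\f\geq -s\}$, so nothing prevents the contact set $\{u_s=\p_s\}$ from containing essentially all of $\{\f\geq -s\}$; indeed, if $E_s=\emptyset$ then $\p_s=\f$, $u_s=\f$, and the contact set is all of $X$. The contribution of $X\setminus E_s$ to the right-hand side is therefore genuinely of size $c\mu(X\setminus E_s)\leq c$, and the resulting inequality $v_-(\omega)\leq c$ is vacuous (it already follows from $\int_X\omega_\f^n=c$). Your proposed repairs do not work: strict concavity of $\chi$ is irrelevant on $\{\f\geq -s\}$ where $\f-\phi_s\equiv 0$, so it cannot shrink the contact set there; and the raw Chebyshev bound $\mu(E_s)\leq s^{-m}A_m(\mu)^m$ uses only the normalization of $\f$, not the equation, so no single-scale argument built on it alone can yield an $L^\infty$ bound. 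The "De Giorgi-type iteration" you invoke is not what \cite{GL21a} does either.

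The actual mechanism is different in two essential ways. First, there is no truncation: one takes $\p=\chi\circ\f$ globally and $u=P_\omega(\p)$, and the weight $\chi$ is chosen \emph{adaptively}, via $[\chi'(-t)]^{n+2\e}{}'=\big((1+t)^2\mu(\f<-t)\big)^{-1}$, precisely so that $\int_X(\chi'\circ\f)^{n+2\e}d\mu\leq 2$ while $\chi$ still grows as fast as the distribution function of $\f$ permits. Second, the hypothesis $v_-(\omega)>0$ is not used to derive a contradiction of the form "$v_-(\omega)\leq$ small"; it is used to convert the energy estimate $\int_X(-u)^\e\omega_u^n/c\leq 2A_m(\mu)^\e$ into a lower bound on $\sup_X u$ (since $(-\sup_Xu)^\e v_-(\omega)/c\leq\int_X(-u)^\e\omega_u^n/c$), hence into a uniform bound on $\|u\|_{L^m(\mu)}$ and so on $\|\chi\circ\f\|_{L^m(\mu)}$, since $u\leq\chi\circ\f\leq 0$. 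Chebyshev applied to $\chi\circ\f$ (not to $\f$) then gives $\mu(\f<-t)\leq\tilde A|\chi(-t)|^{-m}$, which combined with the defining ODE for $\chi$ yields a differential inequality for $h(t)=-\chi(-t)$ whose integration forces $T_{\max}<+\infty$ with a uniform bound. You would need to rebuild your argument around this adaptive choice and the energy-to-sup step; the fixed weight $\chi'(t)=(1-t)^\beta$ and the level-by-level decomposition cannot be made to close.
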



Since any quasi-psh function belongs  to $L^r(dV_X)$ for all $r>1$,
  this theorem  applies   to measures $\mu=fdV_X$, where $f \in L^p$ with $p>1$,
  as follows from H\"older inequality.
  As in   \cite[Theorem 2.5.2]{Kol98} our technique also covers the case of more general densities.
We refer the reader to \cite[Section 2.2]{GL21a} for more details.

 \begin{proof}
 The proof is very similar to that of \cite[Theorem 2.1]{GL21a}.
 Shifting by an additive constant we  can assume   $\sup_X \f=0$. Set 
 \[
 T_{\max}:= \sup \{t>0 \; : \; \mu(\f <-t) >0\}. 
 \]
 Our goal is to establish a precise bound on $T_{max}$.
    By definition,  $-T_{max} \leq \f$ almost everywhere with respect to $\mu$, 
    hence everywhere by the domination principle, 
    providing the desired a priori bound    ${\rm Osc}_X (\f) \leq T_{max}$.

%

 We let $\chi:\R^- \rightarrow \R^-$ denote a {\it concave} increasing function
 such that $\chi(0)=0$ and $\chi'(0) = 1$. 
 We set $\p=\chi \circ \f$, $u=P(\p)$ and observe that
$$
\omega+dd^c \p = \chi' \circ \f \omega_\f+[1-\chi' \circ \f] \omega+\chi'' \circ \f d\f \wedge d^c \f 
\leq  \chi' \circ \f \omega_\f.
$$
It follows from Lemma \ref{lem:GLkey} that
 $$
\frac{1}{c} (\omega+dd^c u)^n  
 \leq 1_{\{u=\p\}}  (\chi' \circ \f)^n \mu.
 $$

 \smallskip
 
\noindent  {\it Controlling the energy of $u$}.
We fix $\e>0$ so small that 
$
n<n+3\e =m.
$
 The concavity of $\chi$ and the normalization $\chi(0)=0$ yields $|\chi(t)| \leq |t| \chi'(t)$. 
 Since $u=\chi \circ \f$ on the support of $(\omega+dd^c u)^n$, H\"older's inequality yields
 \begin{eqnarray*}
 \int_X (-u)^{\e} \frac{(\omega+dd^c u)^n }{c}  
 &\leq &  \int_X (-\chi \circ \f)^{\e} (\chi' \circ \f)^n d\mu   
 \leq   \int_X (- \f)^{\e} (\chi' \circ \f)^{n+\e} d\mu   \\
 & \leq &  \left( \int_X (-\f)^{n+2\e} d\mu \right)^{\frac{\e}{n+2\e}} 
 \left( \int_X ( \chi' \circ \f)^{n+2\e} d\mu \right)^{\frac{n+\e}{n+2\e}} \\
 & \leq & A_m(\mu)^{\e}  \left( \int_X ( \chi' \circ \f)^{n+2\e} d\mu \right)^{\frac{n+\e}{n+2\e}} 
 \end{eqnarray*}
  using that $\f$ belongs to the set of $\omega$-psh functions $v$  normalized by $\sup_X v=0$ 
 which is compact in $L^{n+2\e}(\mu)$.
 
  \smallskip
 
\noindent  {\it Controlling the norms $||u||_{L^m}$}.
We are going to choose below the weight $\chi$ in such a way that
$\int_X ( \chi' \circ \f)^{n+2\e} d\mu =B \leq 2$ is a finite constant under control.
This provides a uniform lower bound on $\sup_X u$ as we now explain:
indeed
\begin{eqnarray*}
0 \leq  (-\sup_X u)^{\e} \, \frac{v_-(\omega)}{c} 
&\leq & (-\sup_X u)^{\e} \int_X  \frac{(\omega+dd^c u)^n }{c} \\
&\leq & \int_X (-u)^{\e} \frac{(\omega+dd^c u)^n }{c}  \\
&\leq & 2 A_m(\mu)^{\e}
\end{eqnarray*}
 yields
 \begin{equation*} 
  -\left( \frac{2c }{v_-({\omega})} \right)^{1/\e} A_m(\mu) \leq \sup_X u \leq 0.
 \end{equation*}

We infer that  $u$ belongs to a compact set of $\omega$-psh functions, hence 
$$
 ||u||_{L^m(\mu)} 
 \leq \left[1+\left( \frac{2c }{v_-({\omega})} \right)^{1/\e} \right] A_m({\mu})=:\tilde{A}^m.
 $$
From $u \leq \chi \circ \f \leq 0$ we get $||\chi \circ \f||_{L^m} \leq ||u||_{L^m} \leq \tilde{A}^m$. 
Using Chebyshev inequality we thus obtain
\begin{equation} \label{eq:clef}
 {\mu }(\f<-t) \leq \frac{\tilde{A}}{|\chi|^m(-t)}.
\end{equation}

   \smallskip
 
\noindent  {\it Choice of $\chi$}.
Recall that if $g: \R^+\rightarrow \R^+$  is   increasing with $g(0)=1$, then
$$
\int_X g \circ (-\f) d\mu = \mu(X) + \int_0^{T_{\max}} g'(t) {\mu }(\f<-t) dt.
$$
Setting $g(t)=[\chi' (-t)]^{n+2\e}$  we define $\chi$ by imposing $\chi(0)=0$, $\chi'(0)=1$, and 
$$
g'(t)=
	\frac{1}{(1+t)^2 {\mu }(\f<-t)} \; \text{ if}\;  t <T_{max}.
$$
This choice guarantees that $\chi$ is concave increasing with $\chi' \geq 1$ on $\f(X)$, and
$$
\int_X ( \chi' \circ \f)^{n+2\e} d\mu \leq  \mu(X) + \int_0^{+\infty} \frac{dt}{(1+t)^2}\leq 2.
$$
 A slight technical point is that one should first work on a compact subinterval $[0,T']$ with $0<T'<T_{max}$
 and then let $T'$ tend to $T_{max}$. We refer the reader to the proof of 
 \cite[Theorem 2.1]{GL21a} for more details on this twist.

\smallskip

 \noindent  {\it Conclusion}.  We set $h(t)=-\chi(-t)$ and work with the positive counterpart of $\chi$. 
 Note that $h(0)=0$ and $h'(t)=[g(t)]^{\frac{1}{n+2\e}}$ is positive increasing, hence
$h$ is convex.
Observe also that $g(t) \geq g(0)=1$ hence $h'(t) =[g(t)]^{\frac{1}{n+2\e}} \geq 1$ yields
\begin{equation} \label{eq:minh(1)}
h(1)=\int_0^1 h'(s) ds \geq 1.
\end{equation}

Together with \eqref{eq:clef} our choice of $\chi$ yields, for all $t\in [0,T_{max})$, 
$$
\frac{1}{(1+t)^2g'(t)}={\mu }(\f<-t) \leq \frac{\tilde{A}}{h^m(t)}.
$$
This reads
$$
h^m(t) \leq \tilde{A} (1+t)^2 g'(t)=(n+2\e) \tilde{A} (1+t)^2 h''(t)  (h')^{n+2\e-1}(t).
$$
Multiplying  by $h'$,  integrating between $0$ and $t$, 
we infer that for all $t \in [0,T_{max})$,
\begin{eqnarray*}
\frac{h^{m+1}(t)}{m+1}
&\leq&   (n+2\e)  \tilde{A}   \int_0^t (1+s)^2 h''(s)  (h')^{n+2\e}(s)\\
&\leq & \frac{(n+2\e) \tilde{A} (t+1)^2 }{n+2\e+1} \left ((h')^{n+2\e+1}(t) - 1 \right)  \\
&\leq&    \tilde{A}    (1+t)^2   (h')^{n+2\e +1}(t)  .
\end{eqnarray*}
Recall that we have set   $m=n+3 \e$ so that 
$$
\alpha:=m+1= (n+2\e +1)+\e> \beta:= n+2\e +1>2.
$$
The previous inequality then reads
$
 {(1+t)^{-\frac{2}{\beta}}} \leq C  {h'(t)}{h(t)^{-\frac{\alpha}{\beta}}},
$
for some uniform constant $C$ depending on $n,m,\tilde{A}$.
Since $\alpha>\beta>2$ and $h(1) \geq 1$ (by \eqref{eq:minh(1)}), 
integrating the above inequality between $1$ and $T_{max}$ we obtain 
$
T_{max} \leq C', 
$
for some uniform constant $C'$ depending on $C,\alpha,\beta$. 
 \end{proof}

\subsubsection{Stability estimate}

Adapting similarly the proof   of \cite[Theorem 2.4]{GL21a}, we  also obtain:
 
   \begin{theorem}  \label{thm:stabilityhermitian}
   Let $\omega,\mu$ be as in Theorem \ref{thm:uniformhermitian}. 
Let $\f \in \PSH(X,\omega) \cap L^{\infty}(X)$  be such that $\sup_X \varphi=0$ and
 $(\omega+dd^c \f)^n=c \mu$ for some $c>0$.  Then 
 $$
\sup_X (\phi-\varphi)_+ \leq  T \left (\int_X (\phi-\varphi)_+d\mu \right)^{\gamma},
 $$
for any $\phi \in \PSH(X,\omega)\cap L^{\infty}(X)$, where $\gamma=\gamma(n,m)>0$
and $T$
 is a uniform constant which depends on 
upper bounds for $\frac{c}{v_-(\omega)}$,
  $||\phi ||_{L^{\infty}}$, and
 $$
A_m({\mu}):= \sup \left\{ \left(\int_X (-\p)^m d\mu\right)^{\frac{1}{m}} \; : \; \p \in \PSH(X,\omega) \text{ with } \sup_X \p=0 \right\}.
 $$
 \end{theorem}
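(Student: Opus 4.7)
The plan is to adapt the envelope-plus-weight machinery of Theorem \ref{thm:uniformhermitian}, following the strategy of \cite[Theorem 2.4]{GL21a}: the $L^1(\mu)$-smallness of $(\phi-\varphi)_+$ is converted into an $L^\infty$-bound by focusing the energy estimate on the sublevel sets $D_{s+t}:=\{\phi-\varphi>s+t\}$. Normalise $\sup_X \varphi=0$, set $M:=\sup_X(\phi-\varphi)_+$ and $a:=\int_X(\phi-\varphi)_+\,d\mu$; we may assume $M>0$. Chebyshev gives the crude bound $\mu(D_t)\leq a/t$ for $t>0$, and the goal is to upgrade this to $M\leq T\,a^\gamma$.

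For a concave increasing weight $\chi:\R^-\to\R^-$ with $\chi(0)=0$ and $\chi'(0)=1$ (chosen later) and a parameter $s\in(0,M)$, set $v:=\max(\varphi,\phi-s)$, $\tilde\psi:=v+\chi\circ(\varphi-v)$ and $u:=P_\omega(\tilde\psi)$. Lemma \ref{lem:GLkey}, applied with $f=\varphi$ and $v$ in the role of the comparison function, yields
\[
(\omega+dd^c u)^n\leq \mathbf{1}_{\{u=\tilde\psi\}}\,(\chi'\circ(\varphi-v))^n\,c\,d\mu,
\]
with $\chi'\circ(\varphi-v)\equiv 1$ outside $D_s$ and $\chi'\circ(\varphi-v)=\chi'(\varphi-\phi+s)\geq 1$ on $D_s$. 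Fixing $\varepsilon>0$ with $n+3\varepsilon=m$, the energy computation of Theorem \ref{thm:uniformhermitian} (Hölder, compact embedding $\{w\in\PSH(X,\omega):\sup_X w=0\}\hookrightarrow L^{n+2\varepsilon}(\mu)$, and $v_-(\omega)>0$) produces
\[
(-\sup_X u)^\varepsilon\,\frac{v_-(\omega)}{c}\leq A_m(\mu)^\varepsilon \Bigl(\int_X(\chi'\circ(\varphi-v))^{n+2\varepsilon}\,d\mu\Bigr)^{(n+\varepsilon)/(n+2\varepsilon)}.
\]

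We then select $\chi$ so that $g(t):=(\chi'(-t))^{n+2\varepsilon}$ satisfies $g(0)=1$ and $g'(t)=[(1+t)^2\mu(D_{s+t})]^{-1}$ on $(0,M-s)$; a layer-cake computation forces the right-hand integral above to be at most $2$, hence $\sup_X u\geq -\tilde C$ and, by compactness, $\|u\|_{L^m(\mu)}\leq\tilde A$ with $\tilde C,\tilde A$ depending only on the stated quantities. Setting $h(t):=-\chi(-t)\geq 0$, the bound $u\leq\tilde\psi\leq\phi-s-h(t)$ on $D_{s+t}$, together with $|\phi|\leq\|\phi\|_\infty$ and Chebyshev applied to $u$, gives, once $h(t)$ is sufficiently large,
\[
\mu(D_{s+t})\leq \frac{\tilde A^m}{(h(t)+s-\|\phi\|_\infty)^m}.
\]
Substituting back into $g'(t)=[(1+t)^2\mu(D_{s+t})]^{-1}$ and using $g=(h')^{n+2\varepsilon}$ produces a differential inequality for $h$ identical in form to the one in the proof of Theorem \ref{thm:uniformhermitian}; integrating it over $(t_0,M-s)$ bounds $M-s$ uniformly, while the Chebyshev input $\mu(D_{s+t})\leq a/(s+t)$ controls the initial segment of the integration and injects $a$ into the constants. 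Optimising $s$ to balance these two contributions yields $M\leq T\,a^\gamma$ for some $\gamma=\gamma(n,m)>0$.

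The main obstacle is this last step: the differential inequality must be integrated so that the dependence on $a$ is genuinely \emph{polynomial} rather than merely finite, and the free parameter $s$ must be chosen appropriately (typically a small positive power of $a$) to balance the $L^m$-tail bound on $u$ against the Chebyshev bound $\mu(D_s)\leq a/s$. The envelope construction, the energy estimate, and the weight choice otherwise carry over verbatim from the proof of Theorem \ref{thm:uniformhermitian}.
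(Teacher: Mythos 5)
Your proposal follows exactly the route the paper intends (the text gives no details and simply says to adapt the proof of \cite[Theorem 2.4]{GL21a}): the envelope $u=P_{\omega}\bigl(v+\chi\circ(\varphi-v)\bigr)$ with $v=\max(\varphi,\phi-s)$, Lemma \ref{lem:GLkey}, the energy estimate, the weight adapted to $t\mapsto\mu(D_{s+t})$, the $L^m$--Chebyshev bound, and the resulting differential inequality are all the correct adaptation of Theorem \ref{thm:uniformhermitian}. The step you flag as the main obstacle does go through as you suspect: the Chebyshev input $\mu(D_{s+t})\le a/(s+t)$ gives $g(t)\ge \frac{s}{a}\cdot\frac{t}{1+t}$, hence $h(t)\gtrsim (s/a)^{1/(n+2\varepsilon)}\,t^{1+1/(n+2\varepsilon)}$ for small $t$, so the integrated inequality $(1+t)^{-2/\beta}\le C\,h'(t)\,h(t)^{-\alpha/\beta}$ can be started at a time $t_1$ equal to a small positive power of $a/s$ at which $h(t_1)$ is already large (in particular larger than $2\|\phi\|_{L^\infty}$, which is needed for the Chebyshev bound on $u$ to be effective), yielding $M-s\le t_1+C(a/s)^{\gamma'}$ and then $M\le T\,a^{\gamma}$ upon taking $s$ a suitable power of $a$.
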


 If $\phi \in \PSH(X,\omega) \cap L^{\infty}(X)$ also satisfies $(\omega+dd^c \phi)^n=c' \mu'$ for some $c'>0$
 and $\mu'$ having similar properties as that of $\mu$, the above result yields an
 $L^1-L^{\infty}$-stability estimate,
 $$
||\phi-\f||_{L^{\infty}(X)} \leq  M ||\phi-\varphi||_{L^1(\mu+\mu')}^{\gamma}.
 $$
 Thus  if a sequence of such solutions $\f_j$  converges in $L^1(\mu)$, this should allow one to conclude that
 it actually uniformly converges. We shall use this strong information on several occasions in the sequel.

\subsection{Relative  $L^{\infty}$-bounds} \label{sec:relative1}

 We consider in this section the degenerate complex Monge-Amp\`ere equation 
 \begin{equation} \label{eq:geom}
  (\omega+dd^c \f)^n=c   fdV_X,
 \end{equation} 
where  
$0 \leq f \in L^1(X)$.
It follows from abstract measure theoretic arguments that  $f$ belongs to an Orlicz class
$L^w(dV_X)$ 
for some convex increasing weight $w:\R^+ \rightarrow \R^+$
such that $\frac{w(t)}{t} \rightarrow +\infty$ as $t \rightarrow +\infty$.
The Luxemburg norm
$$
||f||_w:=\inf \left\{ r>0 \; : \;  \int_X w \left( \frac{f(x)}{r} \right) dx <1 \right\}
$$
is finite if and only if
$\int_X w \circ f \, dV_X <+\infty$.

If the weight $w$ grows fast enough at infinity
(e.g. $w(t)=t^p$ with $p>1$, or $w(t)=t (\log t)^m $ with $m>n$),
then   one can expect the solution  to be uniformly bounded
(see \cite[Theorem 2.5.2]{Kol98}).
We assume here  this is {\it not} the case.

\smallskip


When   $f \leq e^{-\p}$ for some quasi-psh function $\p$,
and $X$ is K\"ahler, it has been shown by DiNezza-Lu 
 \cite[Theorem 2]{DnL17} that the normalized solution $\f$ to \eqref{eq:geom} is locally bounded in
the complement of the pluripolar set $\{\p=-\infty\}$.
We extend this important result here to the  hermitian setting:

\begin{thm} \label{thm:c0relative}
Assume $v_-(\omega)>0$ and fix $A>0$. 
Fix   $\mu=fdV_X$ a probability measure with
$f \in L^w(dV_X)$ and assume
$f \leq g e^{-A\p}$, where $g \in L^p(dV_X)$ for some $p>1$ and  $\p \in \PSH(X, \omega)$.
If $\f$ is a bounded $\omega$-psh function
such that    $(\omega+dd^c \f)^n=c f dV_X$ and $\sup_X \f=0$, then
$$
\p-\beta\leq \f \leq 0
$$
where $\beta>0$ depends on $p$, $w$ and upper bounds for $c,v_-(\omega)^{-1},||g||_{L^p}, \|f\|_w, A$.
\end{thm}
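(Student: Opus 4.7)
The plan is to adapt the proof of Theorem \ref{thm:uniformhermitian} (Theorem A) to the relative setting, using the envelope machinery of Lemma \ref{lem:GLkey} to directly bound the volume $\mu(\varphi < \psi - t)$ in place of $\mu(\varphi < -t)$.

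After normalizing $\sup_X \psi = 0$ (the shift being absorbed into the eventual $\beta$), introduce for each $j \in \N$ the bounded $\omega$-psh truncation $\psi_j := \max(\psi, -j)$ and set $\phi_j := \max(\varphi, \psi_j)$, a bounded $\omega$-psh function satisfying $\varphi \leq \phi_j$. For a concave increasing weight $\chi: \R \to \R$ with $\chi(0) = 0$ and $\chi'(0) = 1$, to be chosen, consider the envelope
\[
u_j := P_{\omega}\bigl(\phi_j + \chi \circ (\varphi - \phi_j)\bigr).
\]
Since $\varphi - \phi_j$ is bounded, $u_j$ is a bounded $\omega$-psh function. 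Lemma \ref{lem:GLkey} combined with $(\omega+dd^c \varphi)^n = c f dV_X$ and the hypothesis $f \leq g e^{-A\psi}$ yields
\[
(\omega + dd^c u_j)^n \leq \mathbf{1}_{\{u_j = \phi_j + \chi \circ (\varphi - \phi_j)\}}(\chi' \circ (\varphi - \phi_j))^n\, c g e^{-A\psi}\, dV_X.
\]

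Following the structure of the proof of Theorem \ref{thm:uniformhermitian}, one estimates $\int_X (-u_j)^{\varepsilon}(\omega + dd^c u_j)^n$ via H\"older's inequality and the compactness of $\PSH(X,\omega)$ in $L^r$, deduces a uniform $L^m$ bound on $u_j$ with respect to $\mu = fdV_X$ for some $m>n$, and then applies Chebyshev to the function $-u_j$ (which dominates $-\chi \circ (\varphi - \psi_j)$ on $\{\varphi < \psi_j\}$) to get a decay $\mu(\varphi < \psi_j - t) \leq C/h^m(t)$ with $h = -\chi(-\cdot)$ convex. Choosing $\chi$ as in the proof of Theorem \ref{thm:uniformhermitian}, so that the auxiliary energy $\int_X (\chi'\circ(\varphi - \phi_j))^{n+2\varepsilon} g e^{-A\psi}\, dV_X$ stays bounded by a constant depending only on the data $(p, w, A, \|g\|_{L^p}, \|f\|_w, c, v_-(\omega)^{-1})$, one concludes that $\mu(\varphi < \psi_j - \beta) = 0$ for $\beta$ exceeding an explicit uniform constant. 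The bounded domination principle (applied to $\varphi$ and the bounded function $\psi_j - \beta$) upgrades this $\mu$-almost everywhere inequality to the pointwise statement $\varphi \geq \psi_j - \beta$, and letting $j \to \infty$ gives the desired $\varphi \geq \psi - \beta$.

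The principal obstacle is establishing uniform boundedness of the auxiliary energy integral independently of $\|\varphi\|_{\infty}$. This is where the Orlicz hypothesis $\|f\|_w < \infty$ enters critically, in conjunction with the $L^p$ integrability of $g$ and the uniform Skoda-Zeriahi integrability of $\omega$-psh functions normalized by $\sup = 0$: the Orlicz bound provides the extra integrability needed when $A$ exceeds the naive Skoda threshold, so that the envelope construction effectively replaces $f$ by an $L^p$-density and the final $\beta$ depends only on the stated data.
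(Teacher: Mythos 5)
Your overall strategy --- running the proof of Theorem \ref{thm:uniformhermitian} relative to $\phi_j=\max(\f,\p_j)$ via Lemma \ref{lem:GLkey} --- founders at the step you yourself flag as ``the principal obstacle'', and the Orlicz hypothesis does not rescue it there. The Chebyshev/ODE mechanism of Theorem \ref{thm:uniformhermitian} requires a controlled bound on $\|u_j\|_{L^m(\mu)}$ for some $m>n$, i.e.\ on the moments $A_m(\mu)$ of the measure $\mu=fdV_X$ itself: it is against $\mu$ that you must apply Chebyshev, because both the self-referential choice of $\chi$ (through $g'(t)=(1+t)^{-2}\mu(\f<\p_j-t)^{-1}$) and the final domination step refer to the measure appearing in the equation $\omega_{\f}^n=c\mu$. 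But under the hypotheses of Theorem \ref{thm:c0relative} such a bound is exactly what is unavailable: if $A_m(\mu)$ were controlled by $\|f\|_w$ and the other data, Theorem \ref{thm:uniformhermitian} would already give a \emph{global} bound on ${\rm Osc}_X(\f)$ and the relative statement would be superfluous. Concretely, H\"older--Young gives $\int_X(-v)^m f\,dV_X\le\int_X w(f)\,dV_X+\int_X w^*((-v)^m)\,dV_X$, and for a slow weight $w$ the Legendre transform $w^*$ grows too fast for the second integral to be finite, let alone uniform over normalized $v\in\PSH(X,\omega)$. Likewise your auxiliary energy $\int_X(\chi'\circ(\f-\phi_j))^{n+2\e}ge^{-A\p}dV_X$ cannot be forced below a uniform constant by the choice of $\chi$ alone, since $e^{-A\p}$ fails to be integrable against the relevant powers as soon as $A\,\nu(\p,\cdot)$ exceeds the Skoda threshold; the same factor contaminates the energy step, where $(-u_j)^{\e}=(-\phi_j-\chi\circ(\f-\phi_j))^{\e}$ on the contact set brings in uncontrolled moments of $-\p_j$ against $\mu$.

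The paper's proof is organized precisely to sidestep this. Step 1 exploits the Orlicz bound differently: with $\chi_1=-(w^*)^{-1}(-\cdot)$ one gets $\int_X(-\chi_1\circ\f)\,\omega_{\f}^n\le C_0$ by H\"older--Young, then the envelope $\Phi=P_{\omega}(\chi\circ\f)$ together with Lemma \ref{lem:GLkey} and $v_-(\omega)>0$ places $\Phi$ in a compact family, and Skoda's theorem yields the uniform exponential integrability $\int_Xe^{-\lambda\f}dV_X\le M_{\lambda}$ for \emph{every} $\lambda>0$. Step 2 is the substitution trick announced in the introduction: for $u=P_{\omega}(2\f-\p)$ one has $e^{-A\p}=e^{Au}e^{-2A\f}$ on the contact set, whence $(\omega+dd^cu)^n\le 2^n c\,e^{A\sup_Xu}g e^{-2A\f}dV_X$ with $ge^{-2A\f}\in L^{p'}$ by Step 1; Theorem \ref{thm:uniformhermitian} then applies to $u$, whose right-hand side now does satisfy the moment condition, and $2\f\ge u+\p$ concludes. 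Without incorporating both ideas --- trading $e^{-A\p}$ for $e^{-2A\f}$ via an envelope, and controlling $e^{-2A\f}$ via the Orlicz weight --- the direct relative adaptation of the ODE argument cannot reach measures outside the moment condition of Theorem \ref{thm:uniformhermitian}.
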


Proceeding by approximation, we are going to use in the sequel this lower bound
to ensure that some approximating solutions $\f_j$ are uniformly bounded from below
by a fixed function $\psi-\beta$. In particular any cluster point
$\f$ of the $\f_j$'s will be locally bounded in $X \setminus \{\p=-\infty\}$.
Replacing $A$ and $\psi$ by $A/\alpha$ and $\alpha \psi$, with $0<\alpha \leq 1$, and increasing $\beta$
 in the statement, one obtains
\begin{equation*} 
\alpha \p-C_{\alpha}\leq \f \leq 0.
\end{equation*}
An interesting consequence of this improved a priori estimate is that it produces limiting functions $\f$ having zero Lelong number at all points. 

\medskip

The proof of DiNezza-Lu   makes use of 
a theory of generalized Monge-Amp\`ere capacities further developed in \cite{DnL15}.
It seems quite delicate to extend their arguments to the hermitian setting, as they rely
on very precise integration by parts which produce several extra terms when $\omega$ is not closed.
Our new approach allows us to bypass these difficulties.

  \begin{proof}
\noindent  {\it Uniform integrability of $\f$}.
Set $\chi_1(t):=-(w^*)^{-1}(-t)$, where $w^*$ denotes the Legendre transform of $w$.
Thus $\chi_1:\R^- \rightarrow \R^-$  is a convex increasing,
such that $\chi_1(-\infty)=-\infty$ and
$$
\int_X (-\chi_1 \circ \f) (\omega+dd^c \f)^n 
\leq c\int_X w \circ f dV_X+c\int_X (-\f) dV_X \leq C_0,
$$
as follows from the additive version of H\"older-Young inequality
and the compactness of $\sup$-normalized $\omega$-psh functions.
We set, for $t \leq 0$,
$$
\chi(t)=-\int_t^0 \sqrt{(-\chi_1)(s)} ds
\; \; \text{ and } \; \;
\chi_2(t)=-\sqrt{(-\chi_1)} \circ \chi^{-1}(t).
$$
The weight $\chi$ is concave increasing with $\chi(-\infty)=-\infty$ and $\chi'(-\infty)=+\infty$,
while $\chi_2$ is convex increasing with $\chi_2(-\infty)=-\infty$.
Set $\Phi=P_{\omega}(\chi \circ \f) \leq \chi \circ \f \leq 0$.
It follows from Lemma \ref{lem:GLkey} that
\begin{eqnarray*}
\int_X (-\chi_2) \circ \Phi \,  (\omega+dd^c \Phi)^n 
&\leq&  \int_X (-\chi_2 \circ \chi \circ \f) \, \chi' \circ \f \, (\omega+dd^c \f)^n  \\
&=&  \int_X (-\chi_1 \circ \f) (\omega+dd^c \f)^n  \leq C_0.
\end{eqnarray*}
 Since $v_-(\omega)>0$ this provides a uniform bound on $\sup_X \Phi$ depending on $C_0$, $v_-(\omega)$ and $\chi_2$, so that
 $\Phi$ belongs to a compact family of $\omega$-psh functions.
 It follows from Skoda's uniform integrability result 
 that there exist uniform constants $\alpha,C>0$ such that
 $$
 \int_X \exp(-\alpha \chi \circ \f) dV_X \leq \int_X \exp(-\alpha \Phi) dV_X \leq C <+\infty.
 $$
 Finally since $\chi(t)$ grows faster than $t$ at infinity, we infer that for any $\lambda >0$, 
 \begin{equation} \label{eq:SKodaDnL2}
\int_X \exp(- \lambda \f) dV_X \leq M_{\lambda}
\end{equation}
for a uniform constant $M_{\lambda}$ which depends  on  upper bounds for $||f||_w$ and $v_-(\omega)^{-1}$.

    \smallskip
 
   \noindent  {\it Quasi-psh envelope}.
   Consider $u:=P_{\omega}(2\f-\p)$. 
   One can show that $\sup_X u \leq C_0$ is uniformly bounded from above by generalizing \cite[Theorem 9.17.1]{GZbook}
   to the hermitian setting.  The previous argument also shows that $\sup_X P_{\omega}(2\f)$ is uniformly bounded from below, hence the same holds for $P_{\omega}(2\f-\psi)$ as we can assume without loss of generality that $\psi \leq 0$.  
   
   Since $\varphi$ is bounded, we have $u= P_{\omega}(2\varphi -\max(\psi,-t))$ for $t>0$ large enough. We can thus assume that $\psi$ is also bounded. 
   Since $2\f-\p$ is bounded and quasi continuous, it follows from Theorem \ref{thm:orthog}
   that the Monge-Amp\`ere measure
   $(\omega+dd^c u)^n$ is concentrated on the contact set 
   ${\mathcal C}=\{u=2\f-\p\}$. We claim that 
     $$
   (\omega+dd^c u)^n   \leq 1_{{\mathcal C}} 2^n (\omega+dd^c \f)^n.
   $$
   Indeed consider $v=u+\p$. This is a $(2\omega)$-psh function such that
   $v \leq 2\f$; it follows therefore from Lemma \ref{lem:Demkey} that
   $$
   1_{\{v=2\f \}} (2\omega+dd^c v)^n \leq 1_{\{v=2\f \}} (2\omega+dd^c (2\f))^n
   $$
   The claim follows since $\{v=2\f \}={\mathcal C}$ and $(\omega+dd^c u)^n \leq (2\omega+dd^c v)^n$.  Here the boundedness of $\psi$ ensures that the Monge-Amp\`ere measure $(\omega+dd^c v)^n$ is well-defined. 
   Now $(\omega+dd^c \f)^n=c fdV_X \leq cge^{-A\p}dV_X$, with
   $$
   1_{{\mathcal C}} g e^{-A\p}=1_{{\mathcal C}} g e^{Au} e^{-2A\f}
   \leq e^{A \sup_X u} ge^{-2A\f}.
   $$

  Using H\"older inequality and \eqref{eq:SKodaDnL2}, we infer that $g'=ge^{-2A\f} \in L^{p'}$, for some $p'>1$
   arbitrarily close to $p$, and with an upper bound on   $||g'||_{L^{p'}}$  that only depends
   on  $w$ and upper bounds for $A,||g||_{L^p}$, and $v_-(\omega)^{-1}$.
   
   Since $(\omega+dd^c u)^n \leq 2^n c e^{AC_0} g' dV_X$,
   we can invoke Theorem \ref{thm:uniformhermitian} to ensure
    that the oscillation of $u$ is uniformly bounded.
  The desired lower bound follows since
 $2\f = (2\f-\p) +\p \geq u+\p$.
   \end{proof}

 \section{Existence of locally bounded solutions}
 
 We now show the existence of solutions to degenerate complex Monge-Amp\`ere equations
under various positivity assumptions on $\omega$.
We  first establish a general existence result for bounded solutions in section \ref{sec:globallybdd},
then treat the case of solutions that are merely locally bounded in a Zariski open set in 
section \ref{sec:bdd}.
Higher regularity of these solutions will be studied  in section \ref{sec:reg}.

\subsection{Bounded solutions} \label{sec:globallybdd}

 \subsubsection{The normalizing constant}

Fix $p>1$ and $0\leq f \in L^p(X,dV_X)$ with $\int_X fdV>0$.
It follows from  \cite[Theorem 0.1]{KN15} and \cite[Corollary 0.2]{N16} that
for each $\varepsilon>0$, 
   there exists a unique constant $c_{\varepsilon}=c(\omega+\varepsilon \omega_X,f) >0$ 
   such that  one can find $ u \in \PSH(X,\omega+\varepsilon\omega_X) \cap L^{\infty}(X)$ satisfying
\[
(\omega+\varepsilon \omega_X +dd^c u)^n =c_{\varepsilon} fdV.
\]

 Observe  that $\e \mapsto c_{\varepsilon}$ is increasing.
Indeed assume $\varepsilon>\varepsilon'$ and $u_{\varepsilon}, u_{\varepsilon'}$ are bounded solutions to the corresponding equations. Then 
\begin{eqnarray*}
(\omega+ \varepsilon \omega_X +dd^c u_{\varepsilon'})^n 
&\geq  & (\omega+ \varepsilon' \omega_X +dd^c u_{\varepsilon'})^n \\
&= & c_{\varepsilon'} fdV = \frac{c_{\varepsilon'}}{c_{\varepsilon}} (\omega+\varepsilon \omega_X +dd^c u_{\varepsilon})^n.
\end{eqnarray*}
It follows therefore from Corollary \ref{cor:unique} that $c_{\varepsilon}\geq c_{\varepsilon'}$. 
We can thus consider:

\begin{defi} \label{def:normalizing}
We set
$
c(\omega,f):= \lim_{\varepsilon \to 0} c_{\varepsilon}.
$
\end{defi}

This definition is independent of the choice of the hermitian metric $\omega_X$:

\begin{prop} \label{prop: MA constant}
	 The constant $c(\omega,f)$ does not depend on 
	 $\omega_X$.  Moreover
	 $$
	 c(\omega,f)=\inf \{ c(\omega',f) \; : \;  \omega' \text{  hermitian form such that } \omega'>\omega \}.
	 $$
\end{prop}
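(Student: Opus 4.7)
The plan is to reduce both assertions to the following monotonicity fact: if $\omega_1 \leq \omega_2$ are two hermitian forms, then $c(\omega_1, f) \leq c(\omega_2, f)$. I would prove this by repeating the computation carried out in the paragraph preceding Definition \ref{def:normalizing}: pick bounded solutions $u_i$ of $(\omega_i + dd^c u_i)^n = c(\omega_i, f) f dV_X$, and use $\omega_1 \leq \omega_2$ to obtain
$$
(\omega_2 + dd^c u_1)^n \geq (\omega_1 + dd^c u_1)^n = c(\omega_1, f) f dV_X = \frac{c(\omega_1, f)}{c(\omega_2, f)} (\omega_2 + dd^c u_2)^n.
$$
Then Corollary \ref{cor:unique}, applicable because every hermitian form is non-collapsing by Theorem \ref{thm:pcpr}, forces the ratio $c(\omega_1, f)/c(\omega_2, f) \leq 1$.

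Granted this monotonicity, for the independence statement I would fix two hermitian forms $\omega_X, \omega_X'$ and choose $C \geq 1$ by compactness of $X$ so that $C^{-1} \omega_X' \leq \omega_X \leq C \omega_X'$. For every $\varepsilon > 0$ the chain
$$
\omega + \tfrac{\varepsilon}{C} \omega_X' \;\leq\; \omega + \varepsilon \omega_X \;\leq\; \omega + \varepsilon C \omega_X'
$$
together with monotonicity yields a matching sandwich of normalizing constants. Letting $\varepsilon \to 0$, the two outer terms converge to the value $c(\omega,f)$ obtained using $\omega_X'$, while the middle term converges to the value obtained using $\omega_X$, so the two coincide.

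For the infimum formula, the inequality $c(\omega,f) \geq \inf\{c(\omega',f) : \omega' > \omega\}$ is immediate since each form $\omega + \varepsilon \omega_X$ ($\varepsilon > 0$) is hermitian and strictly larger than $\omega$, hence is admissible in the infimum. For the opposite inequality I would fix any hermitian $\omega' > \omega$; compactness of $X$ produces $\delta > 0$ with $\omega' - \omega \geq \delta \omega_X$, so for $0 < \varepsilon \leq \delta$ monotonicity gives $c(\omega + \varepsilon \omega_X, f) \leq c(\omega', f)$, and letting $\varepsilon \to 0$ yields $c(\omega, f) \leq c(\omega', f)$; taking the infimum over $\omega'$ concludes the proof.

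The argument is essentially mechanical once monotonicity is in place. The only point that deserves a moment's care is checking the hypothesis of Corollary \ref{cor:unique} for the reference form playing the role of $\omega_2$; this is guaranteed because every hermitian form is non-collapsing by Theorem \ref{thm:pcpr}, so no serious obstacle arises.
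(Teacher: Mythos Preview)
Your proof is correct and essentially identical to the paper's own argument: the same monotonicity via Corollary~\ref{cor:unique}, the same sandwich with a comparability constant for the independence statement, and the paper dispatches the infimum assertion with ``follows similarly'' where you have spelled out both inequalities. Your explicit verification that the non-collapsing hypothesis of Corollary~\ref{cor:unique} is met (via Theorem~\ref{thm:pcpr}, since hermitian forms are trivially big) is a nice touch that the paper leaves implicit.
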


\begin{proof}
	Fix $\omega_1, \omega_2$  two hermitian metrics on $X$. 
	Then $A^{-1}\omega_1 \leq \omega_2 \leq A \omega_1$ for some positive constant $A$. It follows that, for all $\varepsilon>0$, 
	\[
	(\omega+A^{-1}\varepsilon \omega_1) \leq  (\omega+ \varepsilon \omega_2) \leq  (\omega+ A\varepsilon \omega_1). 
	\]
	Corollary \ref{cor:unique} ensures that
	\[
	c(\omega+A^{-1}\varepsilon \omega_1, f) \leq c(\omega+\varepsilon \omega_2, f)  \leq c(\omega+A\varepsilon \omega_1, f). 
	\]
	Letting $\varepsilon\to 0$ yields  the conclusion. 
	The last assertion follows similarly.
\end{proof}



\subsubsection{Existence of solutions}

\begin{lemma}
	\label{lem: subsol big}
	 Fix $p>1$ and assume  either $\omega$ is big or $v_{-}(\omega)>0$. Then there exists $0<c=c(p,\omega)$ such that for 
	 any $f\in L^p(dV_X)$ with $\int_X f^p dV_X \leq 1$, we can find $u\in \PSH(X,\omega)\cap L^{\infty}(X)$ such that 
\[
(\omega+dd^c u)^n \geq c fdV_X
\; \; \text{and} \; \;
 -1\leq u \leq 0.
\]
\end{lemma}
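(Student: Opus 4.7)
My plan is to construct the required subsolution by approximating the semi-positive form $\omega$ by hermitian forms $\omega_\varepsilon := \omega + \varepsilon\,\omega_X$, using the hermitian existence theory of Ko\l odziej-Nguyen to solve the Monge-Amp\`ere equation for each $\varepsilon$, and then passing to the limit $\varepsilon \to 0^+$. The semi-positivity of $\omega$ will then be crucially exploited at the end to renormalize the limit into the prescribed range $-1 \leq u \leq 0$.

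For each $\varepsilon > 0$ the form $\omega_\varepsilon$ is hermitian, and \cite{KN15,N16} produces a unique $c_\varepsilon > 0$ and $v_\varepsilon \in \PSH(X,\omega_\varepsilon) \cap L^\infty(X)$, normalized by $\sup_X v_\varepsilon = 0$, solving $(\omega_\varepsilon + dd^c v_\varepsilon)^n = c_\varepsilon\, f\, dV_X$. The crux is to show that $c_\varepsilon$ is bounded away from $0$ and from $\infty$, and that $\mathrm{osc}_X(v_\varepsilon) \leq M$, uniformly in small $\varepsilon$. The upper bound on $c_\varepsilon$ is immediate from Lemma \ref{lem:AGM}. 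For the lower bound on $c_\varepsilon$: under the hypothesis $v_-(\omega) > 0$ I would check that $v_-(\omega_\varepsilon)$ remains uniformly positive for small $\varepsilon$; under the hypothesis that $\omega$ is big I would apply Theorem \ref{thm:pcpr} to $\omega_\varepsilon$ using the fixed function $\rho$, observing that $\omega_\varepsilon + dd^c\rho \geq \omega + dd^c\rho \geq \delta\omega_X$ and that the constants $B_1, \delta$ in condition (B) can be chosen uniformly in $\varepsilon$, so the sublevel-set estimate there yields $\int_X(\omega_\varepsilon + dd^c v_\varepsilon)^n \geq \eta\,\delta^n \int_X \omega_X^n$. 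Once these positive uniform bounds are in hand, Theorem \ref{thm:uniformhermitian} applied to $\omega_\varepsilon$ with measure $\mu := f\,dV_X/\|f\|_{L^1}$ provides the uniform oscillation bound: the inclusion $\PSH(X,\omega_\varepsilon) \subset \PSH(X,\omega + \omega_X)$ (valid for $\varepsilon \leq 1$) together with H\"older's inequality and compactness of $\sup$-normalized $(\omega+\omega_X)$-psh functions gives a uniform bound on $A_m(\mu)$ for any $m > n$.

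Compactness of the normalized family lets me extract a subsequence $v_{\varepsilon_k} \to v$ in $L^1(X)$; the limit is $\omega$-psh since $\omega_{\varepsilon_k} + dd^c v_{\varepsilon_k} \geq 0$ and $\omega_{\varepsilon_k} \to \omega$, it satisfies $-M \leq v \leq 0$ (after a harmless shift to restore $\sup_X v = 0$), and Bedford-Taylor convergence for uniformly bounded psh functions yields $(\omega + dd^c v)^n \geq c\, f\, dV_X$ where $c > 0$ is a cluster point of $c_{\varepsilon_k}$. To finally achieve oscillation at most $1$ I use semi-positivity of $\omega$: if $M \leq 1$ set $u := v$; otherwise set $u := v/M$. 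The identity
$$
\omega + dd^c u = \frac{1}{M}(\omega + dd^c v) + \Bigl(1 - \frac{1}{M}\Bigr)\omega \geq 0
$$
shows that $u \in \PSH(X,\omega) \cap L^\infty(X)$ with $-1 \leq u \leq 0$, and the elementary inequality $(A + B)^n \geq B^n$ for semi-positive $(1,1)$-forms $A, B$ gives $(\omega + dd^c u)^n \geq M^{-n}(\omega + dd^c v)^n \geq (c/M^n)\, f\, dV_X$, establishing the lemma with $c(p,\omega) := c/M^n$.

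The main obstacle is the uniform lower bound on $c_\varepsilon$ (equivalently, uniform non-collapsing of $\omega_\varepsilon$) as $\varepsilon \to 0^+$, especially in the $\omega$-big case without a priori knowledge of $v_-(\omega) > 0$: one must carefully verify that the structural constants in Theorem \ref{thm:pcpr} can be chosen independent of $\varepsilon$, and that the sublevel-set volume estimate there produces a positive lower bound on $\int_X(\omega_\varepsilon + dd^c v_\varepsilon)^n$ that does not degenerate with $\varepsilon$. The other delicate point is justifying Bedford-Taylor convergence along the approximating sequence, which requires upgrading the $L^1$-convergence of $v_{\varepsilon_k}$ to convergence in capacity along a further subsequence.
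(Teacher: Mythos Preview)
Your approach coincides with the paper's in the case $v_-(\omega)>0$, but there is a genuine uniformity gap in both cases as you have written it, and for the big case the paper takes an entirely different route that avoids the limiting procedure.

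\textbf{The uniformity problem.} The constant $c(p,\omega)$ must not depend on $f$. Your upper bound on $c_\varepsilon$ via Lemma~\ref{lem:AGM} reads $c_\varepsilon^{1/n}\int_X f^{1/n}\,dV_X\leq C$, which gives no control when $\int f^{1/n}$ is small (take $f$ concentrated on a tiny set with $\|f\|_p=1$). Consequently your oscillation bound from Theorem~\ref{thm:uniformhermitian}, which depends on an upper bound for $c_\varepsilon/v_-(\omega_\varepsilon)$, and your bound on $A_m(\mu)$ with $\mu=f\,dV_X/\|f\|_1$, both degenerate as $\|f\|_1\to 0$. The paper fixes this in the $v_-(\omega)>0$ case by solving $(\omega_j+dd^c u_j)^n=c_j(1+f)\,dV_X$ rather than $c_j f\,dV_X$: the additive $1$ forces $(\omega_X+dd^c u_j)^n\geq c_j\,dV_X$, so Lemma~\ref{lem:AGM} gives a genuine uniform upper bound on $c_j$, and the probability measure $\mu=(1+f)\,dV_X/\int(1+f)$ has $A_m(\mu)$ controlled by $\|1+f\|_p\leq 2$.

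\textbf{The big case.} Here your plan has a further obstruction beyond the one you flag. Theorem~\ref{thm:uniformhermitian} requires $v_-(\omega_\varepsilon)>0$ uniformly in $\varepsilon$, but bigness of $\omega$ does not provide this (the paper explicitly treats $v_->0$ as a separate hypothesis from bigness). And Theorem~\ref{thm:pcpr} only compares the two Monge--Amp\`ere masses on the sublevel set $\{v_\varepsilon<\rho+m_\varepsilon+s\}$, whose $\omega_\rho^n$-volume has no a priori lower bound, so it does not yield the global estimate $\int_X(\omega_\varepsilon+dd^c v_\varepsilon)^n\geq\eta\delta^n\int_X\omega_X^n$ you claim. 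The paper avoids all of this with a direct construction: it first uses the local Dirichlet solution (Step~1 of Theorem~\ref{thm:globaluniformhermitian}) to produce $v\in\PSH(X,\omega_X)\cap L^\infty$ with $(\omega_X+dd^c v)^n\geq c_1|\rho|^{2n}f\,dV_X$, and then sets $u=-(\delta v+\rho)^{-1}=\chi\circ(\delta v+\rho)$ with $\chi(t)=-1/t$. The convexity of $\chi$ makes $u$ an $\omega$-psh function automatically bounded in $[-1,0]$, and the factor $|\rho|^{2n}$ in the subsolution inequality exactly cancels the $|\delta v+\rho|^{-2n}$ coming from $\chi'(t)^n$, giving $(\omega+dd^c u)^n\geq a f\,dV_X$ with no limit needed. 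This trick of composing an unbounded $\omega$-psh function with $-1/t$ to produce a bounded one is the key idea you are missing.

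For the passage to the limit in the $v_->0$ case, the paper does not use convergence in capacity: it sets $\Phi_j=(\sup_{\ell\geq j}u_\ell)^*$, applies Lemma~\ref{lem:envmin}(ii) to get $(\omega_j+dd^c\Phi_j)^n\geq v_-(\omega)f\,dV_X$, and then uses monotone Bedford--Taylor convergence as $\Phi_j\searrow u$.
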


\begin{proof}
{\it We first assume $\omega$ is big.}
	We fix $\rho\in \PSH(X,\omega)$ with analytic singularities such that
	 $\omega+dd^c \rho \geq \delta \omega_X$ for some $\delta>0$,
	with $\sup_X \rho =-1$. 
	Since $\rho$ belongs to $L^r$ for any $r>1$,   H\"older inequality ensures
	that  $|\rho|^{2n} f  \in L^q(dV_X)$ for some $q>1$. 
	It follows from  the first step 
	of the proof of Theorem \ref{thm:globaluniformhermitian} that there
	 exist a uniform $c_1>0$ and $v\in \PSH(X,\omega_X)\cap L^{\infty}(X)$ such that 
	$ \sup_X v =-1$ and 
	\[
	(\omega_X+dd^c v)^n \geq  c_1|\rho|^{2n} fdV_X.
	\]
	
	Observe that the function $\delta v+\rho$ is $\omega$-psh with 
	$\omega+dd^c (\delta v+\rho) \geq \delta (\omega_X+dd^c v)$.
	Set $u:= -(\delta v+\rho)^{-1}=\chi \circ (\delta v+\rho)$ with $\chi(t)=-t^{-1}$ convex increasing on $\R^-$.
	Our normalizations ensure  $-1 \leq u \leq 0$ and 
	 a direct computation   yields
	\[
	\omega+dd^c u \geq 
	\delta \chi' \circ (\delta v+\rho)   (\omega_X+dd^c v)  = \frac{\delta}{(\delta v+\rho)^{2}} (\omega_X +dd^c v).
	\]
	We infer $\omega_u^n \geq |\delta v+\rho|^{-2n} \delta^n c_1 |\rho|^{2n} fdV_X$.
	 Since $v \leq -1$ is bounded  and $\rho\leq -1$, it follows that $\omega_u^n \geq a fdV_X$ 
	 for some uniform constant $a>0$. 
	 
	 \smallskip

{\it We now assume $v_-(\omega) >0$.} 
Set $\omega_j=\omega+2^{-j} \omega_X$.
Rescaling $\omega_X$ if necessary we can assume that
$\omega \leq \omega_j \leq \omega_X$.
Using \cite[Theorem 0.1]{KN15} we solve 
\[
(\omega_j +dd^c u_j)^n = c_j (1+f) dV_X,\; \sup_X u_j=0
\]
for any fixed $j>0$, where $u_j \in \PSH(X,\omega_j) \cap L^{\infty}(X)$.
Fix $a>0$ such that $\omega_X^n \geq a dV_X$, and observe that
$
(\omega_X +dd^c u_j)^n \geq (\omega_j +dd^c u_j)^n \geq c_j dV_X,
$
hence
$$
(\omega_X +dd^c u_j) \wedge \omega_X^{n-1} \geq c_j^{\frac{1}{n}} a^{\frac{n-1}{n}} dV_X,
$$
as follows from Lemma \ref{lem:AGM}.

We infer that $c_j$ is uniformly bounded from above.
Since $||(1+f)||_p \leq 2$ and $v_-(\omega_j) \geq v_-(\omega)>0$,
it follows from Theorem \ref{thm:uniformhermitian}
that $u_j$ is uniformly bounded.

Extracting and relabelling we can assume that $u_j \rightarrow u \in \PSH(X,\omega)$.
Theorem \ref{thm:uniformhermitian} yields $-T \leq u \leq 0$
for some uniform constant $T >0$.
 Set
$$
\Phi_j:=\left( \sup_{\ell \geq j} u_{\ell} \right)^*. 
$$
By Lemma \ref{lem:envmin}, we have 
\[
(\omega+2^{-j} \omega_X +dd^c \Phi_j)^n \geq v_{-}(\omega) fdV_X.
\]
Since $\Phi_j \searrow u\in \PSH(X,\omega) \cap L^{\infty}(X)$, we obtain $(\omega+dd^c u)^n \geq v_{-}(\omega) fdV_X$. 

If $0 \leq T \leq 1$ we are done, otherwise we consider $\tilde{u}=u/T \in \PSH(X,\omega)$ and observe that
$-1 \leq \tilde{u} \leq 0$ with $(\omega+dd^c \tilde{u})^n \geq T^{-n}v_{-}(\omega) fdV_X$.
\end{proof}

\begin{theorem}\label{thm: bounded sol semipositive}
	Let $\mu=f dV_X$ be a probability measure, where
	$0\leq f\in L^p(dV_X)$ for some $p>1$. 
		Assume either $\omega$ is   big or  $v_{-}(\omega)>0$.
		\begin{enumerate}
		\item For any $\lambda>0$ there exists a unique $\f_{\lambda} \in \PSH(X,\omega)\cap L^{\infty}(X)$ such that 
	\[
	(\omega+dd^c \f_{\lambda})^n = e^{\lambda \f_{\lambda}} f dV_X. 
	\]
	Moreover  $||\f_{\lambda}||_{\infty} \leq C$, where $C$ depends on upper bounds on $\|f\|_p$, $\|f\|_{1/n}^{-1},\lambda^{-1}$. 
	
	\item One has $c(\omega,f)>0$ and there exists $\varphi\in \PSH(X,\omega) \cap L^{\infty}(X)$ such that  
	\[
	(\omega+dd^c \varphi)^n = c(\omega,f)\mu. 
	\]
		\end{enumerate}			
\end{theorem}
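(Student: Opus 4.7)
The plan is to establish both parts by approximating $\omega$ with the hermitian forms $\omega_\varepsilon:=\omega+\varepsilon\omega_X$, $\varepsilon>0$, combining the existence results of Ko{\l}odziej-Nguyen~\cite{KN15} and Nguyen~\cite{N16} in the hermitian setting with the comparison and a priori tools just developed: Theorem~\ref{thm:uniformhermitian}, Theorem~\ref{thm:stabilityhermitian}, Lemma~\ref{lem: subsol big}, and Corollaries~\ref{cor:unique}, \ref{cor:unique2}.

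\smallskip

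\noindent\textbf{Part (1).} For fixed $\lambda>0$ and each $\varepsilon>0$, \cite[Corollary 0.2]{N16} produces a unique bounded $\omega_\varepsilon$-psh solution $\varphi_{\varepsilon,\lambda}$ of $(\omega_\varepsilon+dd^c\varphi_{\varepsilon,\lambda})^n=e^{\lambda\varphi_{\varepsilon,\lambda}}f\,dV_X$, and the first task is to obtain uniform $L^\infty$-bounds. For the upper bound on $\sup_X\varphi_{\varepsilon,\lambda}$ I would integrate: the total Monge-Amp\`ere mass is bounded by $v_+(\omega+\omega_X)<+\infty$, while compactness of sup-normalized quasi-psh functions combined with $f\in L^p$, $p>1$, provides a lower bound on $\int_X e^{\lambda\varphi_{\varepsilon,\lambda}}f\,dV_X$ once $\sup\varphi_{\varepsilon,\lambda}$ grows. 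For the lower bound, take $u$ from Lemma~\ref{lem: subsol big} with $(\omega+dd^c u)^n\geq cf\,dV_X$ and $-1\leq u\leq 0$, and set $v:=u-A$ with $A=A(\lambda,c)$ so large that $e^{-\lambda v}(\omega_\varepsilon+dd^c v)^n\geq e^{-\lambda\varphi_{\varepsilon,\lambda}}(\omega_\varepsilon+dd^c\varphi_{\varepsilon,\lambda})^n$; Corollary~\ref{cor:unique2}(2) then gives $\varphi_{\varepsilon,\lambda}\geq u-A$. With these uniform bounds I would extract an $L^1$-limit $\varphi_\lambda$ along some subsequence $\varepsilon_k\downarrow 0$, use weak closedness of positive currents to obtain $\omega$-pshness of $\varphi_\lambda$, and combine the uniform $L^\infty$-bound with the stability estimate (Theorem~\ref{thm:stabilityhermitian}) to promote convergence to the $L^\infty$-sense, which passes to the Monge-Amp\`ere equation in the limit. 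Uniqueness is then immediate from Corollary~\ref{cor:unique2}(2).

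\smallskip

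\noindent\textbf{Part (2).} That $c(\omega,f)>0$ is immediate: since $\omega_\varepsilon\geq\omega$, Lemma~\ref{lem: subsol big} yields $u$ with $(\omega_\varepsilon+dd^c u)^n\geq c\,f\,dV_X$, so Corollary~\ref{cor:unique} applied to the hermitian normalizing problem forces $c_\varepsilon\geq c$ for every $\varepsilon>0$, hence $c(\omega,f)=\lim_{\varepsilon\downarrow 0}c_\varepsilon\geq c>0$. For existence of a bounded solution I would use part (1) and let $\lambda\downarrow 0$. Setting $\psi_\lambda:=\varphi_\lambda-\sup_X\varphi_\lambda$ and $c_\lambda:=e^{\lambda\sup_X\varphi_\lambda}$, the equation rewrites as
\[
(\omega+dd^c\psi_\lambda)^n=c_\lambda\, e^{\lambda\psi_\lambda}\,f\,dV_X,\qquad \sup_X\psi_\lambda=0.
\]
Integrating, the RHS mass is uniformly bounded above and below (above by controlling $\int_X(\omega+dd^c\psi_\lambda)^n\leq v_+(\omega)+o(1)$, below by $v_-(\omega)>0$ or its bigness substitute from Theorem~\ref{thm:pcpr}), while $\psi_\lambda\leq 0$ gives $\int_X e^{\lambda\psi_\lambda}f\,dV_X\leq 1$ and a Skoda-compactness argument using $f\in L^p$ gives a uniform positive lower bound on this integral. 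These trap $c_\lambda$ in a compact interval $[v_-(\omega),C]$. Theorem~\ref{thm:uniformhermitian} then delivers a uniform $L^\infty$-bound on $\psi_\lambda$, and Theorem~\ref{thm:stabilityhermitian} yields $L^\infty$-convergence of a subsequence to some $\psi$ with $c_{\lambda_j}\to c_\star>0$ and $(\omega+dd^c\psi)^n=c_\star f\,dV_X$. The uniqueness of the normalizing constant (Corollary~\ref{cor:unique}) forces $c_\star=c(\omega,f)$.

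\smallskip

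\noindent\textbf{Main obstacle.} The delicate step is the uniform control of the normalizing constants $c_\varepsilon$ and $c_\lambda$ during the limiting process, since a two-sided bound rests on non-collapsing of $\omega$. In the bigness-only case when $v_-(\omega)>0$ is not a priori available, Theorem~\ref{thm:uniformhermitian} is not directly applicable and one must substitute the relative estimate based on Theorem~\ref{thm:pcpr}, using the reference function $\rho$ realizing bigness to quantify the sublevel-set volumes. Once the normalizing constants are locked in a compact positive interval, the stability estimate replaces the integration-by-parts arguments unavailable in the hermitian setting and guarantees convergence of the Monge-Amp\`ere measures to the desired limit.
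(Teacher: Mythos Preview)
Your overall strategy---perturb to hermitian forms and pass to the limit---matches the paper's, but the mechanisms you invoke at the two crucial steps differ from the paper's, and in the big-only case they leave a real gap.

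\smallskip

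\textbf{Part (1): you miss monotonicity.} The paper observes that the domination principle (Corollary~\ref{cor:unique2}(2)) makes $\varepsilon\mapsto\varphi_{\varepsilon,\lambda}$ \emph{decreasing}: since $(\omega_{\varepsilon'}+dd^c\varphi_\varepsilon)^n\geq e^{\lambda\varphi_\varepsilon}f\,dV_X$ for $\varepsilon'>\varepsilon$, one gets $\varphi_\varepsilon\leq\varphi_{\varepsilon'}$. Hence the upper bound is simply $\varphi_\varepsilon\leq\varphi_1$ (no appeal to $v_+(\omega+\omega_X)$, which is not known to be finite under the stated hypotheses), and convergence is just continuity of the Monge--Amp\`ere operator along a monotone sequence. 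Your route via $v_+$ and the stability estimate is not needed and, as written, is problematic: $v_+$ may be infinite, and Theorem~\ref{thm:stabilityhermitian} applies to a fixed form with $v_-(\omega)>0$, not to the varying $\omega_\varepsilon$ with varying right-hand side $e^{\lambda\varphi_{\varepsilon,\lambda}}f$.

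\smallskip

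\textbf{Passing to the limit in Part (2): envelopes, not stability.} The paper does not use Theorem~\ref{thm:stabilityhermitian} here. After extracting an $L^1$-limit $\varphi$ of the (uniformly bounded) $\varphi_j$, it sandwiches $\varphi$ between the monotone envelopes $\Phi_j^+:=(\sup_{\ell\geq j}\varphi_\ell)^*$ and $\Phi_j^-:=P_\omega(\inf_{\ell\geq j}\varphi_\ell)$, applies Lemma~\ref{lem:envmin} to get one-sided Monge--Amp\`ere inequalities for $\Phi_j^\pm$, and then uses Bedford--Taylor continuity plus the domination principle to conclude $\Phi_j^-\nearrow\varphi$ and $(\omega+dd^c\varphi)^n=c(\omega,f)f\,dV_X$. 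Your proposed use of the stability theorem to upgrade $L^1$- to $L^\infty$-convergence is circular here: the theorem presupposes a solution of the limiting equation, which is precisely what you are trying to produce.

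\smallskip

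\textbf{The big-only case.} This is where your proposal has a genuine gap. Both Theorem~\ref{thm:uniformhermitian} and Theorem~\ref{thm:stabilityhermitian} require $v_-(\omega)>0$, and your ``substitute'' via Theorem~\ref{thm:pcpr} gives only non-collapsing, not the uniform oscillation bound you need. The paper bypasses this entirely: it takes the sup-normalized approximants $\varphi_j$ solving $(\omega_j+dd^c\varphi_j)^n=c_j\mu$, fixes a small $\gamma>0$ so that $g_j:=e^{-\gamma\varphi_j}f$ is uniformly in $L^q$ (Skoda + H\"older), and then invokes \emph{Part~(1) itself} (valid under bigness alone via Lemma~\ref{lem: subsol big}) to solve $(\omega+dd^c u_j)^n=e^{\gamma u_j}g_j\,dV_X$ with $\|u_j\|_\infty\leq C$. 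The domination principle then gives $\varphi_j\geq u_j-\gamma^{-1}\ln c_1$, hence a uniform lower bound on $\varphi_j$ valid in both cases. Your $\lambda\downarrow 0$ route could also be made to work, but only after incorporating this auxiliary-equation trick to get the $L^\infty$-bound when $\omega$ is merely big.
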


Here $c(\omega,f)$ denotes the normalizing constant from Definition \ref{def:normalizing}. It is possible to study H\"older regularity of $\f$ following  \cite{DDGKPZ}, \cite{LPT20}, \cite{KN19}, but we leave it for a future project. 



\begin{proof}
{\it We first prove (1).}
	For simplicity we  assume $\lambda=1$. 
		By \cite[Theorem 0.1]{N16}, for each $\varepsilon>0$
		there exists $\varphi_{\varepsilon} \in \PSH(X,\omega+\varepsilon \omega_X) \cap L^{\infty}(X)$ 
		such that
	\[
	(\omega+\varepsilon \omega_X +dd^c \varphi_{\varepsilon})^n = e^{\varphi_{\varepsilon}} \mu. 
	\]
	The bounded subsolution $u$ obtained in Lemma \ref{lem: subsol big} satisfies 
	\[
	(\omega+\varepsilon \omega_X +dd^c u)^n \geq (\omega +dd^c u)^n
	\geq e^{u  +\ln c} \mu,
	\]
	since $u \leq 0$.
	The domination principle now yields $u+ \ln c \leq \varphi_{\varepsilon}$, for all $\varepsilon>0$.
	
	It follows again from 
	 the domination principle  that $\varphi_{\varepsilon}$ decreases to some function $\varphi \in \PSH(X,\omega)$
	 which is bounded below by the previous estimate, and bounded above by $\f_1$, hence
	 it is uniformly bounded.
	 
	 The uniqueness is a simple consequence of Corollary \ref{cor:unique2}.


\medskip

%
%
%
	

\noindent{\it We now prove (2).}
	It follows from \cite[Theorem 0.1]{KN15,KN19} that
	there exists $c_j>0$ and $\f_j \in \PSH(X,\omega) \cap L^{\infty}(X)$ such that $\sup_X \varphi_j=-1$ and
	\[
	(\omega_j +dd^c \varphi_j)^n = c_j \mu, 
	\]
	where $\omega_j:=\omega+ 2^{-j} \omega_X$.  
	By Lemma \ref{lem: subsol big} we have a lower bound $c_j \geq a$, while the domination principle ensures that $j \mapsto c_j $ is decreasing,
so $c_j \rightarrow c(\omega,f) \in [a,c_1]$.
	
	Fix $\gamma>0$ so small that $g_j:=e^{-\gamma \varphi_j} f$ is uniformly bounded in $L^q(X)$ for some $q>1$. This is possible thanks to the Skoda integrability theorem \cite{Sko72} and H\"older inequality. 
	By the previous step the solutions $u_j\in \PSH(X,\omega)$ to 
	\[
	(\omega+dd^c u_j)^n = e^{\gamma u_j} g_j dV_X
	\]
are uniformly bounded. Since $u_j$ satisfies 
	\[
	(\omega+ 2^{-j} \omega_X +dd^c u_j)^n \geq e^{\gamma (u_j- \gamma^{-1}\ln c_1)} c_1g_jdV_X,
	\]
	while $\varphi_j$ satisfies 
	\[
	(\omega+ 2^{-j} \omega_X +dd^c \varphi_j)^n \leq  e^{\gamma \varphi_j} c_1g_jdV_X,
	\]
	the domination principle ensures that $\varphi_j \geq u_j - \gamma^{-1} \ln c_1 \geq -C$. 
	
		\medskip
	
Extracting a subsequence we can assume that $\f_j$ converges in $L^1$ and a.e. to $\f\in \PSH(X,\omega) \cap L^{\infty}(X)$. 
	Set
$$
\Phi_j^+:=\left( \sup_{\ell \geq j} \f_{\ell} \right)^*
\; \; \text{ and } \; \; 
\Phi_j^-:=P_{\omega}\left( \inf_{\ell \geq j} \f_{\ell} \right).
$$
The function $\Phi_j^+$ (resp. $\Phi_j^-$) is $\omega_j$-psh (resp. $\omega$-psh) and uniformly bounded on $X$. 
 It follows from Lemma \ref{lem:envmin} that
 $$
(\omega_j+dd^c \Phi_j^+)^n \geq c_j^- fdV_X 
\; \; \text{ with }  \; \;
c_j^-:=\inf_{ \ell \geq j} c_{\ell}.
 $$
 Since $\Phi_j^+$ decreases to $\f$, while $c_j^-$ increases to $c$, it follows from the continuity
 of the complex Monge-Amp\`ere along monotonic sequences (see \cite{BT76,BT82}) that
 $$
(\omega+ dd^c \f)^n \geq c f dV_X
\; \; \text { on } \; \;
X.
 $$
Writing $(\omega_j +dd^c \f_j)^n = e^{\f_j-\f_j} c_j fdV_X$, it follows  from Lemma \ref{lem:envmin} that
 $$
(\omega+ dd^c \Phi_j^-)^n \leq e^{\Phi_j^--\inf_{\ell \geq j} \f_{\ell}} c_j^+ f dV_X
\; \; \text{ with }  \; \;
c_j^+ := \sup_{\ell \geq j} c_{\ell}.
 $$
The functions $\Phi_j^-$ increase to some $\varphi' \in \PSH(X,\omega)\cap L^{\infty}(X)$ 
which satisfies 
$$
(\omega+dd^c \varphi')^n \leq e^{\f'-\f}cfdV_X \; \text{on}\; X.
$$
The domination principle then yields $\f' \geq \f$ but we also have by construction that $\f'\leq \f$, hence $\f=\f'$ is a solution. 
\end{proof}

\subsection{Uniqueness of solutions}

 We now study the dependence of the solution with respect to the density, dealing as well with the case
when the  RHS  depends on the solution in a convex way.

   \begin{theorem}  \label{thm: stab sem pos}
  Assume $\omega$ is  either big
   or such that $v_-(\omega)>0$.
  Fix $f_1,f_2 \in L^p(dV_X)$ with $p>1$ and 
  $A^{-1} \leq \left(\int_X f_i^{\frac{1}{n}} dV_X\right)^n \leq \left(\int_X f_i^p dV_X \right)^{\frac{1}{p}}\leq A$,
  for some  constant $A>1$. 
  Assume  $\f_1,\f_2 \in \PSH(X,\omega) \cap L^{\infty}(X)$ satisfy
      $$
      (\omega+dd^c \f_i)^n=e^{\lambda \f_i}f_idV_X.
      $$ 
  
  \begin{enumerate}
  \item    If $\lambda >0$, then    
 $
||\f_1-\f_2||_{\infty} \leq  T  ||f_1-f_2||^{\frac{1}{n}},
 $
 where $T$  is a   constant which depends on $n,p$ and upper bounds for  $A$, $\lambda^{-1}, \lambda$. 
 
 \smallskip
 
 \item If $\lambda=0$ and $f_1 \geq c_0>0$, then the same estimate holds with $T$ depending on $n,p$ and upper bounds for  $A, c_0^{-1}$.
  \end{enumerate}
  \end{theorem}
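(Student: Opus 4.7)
The proof adapts the stability scheme of Theorem~\ref{thm:stabilityhermitian} (built on \cite[Theorem~2.4]{GL21a}) with adjustments for the self-dependent right-hand side. First I establish the uniform bound $\|\varphi_i\|_\infty \leq C_0$ on both solutions, with $C_0$ depending only on the listed parameters. In case~(1) this is Theorem~\ref{thm: bounded sol semipositive}(1) directly. In case~(2) the lower bound $f_1 \geq c_0$ combined with Lemma~\ref{lem:AGM} (applied to the solutions themselves, giving $\omega_{\varphi_i}\wedge\omega_X^{n-1} \geq c_i^{1/n}(\text{const})dV_X$) and the subsolution of Lemma~\ref{lem: subsol big} produces a uniform two-sided control on the normalizing masses $c_i = \int_X \omega_{\varphi_i}^n$, so that Theorem~\ref{thm:uniformhermitian} applies and delivers the $L^\infty$-bound. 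By the symmetry $(f_1,\varphi_1) \leftrightarrow (f_2,\varphi_2)$ in the statement, it then suffices to bound $M := \sup_X(\varphi_1-\varphi_2)$ in terms of $\delta := \|f_1-f_2\|_{L^1(dV_X)}$.

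Normalizing $\tilde\varphi := \varphi_1 - \sup_X\varphi_1$ so that $\sup_X\tilde\varphi=0$, and setting $\phi := \varphi_2 - \sup_X\varphi_1 \in \PSH(X,\omega)\cap L^\infty(X)$, I apply Theorem~\ref{thm:stabilityhermitian} with $\mu := c_1^{-1}\omega_{\varphi_1}^n$. Since $d\mu = c_1^{-1}e^{\lambda\varphi_1}f_1\,dV_X$ has $L^p$-density thanks to Step~1 and $f_1\in L^p$, the quantity $A_m(\mu)$ is under control for some $m > n$. The theorem gives
$$
M \leq T\,\Bigl(\int_X(\varphi_2-\varphi_1)_+\,d\mu\Bigr)^{\gamma}
$$
for an explicit $\gamma = \gamma(n,m)>0$, and it remains to dominate $\int_X(\varphi_2-\varphi_1)_+\,d\mu$ by a constant multiple of $\delta$. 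For this step I use Lemma~\ref{lem:GLkey} applied to the ordered pair $(\varphi_1-M,\,\varphi_2)$ (valid since $\varphi_1-M\leq\varphi_2$ on $X$) with a concave weight $\chi:\R^-\to\R^-$, yielding for $u := P_\omega\bigl(\varphi_2 + \chi\circ(\varphi_1-M-\varphi_2)\bigr)$ the bound
$$
\omega_u^n \leq \mathbf{1}_{\mathcal C}(\chi'\circ(\varphi_1-M-\varphi_2))^n e^{\lambda\varphi_1}f_1\,dV_X.
$$
Combining this with the splitting $f_1 = f_2 + (f_1-f_2)$, the monotonicity of $e^{\lambda \varphi}$, and the two-sided bounds on $e^{\lambda\varphi_i}$ from Step~1, one transfers the mass from $\omega_{\varphi_1}^n$ to $\omega_{\varphi_2}^n$ at the cost of an additive $e^{\lambda C_0}\delta$-error, and a Chebyshev estimate against the sublevel sets $\{\varphi_1-\varphi_2 < M-t\}$ (together with the iterative choice of $\chi$ from the proof of Theorem~\ref{thm:uniformhermitian}) closes the loop with the sharp exponent $1/n$.

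The main obstacle is this last reduction from a volume-type distance to the $L^1$-norm of $f_1-f_2$. In the K\"ahler setting a quick integration-by-parts estimate of the form $\int(\varphi_1-\varphi_2)(d\mu_1-d\mu_2)\geq 0$ yields the bound almost for free and produces a quadratic energy inequality $\|\varphi_1-\varphi_2\|_{L^2(f_2 dV_X)}^2 \lesssim \delta\|\varphi_1-\varphi_2\|_\infty$; but the non-closedness of $\omega$ in the hermitian setting forbids such manipulations. One must instead rely entirely on the envelope and contact-set formalism, using the orthogonality from Theorem~\ref{thm:orthog} and the uniform $L^\infty$-bound of Step~1 to absorb all torsion-type error terms. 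Case~(2) ($\lambda=0$) is handled by the identical scheme, with the self-regularizing factor $e^{\lambda\varphi}$ replaced by the lower bound $f_1\geq c_0$ which, via Lemma~\ref{lem:AGM}, keeps the normalizing constant bounded away from zero and so makes Theorem~\ref{thm:uniformhermitian} available throughout.
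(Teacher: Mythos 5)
Your Step~1 (uniform $L^\infty$ bounds on $\varphi_1,\varphi_2$ via Theorem~\ref{thm: bounded sol semipositive}, Lemma~\ref{lem: subsol big} and Lemma~\ref{lem:AGM}) is correct and matches the paper. The rest of the argument has a genuine gap: the entire difficulty of the theorem is concentrated in the step you yourself flag as ``the main obstacle,'' namely converting smallness of $\|f_1-f_2\|_p$ into smallness of $\int_X(\varphi_2-\varphi_1)_+\,d\mu$ (or directly of $\sup_X(\varphi_1-\varphi_2)$), and your proposal does not actually carry it out. Lemma~\ref{lem:GLkey}, the contact-set formalism and a Chebyshev estimate only ever produce information about \emph{one} Monge--Amp\`ere measure at a time; they give no mechanism for comparing $\omega_{\varphi_1}^n$ with $\omega_{\varphi_2}^n$, which is exactly what the forbidden integration by parts accomplishes in the K\"ahler case. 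There is also an exponent mismatch: routing the estimate through Theorem~\ref{thm:stabilityhermitian} yields a power $\gamma=\gamma(n,m)$ of an $L^1$ quantity, and even granting $\int(\varphi_2-\varphi_1)_+\,d\mu\lesssim\delta$ this gives $\delta^{\gamma}$, not the claimed sharp $\delta^{1/n}$.

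The paper's proof is structured quite differently and you should compare. For $\lambda>0$ it does not use Theorem~\ref{thm:stabilityhermitian} at all: one solves an auxiliary equation $(\omega+dd^c u)^n=c\bigl(|f_1-f_2|/\|f_1-f_2\|_p^{1/n}+1\bigr)dV_X$, forms the barrier $\phi=(1-\varepsilon)\varphi_1+\varepsilon u-C\varepsilon$ with $\varepsilon\sim\|f_1-f_2\|_p^{1/n}$, verifies $(\omega+dd^c\phi)^n\geq e^{\phi}f_2\,dV_X$ as in \cite[Theorem 1.1]{LPT20}, and concludes by the domination principle; the exponent $1/n$ comes from this choice of $\varepsilon$, not from a $\gamma$-power. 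For $\lambda=0$ the argument is the hardest part of the proof: one first reduces to $2^{-\varepsilon}f_1\leq f_2\leq 2^{\varepsilon}f_1$, then localizes all integrations by parts to thin collars $E_{a,b}=\{\varphi_2+a<\varphi_1<\varphi_2+b\}$ where the torsion errors are of order $(b-a)^2$ and hence harmless, bounds the gradient energy on each collar from above (estimate \eqref{eq: stab 1}) and from below via Popovici's pointwise inequality (estimate \eqref{eq: stab 2}) and the Sobolev-type Lemma~\ref{lem: KN19 Sobolev}, and finally counts dyadic collars. In particular the hypothesis $f_1\geq c_0$ is used precisely in Popovici's inequality to bound $\omega_{\varphi_1}^n/\omega_X^n$ from below pointwise --- not, as you suggest, merely to control the normalizing constant --- so your claim that case~(2) follows by ``the identical scheme'' is not tenable.
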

  
      In particular, there is at most one bounded $\omega$-psh solution $\varphi$ to the equation
      $(\omega+dd^c \f)^n=e^{\lambda \f}f \, dV_X$ in case
      $\lambda>0$, or when $\lambda=0$ and $f\geq c_0>0$.
      
      \smallskip
 
 When $\omega$ is hermitian such a stability estimate has been provided by Ko{\l}odziej-Nguyen 
 for $\lambda=0$ 
 \cite{KN19}.
 We   adapt some arguments of
 Lu-Phung-T\^o  \cite{LPT20}, \cite{GLZ18}, who treated the case $\lambda>0$,
 and obtained refined estimates in the case $\lambda=0$.

 \begin{proof}
(1)  We first consider the simpler case when $\lambda>0$.
 	Rescaling we can assume without loss of generality that $\lambda=1$.  
 	We first observe that by 
 	Theorem \ref{thm: bounded sol semipositive}
 	 $\f_1,\f_2$ are uniformly bounded. Since $\f_1, \f_2$ play the same role, it suffices to show that $\f_1 - \f_2\leq T \|f_1-f_2\|_p^{1/n}$. Using Theorem \ref{thm: bounded sol semipositive} we solve 
 	\[
 	(\omega+dd^c u)^n = c \left ( \frac{|f_1-f_2|}{\|f_1-f_2\|_p^{1/n}} +1\right ) dV_X,
 	\]
 	with $\sup_X u =0$ and
 	$|u|, c, c^{-1}$ uniformly bounded. We can assume that $$\e := e^{(\sup_X u -\log c)/n}\|f_1-f_2\|_p^{1/n}$$ is small enough. We next consider 
 	\[
 	\phi := (1-\e) \f_1 +\e u -C\varepsilon,
 	\]
 	and proceed exactly the same as in \cite[Theorem 1.1]{LPT20} to prove that 
 	$$(\omega+dd^c \phi)^n \geq e^{\phi} f_2 dV_X.
 	$$ 
 	The domination principle thus gives $\phi \leq \f_2$, yielding the desired estimate.

 \bigskip
 
 (2) We now take care of the case $\lambda=0$.
 As usual we use $C$ to denote various uniform constants. 
 	 By Theorem \ref{thm: bounded sol semipositive} we have that $\sup_X (|\f_1|+|\f_2|) \leq C$. 
 	 
 	 \smallskip
 	 
 	 \noindent
 {\it Step 1.} We first assume that  $0<f_1,f_2$ are smooth,  and 
 	\begin{equation}
 		\label{eq: stab assump}
 			2^{-\varepsilon} f_1 \leq f_2 \leq 2^{\varepsilon} f_1,
 	\end{equation}
 	for some small constant $\varepsilon >0$. Our goal is to show that $|\f_1-\f_2|\leq C\e$.

 Let $m=\inf_X (\f_1-\f_2)$ and $M=\sup_X (\f_1-\f_2)$. 
 To simplify the notation we set $\mu= f_1^pdV_X$.  Recall that by Lemma \ref{lem: subsol big} there exists a constant $c=c(\omega,p)>0$ such that if $0\leq f \in L^p(X,dV_X)$ with $\|f\|_p\leq 1$ then we can find $u\in \PSH(X,\omega)$ such that $(\omega+dd^c u)^n \geq 4c fdV_X$ with $-1\leq u \leq 0$. 
   We define 
 	\[
 	t_0:= \sup \{t \in  [m,M] \; : \; \mu(\f_1 < \f_2+ t) \leq c^p\},
 	\]
 	\[
 	T_0:= \inf \{t \in [m,M] \; : \; \mu(\f_1 > \f_2 + t) \leq c^p\}. 
 	\] 

\medskip 

\noindent {\it Step 1.1.} We claim that $t_0\leq m+C\e$ and $T_0 \geq M-C\e$. 

 	Assume $t\leq t_0$ and $\mu(u<v+t) \leq c^p$. Set	
 	\[
	\hat{f} := \frac{1}{2c} {\bf 1}_{\{\f_1 < \f_2+ t\}}f_1 +\frac{1}{2\|f_1\|_p}{\bf 1}_{\{\f_1\geq \f_2+t\}}f_1.
	\]
	It follows from Lemma \ref{lem: subsol big} that $c(\omega,\hat{f}) \geq 4c$, since
	\begin{flalign*}
		\int_X \hat{f}^p dV_X &\leq  \frac{1}{(2c)^p}\int_{\{\f_1<\f_2+t\}} f^p_1 dV + 2^{-p} \leq 1.
	\end{flalign*}
	 Using Theorem \ref{thm: bounded sol semipositive} we  find $\phi \in \PSH(X,\omega) \cap L^{\infty}(X)$ such that
	 $\sup_X (\phi-\f_2)=0$ and
 	\[
 	(\omega+dd^c \phi)^n = c(\omega,\hat{f}) \hat{f}dV_X.
 	\]
 	Since $\hat{f} \geq C^{-1} f_1$, it follows that $4c\leq c(\omega,\hat{f})\leq C$, hence 
 	 $\phi$ is uniformly bounded. 
 	
 	We next consider $\psi:= (1-\varepsilon)\f_2 + \varepsilon \phi +t$. Then  $\{\f_1<\psi\}\subset \{\f_1<\f_2+t\}$ and on the latter set we have 
 	\[
 	\omega_{\psi}^n \geq ((1-\varepsilon)2^{-\varepsilon/n} + 2^{1/n}\varepsilon )^n f_1dV \geq (1+\gamma) f_1dV = (1+\gamma) \omega_{\f_1}^n,  
 	\]
 	for some $\gamma>0$. The domination principle ensures that $\f_1\geq \psi \geq \f_2 +t -C\varepsilon$, 
 	hence $t\leq \f_1-\f_2+ C\varepsilon$. Since this inequality is true for all $x\in X$, we can take $x$ such that $\f_1(x)-\f_2(x) =\inf_X (\f_1-\f_2)$ and obtain   
 	\begin{equation}
 		\label{eq: stab t0}
 		t_0\leq m+ C\varepsilon.
 	\end{equation} 
 	Using a similar argument we  obtain
 	\begin{equation}
 		\label{eq: stab T0}
 		T_0\geq M- C\varepsilon.
 	\end{equation}

\smallskip

\noindent {\it Step 1.2.} 
 	Fixing $t_0<a <b<T_0$, we claim that
 	\begin{equation}
 		\label{eq: stab 1}
 		I_{a,b}:=\int_{E_{a,b}} d(\f_1-\f_2) \wedge d^c (\f_1-\f_2) \wedge \omega_{\f_1}^{n-1} \leq C(b-a)(\varepsilon +b-t_0),
 	\end{equation}
 	and 
 	\begin{equation}
 		\label{eq: stab 2}
 		I_{a,b}\int_{E_{a,b}} \omega_{\f_1} \wedge \omega_X^{n-1} \geq C^{-1}  \left (\int_{E_{a,b}} |\partial (\f_1-\f_2)| dV_X\right )^2,
 	\end{equation}
 	where $E_{a,b}:= \{\f_2+a < \f_1 < \f_2+b\}$.

 	To prove \eqref{eq: stab 1} we set $\varphi:= \max(\f_1,\f_2+a)$, $\psi:= \max(\f_1,\f_2+b)$, and consider
 	\[
 	J:= \int_X (\varphi-\psi)(\omega_{\psi}^n - \omega_{\varphi}^n)= \int_X (\varphi-\psi) dd^c (\psi-\varphi) \wedge T,
 	\]
 	where $T= \sum_{j=0}^{n-1} \omega_{\varphi}^j \wedge \omega_{\psi}^{n-1-j} \geq \omega_{\varphi}^{n-1}$.
 	Integrating by parts we obtain 
 	\begin{flalign*}
 		J & = \int_X d(\varphi-\psi) \wedge d^c (\varphi-\psi) \wedge T - \frac{1}{2}\int_X (\varphi-\psi)^2 dd^c T\\
 		& \geq \int_{E_{a,b}} d(\f_1-\f_2) \wedge d^c (\f_1-\f_2) \wedge \omega_{\f_1}^{n-1} - C (b-a)^2. 
 	\end{flalign*}
 	 We have used here $0\leq \psi-\varphi \leq b-a$, $\omega_{\f_1}^{n-1}= \omega_{\varphi}^{n-1}$ on $\{\f_1>\f_2+a\}$, and 
 	 \[
 	 dd^c T \geq -C (\omega^2\wedge (\omega_{\f_1}+\omega_{\f_2})^{n-2}+\omega^3\wedge (\omega_{\f_1}+\omega_{\f_2})^{n-3}),
 	 \]  
 	 the integral over $X$ of the latter form being uniformly bounded. 
 	 We have also used that $\partial \psi = 0$ a.e. on $\{\f_1\geq \f_2+b\} \cup \{\f_1\leq \f_2+a\}$.

 	 On the other hand, $(\varphi-\psi)(\omega_{\varphi}^n-\omega_{\psi}^n) =0$ on $\{\f_1<\f_2+a\} \cup \{\f_1\geq \f_2+b\}$, and 
 	 \[
 	 \left | \int_{E_{a,b}}(\varphi-\psi)(\omega_{\varphi}^n-\omega_{\psi}^n)   \right |  \leq  (b-a) \int_X |f-g|dV  \leq C(b-a)\varepsilon. 
 	 \] 
 	 Now
 	 \begin{flalign*}
 	 	 \left | \int_{\{\f_1=\f_2+a\}} (\varphi-\psi)(\omega_{\psi}^n - \omega_{\varphi}^n) \right | &= (b-a) \left | \int_{\{\f_1=\f_2+a\}} (\omega_{\varphi}^n - \omega_{\f_2}^n) \right | \\
 	 	& = (b-a) \left |\int_{\{\f_1\leq \f_2+a\}} (\omega_{\varphi}^n- \omega_{\f_2}^n)\right | \\
 	 	& \leq   C(b-a) \varepsilon  + (b-a) \left |\int_{\{\f_1\leq \f_2+a\}} (\omega_{\varphi}^n- \omega_{\f_1}^n)\right | \\
 	 	& = C(b-a) \varepsilon  +  (b-a)\left |\int_X (\omega_{\varphi}^n- \omega_{\f_1}^n)\right | \\
 	 	& \leq C(b-a) \varepsilon + (b-a)  C\sup_X (\varphi-\f_1)\\
 	 	&\leq  C(b-a) (\varepsilon + a-t_0).
 	 \end{flalign*}
 	 These inequalities are justified as follows:
 	 \begin{itemize}
 	 \item  in the second line we used the identity $\omega_{\varphi}^n = \omega_{\f_2}^n$ on $\{\f_1<\f_2+a\}$. 
 	 \item in the third line we used 
 	 $
 	\left |  \int_{\{\f_1\leq \f_2+a\}} (\omega_{\f_1}^n - \omega_{\f_2}^n)  \right | \leq \int_X |f-g| dV_X \leq C\varepsilon.  
 	 $
 	 \item in the fourth line we used the identity $\omega_{\varphi}^n= \omega_{\f_1}^n$ on $\{\f_1>\f_2+a\}$.
 	 \item the fifth line consists in the following  integration by part,
 	 \[
 	 \left | \int_X (\omega_{\varphi}^n - \omega_{\f_1}^n) \right | = \left | \int_X (\varphi-\f_1) dd^c  \sum_{j=0}^{n-1} \omega_{\varphi}^j \wedge \omega_{\f_1}^{n-1-j} \right | \leq C \sup_X (\varphi-\f_1). 
 	 \]	
 	 \end{itemize}
 Altogether this yields \eqref{eq: stab 1}, as
 	 \[
 	 I_{a,b} \leq C(b-a) (\varepsilon +  a -t_0+ b-a) \leq C_1(b-a)(b-t_0+\varepsilon).
 	 \]

 	 We next prove \eqref{eq: stab 2}. 
 	 The following pointwise   inequality is a reformulation of  \cite[Lemma 3.1]{Pop16}:
 	  if $\alpha\geq 0$ is a $(1,1)$-form and $\omega_1,\omega_2$ are  Hermitian forms then 
 	 \[
 	 \frac{\alpha \wedge \omega_1^{n-1}}{\omega_2^n} \frac{\omega_1 \wedge \omega_2^{n-1}}{\omega_2^n} \geq \frac{1}{n} \frac{\omega_1^n}{\omega_2^n}  \frac{\alpha \wedge \omega_2^{n-1}}{\omega_2^n}. 
 	 \] 
 	 Applying this  for $\alpha = d(\f_1-\f_2) \wedge d^c (\f_1-\f_2)$, $\omega_1= \omega_{\f_1}$, $\omega_2=\omega_X$ we obtain
 	 \begin{flalign*}
 	 	\frac{d(\f_1-\f_2) \wedge d^c (\f_1-\f_2) \wedge \omega_{\f_1}^{n-1}}{\omega_X^n} \frac{\omega_{\f_1} \wedge \omega_X^{n-1}}{\omega_X^n} & \geq  \frac{1}{n} \frac{\omega_{\f_1}^n}{\omega_X^n} |\partial (\f_1-\f_2)|^2\\
 	 	& \geq \frac{c_0}{nC} |\partial(\f_1-\f_2)|^2.
 	 \end{flalign*}
 	 Applying  Cauchy-Schwarz inequality we obtain \eqref{eq: stab 2}.
 	  Let us stress that this is the only place where the extra assumption $f_1\geq c_0$ is used.  

\smallskip

 \noindent {\it Step 1.3.} 
 We finally set $t_1=t_0+\varepsilon$, $t_k:= t_0+ 2^{k-1}(t_1-t_0)$, for $k\geq 2$, 
 so  that $t_{k+1}-t_k=2^{k-1}\varepsilon$. Fix $k$ such that $t_0 \leq t_k <t_{k+1} \leq T_0$.  Then 
 	$$
 	\mu(\f_1<\f_2+t_k) \geq c^p \; \text{and}\;  \mu(\f_1>\f_2+t_{k+1}) \geq c^p,
 	$$ 
  hence  
  Lemma \ref{lem: KN19 Sobolev}  below (with $h= \f_1-\f_2-t_k$ and $\delta=t_{k+1}-t_k$) yields
 	\begin{equation*}
 			\int_{\{\f_2+t_{k}<\f_1\leq \f_2+t_{k+1}\}} |\partial (\f_1-\f_2)| dV_X \geq C^{-1} (t_{k+1}-t_k)  =  C^{-1} 2^{k-1}\varepsilon.
 	\end{equation*} 
 	 Combining this with \eqref{eq: stab 1} and \eqref{eq: stab 2} for $a=t_k$ and $b=t_{k+1}$ we obtain 
 	 \begin{flalign} 
 	 		 & C2^{2k} \varepsilon^2\int_{E_{a,b}} \omega_{\f_1} \wedge \omega_X^{n-1} \nonumber  \geq  I_{a,b}\int_{E_{a,b}} \omega_{\f_1} \wedge \omega_X^{n-1} \geq  C^{-1}2^{2k} \varepsilon^2,\label{eq: grad on small collars}
 	 \end{flalign}
hence
 	 \begin{flalign*}
 	 	 \int_{\{\f_2+t_k <\f_1\leq \f_2+t_{k+1}\}} \omega_{\f_1} \wedge \omega_X^{n-1}  \geq C^{-1}. 
 	 \end{flalign*}
 	We take $N$ such that $t_N<T_0\leq t_{N+1}$.  Summing up for $k$ from $1$ to $N$ we obtain 
 	$$
 	\int_{\{\f_2+t_1 <\f_1\leq \f_2+t_{N}\}} \omega_{\f_1} \wedge \omega_X^{n-1} \geq NC^{-1}.
 	$$ 
 	Since the integral $\int_X \omega_{\f_1} \wedge \omega_X^{n-1}$ is uniformly bounded, 
 	this yields a uniform upper bound for $N$. Using \eqref{eq: stab t0} and \eqref{eq: stab T0} we thus obtain
 	the desired bound
 	 \[
 	M-m =  M-T_0 + T_0-t_{N+1} + \sum_{k=1}^{N+1} (t_{k}-t_{k-1}) +t_0-m  \leq C\varepsilon. 
 	 \]

\medskip

\noindent {\it Step 2.} 
To remove the assumption \eqref{eq: stab assump} on $f_1,f_2$ from Step 1 one can proceed as in \cite{LPT20} 
to which we refer the reader (this requires the case $\lambda=1$ which has been treated independently above). 

We finally remove the smoothness assumption. Let $f_{1,j}, f_{2,j}$ be sequences of smooth positive densities converging to $f_1,f_2$ respectively in $L^p(X,dV_X)$. Let also $\f_{1,j}$, $\f_{2,j}$ be smooth functions decreasing to $\f_1,\f_2$ respectively. 
Using \cite{Cher87} we solve, for $i=1,2$,
\[
(\omega_j+dd^c u_{i,j})^n = e^{u_{i,j}-\f_{i,j}} f_{i,j} dV_X, 
\]
where $\omega_j=\omega+ 2^{-j} \omega_X$. 
Theorem \ref{thm: bounded sol semipositive} ensures that the functions  $u_{i,j}$ are uniformly bounded. Note that the constants $c(p, \omega_j)$ from Lemma \ref{lem: subsol big} satisfy  $c(p,\omega_j) \geq c(p, \omega)>0$. By Step 1 we have 
\[
|u_{1,j}-u_{2,j}| \leq T \|g_{1,j}-g_{2,j}\|_p^{1/n},
\]
where $g_{i,j}= e^{u_{i,j}-\f_{i,j}} f_{i,j}$. As $j \to +\infty$, arguing as in the proof of Theorem \ref{thm: bounded sol semipositive} we can show that $u_{i,j}$ converge to $u_i$ solving 
\[
(\omega+dd^c u_i)^n = e^{u_i-\f_i} f_i dV_X.
\]
The domination principle ensures that $u_i=\f_i$, hence $\|g_{1,j}-g_{2,j}\|_p \to \|f_1-f_2\|_p$ 
finishing the proof. 
\end{proof}

 We have used the following elementary result (see \cite[Lemma 2.6]{KN19}):

\begin{lemma}\label{lem: KN19 Sobolev}
	Let $h$ be a real-valued function in $W^{1,1}(X)$ such that 
	\[
	{\rm Vol}(h \leq 0) \geq \gamma \; \; \text{and}\; \;  {\rm Vol}(h \geq \delta ) \geq \gamma
	\] 
	where $\delta>0$ and $\gamma>0$ are constants. Then 
	\[
	\int_{\{0 < h <  \delta \}} |\partial h| dV_X \geq C^{-1}  \gamma^{\frac{2n-1}{2n}} \delta. 
	\]
\end{lemma}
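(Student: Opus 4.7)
The plan is to apply the Sobolev--Poincar\'e inequality on the compact Riemannian manifold $(X,\omega_X)$ of real dimension $2n$ to a suitable truncation of $h$, together with a Chebyshev-type lower bound on the $L^q$-oscillation of that truncation.

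First, I would truncate $h$ by setting $\tilde h := \max(0,\min(h,\delta))$. Then $\tilde h \equiv 0$ on $\{h\leq 0\}$, $\tilde h \equiv \delta$ on $\{h\geq \delta\}$, and $\tilde h \in W^{1,1}(X)$ with $|\nabla \tilde h| = |\nabla h| \mathbf{1}_{\{0<h<\delta\}}$ a.e. Since $h$ is real-valued, $|dh|^2 = 2|\partial h|^2$, so it suffices to bound $\int_X |\nabla \tilde h|\, dV_X$ from below by $C^{-1}\gamma^{(2n-1)/(2n)}\delta$.

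Second, I would invoke the Sobolev--Poincar\'e inequality on the closed $2n$-dimensional Riemannian manifold $(X,\omega_X)$: with $q=\frac{2n}{2n-1}$ and $\bar u := \frac{1}{\mrm{Vol}(X)}\int_X u\, dV_X$, there is a constant $C_0=C_0(X,\omega_X)$ such that
\[
\left(\int_X |u-\bar u|^q\, dV_X\right)^{1/q} \leq C_0 \int_X |\nabla u|\, dV_X
\]
for every $u\in W^{1,1}(X)$. Apply this to $u=\tilde h$, and write $a:=\bar{\tilde h}\in[0,\delta]$.

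Third, I would produce a lower bound on the left-hand side by cases. If $a\leq \delta/2$, then $|\tilde h-a|\geq \delta/2$ on $\{h\geq \delta\}$, a set of volume at least $\gamma$, so $\|\tilde h-a\|_{L^q}\geq (\delta/2)\gamma^{1/q}$; if $a\geq \delta/2$, the same estimate follows using $\{h\leq 0\}$. Either way,
\[
\|\tilde h-a\|_{L^q(X)} \geq \tfrac{\delta}{2}\,\gamma^{(2n-1)/(2n)}.
\]
Combining with the Sobolev--Poincar\'e inequality and the pointwise equivalence $|\partial h| \asymp |\nabla h|$ gives
\[
\int_{\{0<h<\delta\}} |\partial h|\, dV_X \geq C^{-1}\gamma^{(2n-1)/(2n)}\delta
\]
for a uniform constant $C$ depending only on $(X,\omega_X)$ and $n$.

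The only nontrivial ingredient is the $W^{1,1}$ Sobolev--Poincar\'e inequality on a compact Riemannian manifold, which is classical; everything else is an elementary truncation and averaging argument, so I do not anticipate a real obstacle beyond citing this inequality carefully.
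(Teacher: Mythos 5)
Your proof is correct, and it follows the standard route: truncate $h$ to $\tilde h=\max(0,\min(h,\delta))$, apply the $W^{1,1}$ Sobolev--Poincar\'e inequality with critical exponent $q=\frac{2n}{2n-1}$ on the $2n$-real-dimensional manifold $(X,\omega_X)$, and bound the $L^q$-oscillation from below by a case distinction on the mean. The paper gives no proof of its own (it cites \cite[Lemma 2.6]{KN19}), and your argument is essentially the one found there, so there is nothing to add.
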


Here $W^{1,1}(X)$ denotes the set of $L^1$  functions whose gradient
belongs to $L^1$.



\subsection{Locally bounded solutions} \label{sec:bdd}

We assume in this section that $v_-(\omega)>0$ {\it and }$\omega$ is big.
We fix $\rho$ an $\omega$-psh function   such that
\begin{itemize}
\item $\rho$ has analytic singularities and $\sup_X \rho=0$;
\item $\omega+dd^c \rho \geq \delta \omega_X$ is a hermitian current;
\end{itemize}

Given $\p$ a quasi-plurisubharmonic function on $X$ and $c>0$, we set
$$
E_c(\p):=\{ x \in X  \; : \;  \nu(\p,x) \geq c \},
$$
where $\nu(\p,x)$ denotes the  Lelong number of $\p$ at $x$. 
A celebrated theorem of Siu ensures that for any $c>0$, the
set $E_c(\p)$ is a closed analytic subset of $X$.

\begin{thm}   \label{thm:bdd}
Assume $\omega$ is a big form and $v_-(\omega)>0$.
Fix $p>1$ and $q=\frac{p}{p-1}$.
Let $\mu=fdV_X$ be a probability measure, where
   $f=g e^{-\p}$ with $0 \leq g \in L^p(dV_X)$,  and  $\p$ is 
quasi-plurisubharmonic. Assume $f$ belongs to a fixed Orlicz class $L^w$. 
Then  there exist a unique constant $c>0$ and a function $\f\in \PSH(X,\omega)$ 
such that
\begin{itemize}
\item $\f$ is locally bounded and continuous in 
$\Omega:= X \setminus \{\rho=-\infty\} \cup E_{q^{-1}}(\p)$;
\item $
  (\omega+dd^c \f)^n=c   fdV_X 
 \; \;  \text{ in } \; \;
\Omega;
$
\item for any $\alpha>0$, 
$
\alpha ( A^{-1}\p+\rho)-\beta \leq \f \leq 0;
$
\end{itemize}
where $A>0$ is a constant such that $dd^c \p \geq -A \omega_{\rho}$ and 
$\beta>0$ depends on  $p,w$,  upper bounds for $\alpha^{-1}$, $||g||_{L^p}, ||f||_{w}$, $v_-(\omega)^{-1}$.
\end{thm}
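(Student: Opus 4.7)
The plan is to approximate by truncating the singular weight $\psi$, apply the relative $L^\infty$-estimate of Theorem \ref{thm:c0relative} to obtain the pointwise lower bound, and then pass to the limit using Skoda integrability together with local pluripotential estimates off the analytic set $\{\rho=-\infty\}\cup E_{q^{-1}}(\psi)$. Set $\psi_j:=\max(\psi,-j)$, a bounded quasi-psh function still satisfying $dd^c\psi_j\geq -A\omega_\rho$, and $f_j:=ge^{-\psi_j}\in L^p(dV_X)$, so that $f_j\nearrow f$. Theorem \ref{thm: bounded sol semipositive}(2) provides a unique pair $(c_j,\varphi_j)$ with $\varphi_j\in\PSH(X,\omega)\cap L^\infty(X)$, $\sup_X\varphi_j=0$, and $(\omega+dd^c\varphi_j)^n=c_jf_jdV_X$.

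Introducing $\tilde\psi_j:=A^{-1}\psi_j+\rho$, one verifies $\tilde\psi_j\in\PSH(X,\omega)$ from the hypothesis on $A$; since $\rho\leq 0$,
\[
f_j=g\,e^{A\rho}e^{-A\tilde\psi_j}\leq g\,e^{-A\tilde\psi_j},
\]
with $\|f_j\|_w\leq\|f\|_w$ uniformly. The refined version of Theorem \ref{thm:c0relative} (for $0<\alpha\leq 1$) then yields $\alpha\tilde\psi_j-\beta\leq\varphi_j\leq 0$, and using $\psi_j\geq\psi$ this descends to the desired uniform bound $\alpha(A^{-1}\psi+\rho)-\beta\leq\varphi_j\leq 0$ on $X$. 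Positive bounds on the $c_j$ come from integrating the equation:
\[
v_-(\omega)\leq \int_X(\omega+dd^c\varphi_j)^n=c_j\int_Xf_jdV_X\leq v_+(\omega_X)
\]
(after rescaling $\omega_X$ so that $\omega\leq\omega_X$), combined with $\int_Xf_jdV_X\nearrow 1$. Extracting a subsequence, $c_j\to c>0$ and $\varphi_j\to\varphi$ in $L^1(X)$, with $\varphi\in\PSH(X,\omega)$, $\sup_X\varphi=0$, inheriting the pointwise lower bound.

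To verify the equation on $\Omega$, I would argue locally. For a fixed compact $K\subset\Omega$, upper semi-continuity of Lelong numbers and compactness of $K$ yield $\sup_{x\in K}\nu(\psi,x)<q^{-1}-\eta$ for some $\eta>0$; Skoda's integrability theorem combined with H\"older's inequality then produces a neighborhood $V\supset K$ and an exponent $p'>1$ such that $f\in L^{p'}(V)$, so the $f_j\leq f$ have uniformly bounded $L^{p'}(V)$-norm. Covering $K$ by small coordinate balls on which $\omega=dd^c\chi$ for a smooth potential $\chi$, the local $L^\infty$-estimate of Ko{\l}odziej bounds $\|\varphi_j\|_{L^\infty(K)}$ uniformly; a local $L^1$-$L^\infty$ stability estimate for bounded Monge-Amp\`ere solutions with $L^{p'}$ density then upgrades the global $L^1$-convergence into locally uniform convergence $\varphi_j\to\varphi$ on $K$. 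Bedford-Taylor continuity of the operator under such convergence yields $(\omega+dd^c\varphi)^n=cf\,dV_X$ on $\Omega$, and continuity of $\varphi$ on $\Omega$ follows from the same local theory. Uniqueness of $c$ can be extracted from a comparison argument in the spirit of Corollary \ref{cor:unique2}(1), applied to the truncated weights $\max(\psi,-t)$ and letting $t\to +\infty$ so as to reduce to the $\rho$-singularity in which Proposition \ref{prop: domination principle unbounded} applies.

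The principal obstacle lies in this last step: while the relative lower bound from Theorem \ref{thm:c0relative} cleanly controls the singular behavior of the $\varphi_j$'s on all of $X$, converting global $L^1$-convergence into locally uniform convergence on $\Omega$ requires both Skoda-based local $L^{p'}$-integrability of $f$ off $E_{q^{-1}}(\psi)$ and the delicate local pluripotential stability of bounded Monge-Amp\`ere solutions.
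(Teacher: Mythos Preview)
Your overall architecture---truncate $\psi$, apply Theorem \ref{thm:c0relative} for the relative bound, pass to the limit---matches the paper's, and the $L^\infty$-estimate step is fine. Two points, however, differ substantively.

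First, your upper bound $\int_X(\omega+dd^c\varphi_j)^n\leq v_+(\omega_X)$ is unjustified: $v_+(\omega_X)$ is not known to be finite in general. The paper instead bounds $c_j$ from above via the mixed inequality of Lemma \ref{lem:AGM}, obtaining $c_j^{1/n}\int_X f_j^{1/n}\,\omega_X^n\leq C\int_X(\omega+dd^c\varphi_j)\wedge\omega_X^{n-1}$, whose right side is controlled by an integration by parts against $dd^c\omega_X^{n-1}$ and the $L^1$-compactness of normalized $\omega$-psh functions.

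Second, and more seriously, your passage to the limit has a genuine gap. You propose upgrading $L^1$-convergence of the $\varphi_j$ to locally uniform convergence on $\Omega$ via a ``local $L^1$--$L^\infty$ stability estimate''. No such estimate is available without boundary control: stability for the local Dirichlet problem requires the competitors to agree on $\partial B$, and the restrictions $\varphi_j|_{\partial B}$ are not controlled. Nor does $L^1$-convergence of uniformly bounded psh functions imply weak convergence of their Monge--Amp\`ere measures. The paper avoids this entirely by the envelope technique: setting $\Phi_j^+:=\bigl(\sup_{\ell\geq j}\varphi_\ell\bigr)^*$ and $\Phi_j^-:=P_\omega\bigl(\inf_{\ell\geq j}\varphi_\ell\bigr)$, Lemma \ref{lem:envmin} gives $(\omega+dd^c\Phi_j^+)^n\geq c_j^-f_j^-\,dV_X$ and $(\omega+dd^c\Phi_j^-)^n\leq e^{\Phi_j^--\inf_{\ell\geq j}\varphi_\ell}c_j^+f_j^+\,dV_X$. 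Since $\Phi_j^+\searrow\varphi$ and $\Phi_j^-\nearrow\Phi'\leq\varphi$ are \emph{monotone}, Bedford--Taylor continuity applies, yielding $(\omega+dd^c\varphi)^n\geq cf\,dV_X$ and $(\omega+dd^c\Phi')^n\leq e^{\Phi'-\varphi}cf\,dV_X$; then Proposition \ref{prop: domination principle unbounded} forces $\Phi'=\varphi$, so equality holds. This is the missing idea in your proposal.

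Continuity of $\varphi$ on $\Omega$ is also not a consequence of ``the same local theory''; the paper gives a separate argument via Demailly regularization of $\varphi$ and an auxiliary equation, after first reducing (by equisingular approximation) to the case where $\psi$ has analytic singularities and is smooth off $E_{q^{-1}}(\psi)$---a reduction you omitted.
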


\begin{proof}
{\it Reduction to analytic singularities.}
We let $q$ denote the conjugate exponent of $p$,  set $r=\frac{2p}{p+1}$,
and note that $1 < r< p$.
If the Lelong numbers of $\p$ are all less than $\frac{1}{q}$, it follows from H\"older inequality that
$f \in L^r(dV_X)$, since
$$
\int_X f^r dV_X =\int_X g^r e^{-r \p} dV_X
\leq \left( \int_X g^p dV_X \right)^{\frac{r}{p}} \cdot 
\left( \int_X e^{-\frac{pr}{p-r} \p} dV_X \right)^{\frac{p-r}{p}} ,
$$
  where the last integral is finite by Skoda's integrability theorem  \cite[Theorem 8.11]{GZbook}
 if  $\frac{pr}{p-r} \nu(\p,x)<2$
 for all $x \in X$, which is equivalent to $\nu(\p,x)<\frac{1}{q}$.
 
 It is thus natural to expect that the solution $\f$ will be locally bounded in the complement 
 of the closed analytic set $E_{q^{-1}}(\p)$. 
 It follows from Demailly's equisingular approximation technique (see \cite{Dem15}) that
 there exists a sequence $(\p_m)$ of quasi-psh functions on $X$ such that
 \begin{itemize}
 \item $\p_m \geq \p$ and $\p_m \rightarrow \p$ (pointwise and in $L^1$);
 \item $\p_m$ has analytic singularities, it is smooth in $X \setminus E_{m^{-1}}(\p)$;
 \item $dd^c \p_m \geq -K \omega_X$, for some uniform constant $K>0$;
 \item $\int_X e^{2m(\p_m-\p)} dV_X <+\infty$ for all $m$.
 \end{itemize}
  We choose $m=[q]$, set
  $g_m:=g e^{\p_m-\p}$, and observe that
 \begin{eqnarray*}
 \int_X g_m^r  & \leq  &
 \left( \int_X e^{2m(\p_m-\p)}dV_X \right)^{\frac{1}{2m}} \cdot 
\left( \int_X   g_m^{\frac{2mr}{2m-r}}  dV_X \right)^{\frac{2m-r}{2m}}  \\
& \leq & \left( \int_X e^{2m(\p_m-\p)}dV_X \right)^{\frac{1}{2m}} \cdot 
\left( \int_X   g_m^{p}  dV_X \right)^{\frac{2m-r}{2m}} <+\infty
 \end{eqnarray*}
  if we choose $r^{-1}=p^{-1}+(2m)^{-1}<1$ so that $\frac{2mr}{2m-r}=p$.
 By replacing $\p$ by $\p_{[q]} \geq \p$ and $g$ by $g_m \in L^r$
     in the sequel, we can thus  
  assume that $\p$
  has analytic singularities and
   is smooth in $X \setminus E_{q^{-1}}(\p)$.

\medskip

\noindent  {\it Bounded approximation.}
 The densities $f_j:= g e^{-\max(\psi,-j)}$ belong to $L^p$ and
  increase towards $f$. By Theorem \ref{thm: bounded sol semipositive}
  there exists $\f_j \in \PSH(X,\omega) \cap L^{\infty}(X)$ such that
  \[
  (\omega+dd^c \f_j)^n = c_j f_j dV_X, 
  \]
where $c_j>0$ and  $\sup_X \f_j=0$.
  
Observe that $dV_X \geq \delta_1 \omega_X^n$ for some $\delta_1>0$. 
 It follows therefore from the arithmetico-geometric means inequality that 
 $$
 c_j^{1/n} f_j^{1/n} \delta_1^{1/n} \omega_X^n \leq  (\omega+dd^c \f_j) \wedge \omega_X^{n-1}.
 $$
 As $j$ increases to $+\infty$, we have  $\int_X f_{j}^{1/n}  \omega_X^n \rightarrow \int_X f^{1/n} \omega_X>0$, while
 $$
 \int_X (\omega+dd^c \f_j) \wedge \omega_X^{n-1} \leq  \int_X \omega \wedge \omega_X^{n-1} 
 +B \int_X (-\f_j)  \omega_X^{n},
 $$
 using that $dd^c \omega_X^{n-1}  \leq B \omega_X^n$ and $\f_j \leq 0$.
 Thus the positive constants $c_j$ are uniformly bounded from above
  (the functions $\f_j$ are   $\omega$-psh and normalized, hence they 
 are relatively compact in $L^1(X)$).
 
 They are also 
 bounded   away from zero, since
 $$
 c_{j} \int_X f_j dV_X =\int_X  (\omega+dd^c \f_j)^n \geq v_-(\omega) ,
 $$
 and $\int_X f_j dV_X  \rightarrow \int_X f dV_X =1$.

  \medskip
 
\noindent  {\it $L^{\infty}$-estimates.}
Fix $A>0$ large enough so that $\omega+dd^c \p \geq -A \omega_{\rho}$.
Since $c \leq c_j \leq C $,  $f_j \leq f$ where $f$ belongs to a fixed Orlicz class $L^w$,
$f_j \leq ge^{-A \tilde{\p}}$ where $\tilde{\p}=\frac{\p}{A}+\rho \in \PSH(X,\omega)$,
 it follows from Theorem \ref{thm:c0relative} that 
  \[
  \alpha (A^{-1}\psi+\rho) - \beta \leq \f_j\leq 0. 
  \]
Thus  the functions $\f_j$ are locally uniformly bounded in $\Omega:=X \setminus \{\rho+\p=-\infty\}$.

We can thus extract a subsequence  such that
  $c_{j} \longrightarrow  c>0$ and  
\begin{itemize}
\item $\f_j \rightarrow \f$ a.e. and in $L^1$, with $\f \in \PSH(X,\omega)$; 
\item $\sup_X \f=0$, as follows from Hartogs lemma (see \cite[Theorem 1.46]{GZbook});
\item $ \alpha (A^{-1}\p+\rho)-\beta \leq \f \leq 0$ in $\Omega$,
\end{itemize}
hence the Monge-Amp\`ere measure $(\omega+dd^c \f)^n$ is well-defined in $\Omega$.

  \medskip
 
 \noindent  {\it Convergence of the approximants.} 
  Set
$$
\Phi_j^+:=\left( \sup_{\ell \geq j} \f_{\ell} \right)^*
\; \; \text{ and } \; \; 
\Phi_j^-:=P_{\omega}\left( \inf_{\ell \geq j} \f_{\ell} \right).
$$
The function $\Phi_j^+$ (resp. $\Phi_j^-$) is $\omega_j$-psh (resp. $\omega$-psh) and  locally bounded in $\Omega$.
Note that $\Phi_j^+$ decreases to $\f$, while $\Phi_j^-$ increases to some $\omega$-psh function
$\Phi' \leq \f$. We are going to show that $\Phi'=\f$.

We also set
$
f_j^+:= \sup_{\ell \geq j} f_{\e_{\ell}}, \;
f_j^-:= \inf_{\ell \geq j} f_{\e_{\ell}}, \;
c_j^+=\sup_{\ell \geq j} c_{\e_{\ell}}, \;
c_j^-=\inf_{\ell \geq j} c_{\e_{\ell}}.
$
 It follows from Lemma \ref{lem:envmin} that
 $$
(\omega+dd^c \Phi_j^+)^n \geq c_j^- f_j^- dV_X.
 $$
 Since $\Phi_j^+$ decreases to $\f$, while $c_j^- f_j^-$ increases to $cf$, it follows from the continuity
 of the complex Monge-Amp\`ere along monotonic sequences that
 $$
(\omega+ dd^c \f)^n \geq c f dV_X
\; \; \text { in } \; \;
\Omega.
 $$

  \smallskip

 It follows from Lemma \ref{lem:envmin} that
 $$
(\omega+dd^c \Phi_j^+)^n \geq c_j^- f_j^- dV_X.
 $$
 Since $\Phi_j^+$ decreases to $\f$, while $c_j^- f_j^-$ increases to $cf$, it follows from the continuity
 of the complex Monge-Amp\`ere along monotonic sequences that
 $$
(\omega+ dd^c \f)^n \geq c f dV_X
\; \; \text { in } \; \;
\Omega.
 $$
It also follows  from Lemma \ref{lem:envmin} that
 $$
(\omega+ dd^c \Phi_j^-)^n \leq e^{\Phi_j^--\inf_{\ell \geq j} \f_j}c_j^+ f_j^+ dV_X,
 $$
 so $\Phi_j^-$ increase to some $\Phi' \in \PSH(X,\omega)$ such that 
 $$
(\omega+ dd^c \Phi')^n \leq  e^{\Phi'-\f}c f dV_X \leq e^{\Phi'-\f} (\omega+dd^c \f)^n
 $$
 in $\Omega$.
 It thus follows from  Proposition \ref{prop: domination principle unbounded}, that $\Phi' \geq \f$, hence $\Phi' =\f$
 since by construction $\Phi' \leq \f$.
Altogether this shows that $\f$  solves the desired equation.

 \smallskip
 
  The   uniqueness of the constant $c>0$    follows from Corollary \ref{cor:unique2}.

 
 
 
 
 \medskip
 
 \noindent  {\it Continuity of the solutions.}  
 We finally prove that any  solution $\phi \in \PSH(X,\omega)$ satisfying 
 $\inf_X(\phi - \varepsilon \rho) >-\infty$, for all $\varepsilon>0$, is continuous in $\Omega$. 
We can assume without loss of generality that $\phi \leq -1$. 
  Note that $\phi$ has zero Lelong number everywhere on $X$.  
   Using Demailly's regularization we can thus find $\phi_j$  smooth functions  
   such that $\omega+dd^c \phi_j \geq -2^{-j} \omega_X$, and $\phi_j \searrow \phi$. Fix $\varepsilon >0$ and solve 
 \[
 (\omega+dd^c u_{j,\varepsilon})^n = e^{u_{j,\varepsilon}}(g_{j,\varepsilon} e^{-2\varepsilon^{-1} \phi} +h) dV_X,
 \]
 where $h= \omega^n/dV_X$, $g_{j,\varepsilon} = \varepsilon^{-n}{\bf 1}_{\{\phi <\phi_j - \varepsilon\}}g$. 
 Observe that  $g_{j,\varepsilon} e^{-\varepsilon^{-1} \phi} \to 0$   in $L^p(X)$ as $j\to +\infty$.
 It follows from Theorem \ref{thm: stab sem pos} that
   the functions $u_{j,\varepsilon}$  uniformly converge to $0$ as $j\to +\infty$. Consider 
 \[
 v_{j,\varepsilon} := (1-\varepsilon) \phi_j  + \frac{\varepsilon}{2} (\psi + A\rho + u_{j,\varepsilon}) - C\varepsilon,
 \]
where $A>0$ is a constant ensuring that $\omega+dd^c (\psi + A\rho) \geq 0$, and $C>0$ is a constant chosen below. 
 A direct computation shows that 
 \[
 (\omega+dd^c v_{j,\varepsilon})^n \geq 2^{-n} e^{u_{j,\varepsilon}} {\bf 1}_{\{\phi <\phi_j - \varepsilon\}}g e^{-\varepsilon^{-1} \phi}dV_X.  
 \]
 Recall that, by Theorem \ref{thm:c0relative}, $\phi_j \geq \phi \geq \frac{1}{2} (\psi +A\rho) -C_1$, for some positive constant $C_1$. 
 On the set $\{\phi < v_{j,\varepsilon}\}$ we have 
 \[
 \phi < \phi_j - \varepsilon \left(\phi_j - \frac{1}{2} (\psi +A\rho)\right) + \frac{\varepsilon}{2}\sup_X |u_{j,\varepsilon}| -C\varepsilon<\phi_j- \varepsilon, 
 \]
 if we choose $C>1+ \sup_X |u_{j,\varepsilon}| +C_1$. On this set we also have 
 \[
 2^{-n} e^{u_{j,\varepsilon}} g e^{-\varepsilon^{-1} \phi} \geq 2^{-n} e^{-\sup_X |u_{j,\varepsilon}|} e^{C - \psi} \geq 2 g e^{-\psi}, 
 \]
 if we choose $C>2^n + \sup_X|u_{j,\varepsilon}|+ \ln(2)$. Thus
 \[
 (\omega+dd^c \phi)^n \leq 2^{-1}(\omega+dd^c v_{j,\varepsilon})^n = 2^{-1}(\omega+dd^c \max(\phi,v_{j,\varepsilon}))^n  
 \]
 on the set $\{\phi<v_{j,\varepsilon}\}$. 
Thus  $\phi \geq \max(\phi,v_{j,\varepsilon})\geq v_{j,\varepsilon}$
by  Proposition \ref{prop: domination principle unbounded}. 
Fixing a compact set $K\Subset \Omega$ and letting $j\to +\infty$ we obtain 
 \[
 \liminf_{j\to +\infty}\inf_K (\phi- \phi_j) \geq -C'\varepsilon. 
 \]
 Since $\phi_j\to \phi$ uniformly  on $K$ as $\varepsilon\to 0$, we conclude that $\phi \in {\mathcal C}^0(\Omega)$. 
\end{proof}

\begin{remark}
This relative $L^{\infty}$-bound requires one to assume both that $\omega$ is   big and $v_-(\omega)>0$.
It is natural to expect that it suffices to assume that $\omega$ is big, however such an estimate is an open
problem even in the simplest case when $\omega$ is a hermitian form !
On the other hand these two conditions are expected to be equivalent; such is the case e.g.
when $X$ belongs to the Fujiki class (see \cite{GL21b}).
\end{remark}

  \section{Geometric applications} 
 
In this section we apply the previous analysis on compact complex varieties $V$ with {\it mild singularities}.
 We refer the reader to \cite[Section 5]{EGZ09} for an introduction to complex analysis
 on singular varieties. Roughly speaking one can consider
 local embeddings $j_{\alpha} V_{\alpha} \hookrightarrow \C^N$
 and consider objects (quasi-psh functions, forms, etc) that are restrictions of global ones.
 
One can also consider $\pi:X \rightarrow V$ a resolution of singularities and pull-back
these objects to $X$ in order to study the corresponding equations in a smooth environment.
The draw-back is a loss of positivity along some divisor $E=\pi^{-1}(V_{sing})$ which lies above
the singular locus of $V$. Considering a hermitian form $\omega_V$ on $V$, we are thus lead
to work with $\omega=\pi^* \omega_V$, which is  semi-positive and big
but no longer hermitian. We fix a $\omega$-psh function $\rho$ with analytic singularities 
along a divisor $E$ such that $\omega+dd^c \rho \geq 3 \delta \omega_X$, for some $\delta>0$.

 \subsection{Higher order regularity of  solutions} \label{sec:reg}
 
 In this section we establish  higher order regularity of the solution
 under appropriate assumptions on the density.

   \begin{thm} \label{thm:higher0}
  Assume $\omega$ is big or $v_-(\omega)>0$.
  Assume $ f=e^{\p^+-\p^-} \in L^p(dV_X)$,  $p>1$, and $f$ is smooth and positive in $X \setminus D$, with
$\p^{\pm}$     quasi-plurisubharmonic functions.
Then there exist   $c>0$ and $\f \in \PSH(X,\omega)$ such that $\sup_X \f=0$ and
\begin{itemize}
\item $\f$ is smooth in the open set $X \setminus (D \cup E)$;
\item $-T \leq \f \leq 0$ in $X$ for some uniform $T>0$;
\item $(\omega+dd^c \f)^n=c fdV_X$ on $X$.
\end{itemize} 
 \end{thm}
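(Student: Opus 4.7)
The plan is to first obtain a bounded solution with the desired oscillation bound, then upgrade its regularity on $X \setminus (D \cup E)$ by a smooth approximation argument combined with local interior estimates.

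Existence of a bounded solution and the oscillation bound both come directly from earlier results in the paper. By Theorem \ref{thm: bounded sol semipositive} there is a unique $c = c(\omega, f) > 0$ and a bounded $\f \in \PSH(X,\omega) \cap L^\infty(X)$ with $\sup_X \f = 0$ satisfying $(\omega + dd^c \f)^n = c f dV_X$ on $X$. Since $f \in L^p(dV_X)$ with $p > 1$, H\"older's inequality ensures $\PSH(X,\omega) \subset L^m(\mu)$ for some $m > n$, with $\mu = f dV_X$, so Theorem \ref{thm:uniformhermitian} yields the uniform bound $-T \leq \f \leq 0$.

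To establish smoothness on $X \setminus (D \cup E)$, I would build a smooth approximating sequence. Using Demailly's regularization, choose smooth decreasing approximants $\p^\pm_j \searrow \p^\pm$ with $dd^c \p^\pm_j \geq -C \omega_X$, and set $f_j = e^{\p^+_j - \p^-_j}$, which is smooth, strictly positive on $X$, and converges to $f$ in $L^p$. On the hermitian form $\omega_j = \omega + 2^{-j}\omega_X$, the Tosatti--Weinkove theorem \cite{TW10} provides smooth $\f_j$ and constants $c_j > 0$ with $\sup_X \f_j = 0$ satisfying $(\omega_j + dd^c \f_j)^n = c_j f_j dV_X$. Theorem \ref{thm:uniformhermitian} gives a uniform oscillation bound on $\f_j$ (using $v_-(\omega_j) \geq v_-(\omega)$ when positive, or a bounded subsolution from Lemma \ref{lem: subsol big} in the big case), and then the stability estimate of Theorem \ref{thm: stab sem pos} forces $\f_j \to \f$ uniformly on $X$ and $c_j \to c$.

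For the higher order regularity, fix a compact $K \subset X \setminus (D \cup E)$ and a relatively compact neighborhood $U$ with $\overline{U} \subset X \setminus (D \cup E)$. On $\overline{U}$ the potential $\rho$ is smooth and bounded, the densities $f_j$ are smooth and uniformly controlled above and below away from zero, and $\omega + dd^c \rho \geq 3\delta \omega_X$. I would apply a Tsuji trick: for fixed small $\eta > 0$ set $u_j := \f_j - \eta\rho$, which satisfies
\[
(\omega_j + \eta \, dd^c \rho + dd^c u_j)^n = c_j f_j dV_X
\]
with respect to the reference form $\omega_j + \eta \, dd^c \rho \geq 2\eta\delta\,\omega_X$ on $\overline{U}$, a lower bound independent of $j$. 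Since $u_j$ is uniformly bounded on $\overline{U}$, the local interior $C^2$ estimates for complex Monge-Amp\`ere equations on hermitian manifolds (\emph{cf.} Cherrier \cite{Cher87}, Tosatti--Weinkove \cite{TW10}, Sz\'ekelyhidi \cite{Szek18}) bound $\Delta u_j$ uniformly on $K$. Evans--Krylov and Schauder theory then upgrade this to uniform $C^k(K)$ bounds for all $k$, and after extracting a subsequence $\f_j \to \f$ in $C^\infty_{\rm loc}(X \setminus (D \cup E))$. The equation therefore holds classically on $X \setminus (D \cup E)$ and globally in the pluripotential sense.

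The main obstacle is the uniform interior $C^2$ estimate as $j \to \infty$: the reference form $\omega_j$ is strictly positive but its lower bound collapses with $j$, whereas standard hermitian $C^2$ estimates depend on a quantitative lower bound for the reference form. Absorbing $\eta \, dd^c \rho$ into the reference via the Tsuji trick is precisely what produces a $j$-independent strictly positive background metric on any $\overline{U} \Subset X \setminus E$, so that the interior estimates become uniform and can be passed to the limit.
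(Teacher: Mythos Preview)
Your overall strategy---approximate by smooth data on the hermitian forms $\omega_j=\omega+2^{-j}\omega_X$, obtain uniform $C^0$ bounds, then pass to the limit via higher-order estimates---is exactly the paper's. The difference lies in how the $C^2$ estimate is obtained, and here your argument has a genuine gap.

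The references you cite (\cite{Cher87,TW10,Szek18}) do \emph{not} provide local interior $C^2$ estimates for complex Monge--Amp\`ere equations; they give global estimates on a compact manifold via a maximum principle. Your Tsuji trick with $u_j=\f_j-\eta\rho$ does produce a reference form $\omega_j+\eta\,dd^c\rho$ that is uniformly positive on $X\setminus E$, and since $u_j\to+\infty$ near $E$ this forces the maximum of the relevant auxiliary quantity to lie in $X\setminus E$. But the maximum need not lie in your fixed $\overline{U}\subset X\setminus(D\cup E)$: it could fall near $D$. At such a point the curvature of the density enters through $-\Tr_{\omega_X}\Ric(\tilde\omega)$, and since $-\Ric(\tilde\omega)=dd^c(\psi^+_j-\psi^-_j)+\text{smooth}$, the term $-dd^c\psi^-_j$ is \emph{not} uniformly bounded above as $j\to\infty$ near $D$. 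Your setup gives no mechanism to absorb this.

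The paper handles this by building $\psi^-_\e$ into the barrier as well: the auxiliary function is $H=\log\Tr_{\omega_X}(\tilde\omega)-\gamma(u)$ with $u=\f_\e-\rho-2\delta\psi^-_\e+\text{const}$. The term $-\rho$ pushes the maximum off $E$ (your Tsuji effect), while the term $-2\delta\psi^-_\e$ contributes $+2\delta\gamma'(u)\Tr_{\tilde\omega}(dd^c\psi^-_\e)$ to $\Delta_{\tilde\omega}H$, which exactly compensates the bad Ricci term $\Tr_{\omega_X}(dd^c\psi^-_\e)\leq \Tr_{\omega_X}(\tilde\omega)\cdot\Tr_{\tilde\omega}(dd^c\psi^-_\e)$. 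This is the crucial algebraic input that makes the estimate uniform in the approximation parameter; without it the $C^2$ bound degenerates.

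A secondary point: your use of Theorem~\ref{thm: stab sem pos} to conclude $\f_j\to\f$ uniformly is problematic, since the case $\lambda=0$ of that theorem requires $f_1\geq c_0>0$, which fails here ($f$ vanishes along $D$). The paper sidesteps this entirely: it does not start from a pre-existing bounded solution, but extracts a convergent subsequence directly from the uniform local $C^k$ bounds on $\f_\e$, so the limit is automatically smooth where needed and no uniqueness or stability statement is required.
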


 We also establish higher regularity under less restrictive assumptions on the density
 $f$, but we then need  to assume that $\omega$ is both big and satisfies $v_-(\omega)>0$,
 in order to have a good relative $L^{\infty}$-bound:

  \begin{thm} \label{thm:higher}
  Assume $\omega$ is big and $v_-(\omega)>0$.
  Assume $ f=e^{\p^+-\p^-} \in L^1(dV_X)$ is smooth and positive in $X \setminus D$, with
$\p^{\pm}$     quasi-plurisubharmonic functions.
Then there exist   $c>0$ and $\f \in \PSH(X,\omega)$ such that $\sup_X \f=0$ and
\begin{itemize}
\item $\f$ is smooth in the open set $X \setminus (D \cup E)$;
\item for any $\alpha>0$ there is $\beta(\alpha)>0$ with $\alpha(\delta \p^-+\rho) -\beta(\alpha) \leq \f \leq 0$ in $X$;
\item $(\omega+dd^c \f)^n=c fdV_X$ in $X \setminus (D \cup E)$.
\end{itemize} 
 \end{thm}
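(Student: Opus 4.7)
The plan is to adapt the proof of Theorem \ref{thm:higher0}, substituting the relative $L^{\infty}$-estimate of Theorem \ref{thm:c0relative} for the global $L^{\infty}$-estimate of Theorem \ref{thm:uniformhermitian}, and combining it with a smooth approximation plus interior regularity on $X\setminus(D\cup E)$.

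First I would regularize the data. Using Demailly's regularization, pick sequences of smooth quasi-plurisubharmonic functions $\p^+_j$ and $\p^-_j$ decreasing to $\p^+$ and $\p^-$ respectively, with $dd^c \p^{\pm}_j \geq -C\omega_X$ for a uniform $C>0$, and set $f_j := e^{\p^+_j - \p^-_j}$, which is smooth, strictly positive and bounded on $X$. Letting $\omega_j := \omega + 2^{-j}\omega_X$ be hermitian, the Tosatti-Weinkove theorem \cite{TW10} produces a unique pair $(c_j,\f_j)$ with $c_j>0$ and $\f_j \in C^{\infty}(X)$ satisfying
\[
(\omega_j + dd^c \f_j)^n = c_j f_j\, dV_X,\qquad \sup_X \f_j = 0.
\]
Uniform positive upper and lower bounds on $c_j$ follow from Lemma \ref{lem:AGM} (as in the proof of Theorem \ref{thm:bdd}) for the upper bound, and from a subsolution provided by Lemma \ref{lem: subsol big} combined with the comparison Corollary \ref{cor:unique} for the lower bound.

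Next I would establish a uniform relative lower bound on $\f_j$. Fixing $A_0 > 0$ with $dd^c \p^- \geq -A_0\,\omega_X$, I set $\tilde\psi := (\delta/A_0)\p^- + \rho$ and observe that
\[
\omega + dd^c \tilde\psi \geq 3\delta\,\omega_X - \delta\,\omega_X = 2\delta\,\omega_X \geq 0,
\]
so $\tilde\psi \in \PSH(X,\omega)$. The inequality $f_j \leq e^{\sup_X \p^+}\, e^{-\p^-}$ can be rewritten as $f_j \leq g\, e^{-A\tilde\psi}$ with $g := e^{\sup_X \p^+}\, e^{(A_0/\delta)\rho}$ bounded (recall $\rho \leq 0$) and $A := A_0/\delta$. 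Applying Theorem \ref{thm:c0relative} with $A/\alpha$ and $\alpha\tilde\psi$ in place of $A$ and $\p$ yields the uniform estimate
\[
\alpha\tilde\psi - \beta(\alpha) \leq \f_j \leq 0
\]
for every $\alpha > 0$, uniformly in $j$. After relabelling constants (the $\delta$ in the announced bound is to be understood as the rescaled $\delta/A_0$), this is the desired lower bound.

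Finally I would pass to the limit via interior regularity. On any compact $K \Subset X\setminus(D\cup E)$ the densities $f_j$ are smooth, uniformly bounded away from $0$ and $+\infty$ and converge smoothly to $f$, while the previous step gives a uniform $L^{\infty}$-bound on $\f_j|_K$. The semi-positivity $\omega_j + dd^c \f_j \geq 0$ combined with the uniform positivity of $(\omega_j + dd^c \f_j)^n = c_j f_j\, dV_X$ on $K$ forces this form to be uniformly positive-definite there, so the equation is uniformly elliptic on $K$. Interior second-order estimates for the complex Monge-Amp\`ere equation in the hermitian setting (cf.\ \cite{TW10,STW17,Blo11}), followed by Evans-Krylov and Schauder theory, then yield uniform $C^k$-bounds on compact subsets of $X\setminus(D\cup E)$ for every $k$. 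Extracting a diagonal subsequence I obtain $\f_j \to \f$ in $C^{\infty}_{\mathrm{loc}}(X\setminus (D\cup E))$ and in $L^1(X)$, with $c_j \to c > 0$; the limit $\f \in \PSH(X,\omega)$ inherits the relative lower bound and solves $(\omega + dd^c \f)^n = c\,f\, dV_X$ classically on $X\setminus (D\cup E)$, with uniqueness of $c$ provided by Corollary \ref{cor:unique2}. The hard part is precisely this uniform interior $C^2$-estimate: the classical hermitian $C^2$-estimates are global maximum-principle arguments, so they must be carefully localized via cut-offs, exploiting the uniform ellipticity on $K$.
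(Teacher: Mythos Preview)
Your overall strategy (regularize, solve the smooth hermitian problem, get relative $C^0$-bounds via Theorem \ref{thm:c0relative}, upgrade to $C^\infty$ on compacta, extract a limit) is the same as the paper's. The genuine gap is your $C^2$-step.

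The sentence ``the semi-positivity $\omega_j+dd^c\f_j\ge 0$ combined with the uniform positivity of $(\omega_j+dd^c\f_j)^n$ on $K$ forces this form to be uniformly positive-definite there'' is false: a positive form with determinant bounded below can have eigenvalues going to $0$ (compensated by others going to $+\infty$), so you have \emph{no} uniform ellipticity a priori. This is precisely what a Laplacian estimate is supposed to rule out, so invoking uniform ellipticity to obtain that estimate is circular. Your fallback to ``interior second-order estimates \cite{TW10,STW17,Blo11}'' doesn't save the argument: those are global maximum-principle estimates on a strictly hermitian background with smooth positive right-hand side; localizing them by cut-offs on a compact $K\Subset X\setminus(D\cup E)$ is not standard and you don't indicate how to do it.

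The paper avoids localization entirely. It runs a \emph{global} maximum principle on $X$ for the quantity
\[
H=\log \Tr_{\omega_X}(\omega_\e+dd^c\f_\e)-\gamma(u),\qquad u=\f_\e-\rho-2\delta\p^-_\e+\beta+1,
\]
with $\gamma$ concave increasing. Because $\rho\to-\infty$ on $E$, the maximum of $H$ lands in $X\setminus E$; because the Chern--Ricci term produced by $dd^c\log f_\e$ is controlled by $dd^c\p^-_\e\ge -\omega_X$, the $\p^-_\e$-part of $u$ absorbs it. A careful choice of $\gamma$ (affine plus $\log$) then yields a bound $H\le C$ independent of $\e$, hence $\Tr_{\omega_X}(\omega_\e+dd^c\f_\e)\le C_K$ on compacta of $X\setminus(D\cup E)$ via the relative $C^0$-bound on $u$. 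Only \emph{after} this trace bound does one get uniform ellipticity on compacta, and then Evans--Krylov and Schauder apply. You should replace your third paragraph by this computation; the quasi-psh structure of $\p^-$ and the choice of the barrier $u$ are the missing ideas.
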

 
One can keep track of the dependence of the constants   on the data,
as was done in Theorem \ref{thm:bdd}.
  This   result can be seen as a generalization of the main result of \cite{TW10} which dealt with the case
 when $\omega>0$ is hermitian on $X$ and $f$ is globally smooth and positive.

 \begin{proof} 
 We prove Theorem \ref{thm:higher0} and Theorem \ref{thm:higher} at once.
 
 \smallskip
 
\noindent  {\it Smooth approximation.}
 We assume without loss of generality that $\p^{\pm} \leq 0$. For a Borel function $g$ we let $g_{\e}:=\rho_{\e}(g)$ denote the Demailly regularization of $g$ (see \cite[(3.1)]{Dem94}):
 
\[
\rho_{\e}(g)(x) :=  \frac{1}{\e^{2n}} \int_{\zeta \in T_xX} g({\rm exph}_x(\zeta)) \chi\left (\frac{|\zeta|^2}{\e^2} \right ) d\lambda (\zeta). 
\]
  Since $\p^-$ is quasi-psh, the corresponding 
regularization $\p^-_{\e}$ satisfies $\p^- \leq \p^-_{\e}+A \e^2$,
 while $\p^+_{\e} \leq0$. 
 Moreover the functions $\p_\e^{\pm}$ are quasi-psh with
$dd^c \p_\e^{\pm} \geq -K^{\pm} \omega_X$ for uniform constants $K^{\pm} \geq 0$. In particular
$$
K^- \omega+ dd^c \left( \delta \p_\e^{-}+K^- \rho \right) \geq 
-K^-\delta \omega_X+K^- (\omega+dd^c \rho) \geq 0,
$$
so $\alpha \p_\e^{-}+ \rho \in \PSH(X, \omega)$ for all $0<\alpha< \frac{\delta}{K^-}$. Up to replacing $\delta$ with $\frac{\delta}{K^-}$,  we assume in the sequel that $K^-=1$. 
 
\smallskip

We fix $0< \e \leq 1$ and set $\omega_{\e}:=\omega+\e \omega_X$.
 It follows from   \cite{TW10} that there exist unique constants $c_\e>0$ and  smooth
 $\omega_{\e}$-psh functions $\f_\e$   such that
 $$
 (\omega+\e \omega_X+dd^c \f_\e)^n=c_\e e^{\psi^+_{\e}-\psi^-_{\e}} dV_X.
 $$
 with  $\sup_X \f_\e=0$.
 Note that by Jensen inequality we have 
$$
e^{\psi^+_{\e}-\psi^-_{\e}}\leq  f_{\e}.
$$
Since $f_{\e}$ converges to $f$ in $L^1(X,dV_X)$, extracting a subsequence we can assume that  $e^{\psi^+_{\e}-\psi^-_{\e}}\leq F$, for some $F\in L^1(dV_X)$ which  belongs to an Orlicz class $L^{w}$, for some convex increasing weight $w:\R^+ \rightarrow \R^+$ such that $\frac{w(t)}{t} \rightarrow +\infty$ as $t \rightarrow +\infty$. 

\medskip

\noindent  {\it ${\mathcal C}^0$-estimates.} 
This is the only place where the proofs of Theorem \ref{thm:higher0} and Theorem \ref{thm:higher} slightly differ,
and also the reason why the assumptions on $\omega$ look slightly stronger in the second case.

Arguing as in the proof of Theorem \ref{thm:bdd}, one shows that the   constants $c_{\e}$ are uniformly bounded   away from zero and $+\infty$. 
If $f$ belongs to $L^p(dV_X)$ for some $p>1$, it follows from Theorem \ref{thm:uniformhermitian} (if $v_-(\omega)>0$)
and Theorem \ref{thm: bounded sol semipositive} (if $\omega$ is big) that  $-T \leq \f_{\e} \leq 0$, for some uniform constant $T$
independent of $\e>0$.

If we rather assume that $f \in L^1(dV_X)$ with $\omega$ big {\it and}  $v_-(\omega)>0$, 
note that 
$$
(\omega+\varepsilon \omega_X+ dd^c \f_{\varepsilon})^n \leq Ce^{-A(\delta \psi^-_{\e}+ \rho)}dV_X, 
$$
with $A=\delta^{-1}$.
 It then follows from Theorem \ref{thm:c0relative} that
for any $\alpha>0$ there exists $\beta(\alpha)$ such that
 \begin{equation}
	\label{eq: C0 estimate}
  \alpha (\delta\p^-_{\e} +\rho)-\beta(\alpha) \leq \f_{\e} \leq 0.
\end{equation}
Thus $(\f_{\e})$ is locally uniformly bounded in $\Omega:=X \setminus (E \cup D)$.

  \medskip
 
\noindent  {\it ${\mathcal C}^2$-estimates.} 
In the sequel we fix $\alpha=1$
and we establish uniform bounds on $\Delta_{\omega_X} \f_\e$ on compact subsets of $\Omega$.
 We follow the computations of \cite[Proof of Theorem 2.1]{TW10a} and \cite{To18} 
 with a twist in order to deal with unbounded functions. 
 We use $C$ to denote various uniform constants which may be different. Consider
 $$
 H:=\log {\rm Tr}_{\omega_X}(\tilde{\omega})- \gamma(u)
 $$
 where 
 $$
 \tilde{\omega}=\omega_{\e}+dd^c \f_{\e},
 \; \;
 u= (\f_{\e}-\rho-2\delta\psi^-_\e+\beta +1)>1, 
 $$
 and $\gamma: \mathbb{R} \rightarrow \mathbb{R}$ is a smooth concave increasing function such that $\gamma(+\infty)=+\infty$.
 We are going to show that $H$ is uniformly bounded from above for an appropriate choice of
 $\gamma$. Since $u$ is uniformly bounded on compact subsets of $\Omega$, this will
 yield for each $\e>0$ and $K \subset \subset \Omega$ a uniform bound
 $$
 \Delta_{\omega_X}(\f_{\e})=
 {\rm Tr}_{\omega_X}(\omega_{\e}+dd^c \f_{\e}) - {\rm Tr}_{\omega_X}(\omega_{\e}) \leq C_K.
 $$
 
 We let ${g}$ denote the Riemannian metric associated to $\omega_X$
 and $\tilde{g}$ the one associated to $\tilde{\omega}=\omega_\e+dd^c \f_{\e}$. 
 To simplify notations we will omit the subscript $\varepsilon$ in the sequel. Since $\psi^{\pm}$ are quasi-psh, up to multiplying $\omega_X$ with a large constant, we can assume that $\omega_X+dd^c \psi^{\pm} \geq 0$. 
 
 Since $\rho \rightarrow -\infty$ on 
 $E$
 the maximum of $H$ is attained at some point $x_0 \in X \setminus E$.  
 We use special 
 coordinates at this point, as defined by Guan-Li in \cite{GL10}: 
 \[
 g_{i\bar{j}} = \delta_{ij}, \; \;  
 \frac{\partial g_{i\bar{i}}}{\partial z_j}=0 
 \; \; \text{and}\; \; \tilde{g}_{i\bar{j}}\; \text{is diagonal}.
 \]
 To achieve this we use a linear change of coordinates so that $g_{i\bar{j}}=\delta_{ij}$ and $\tilde{g}_{i\bar{j}}$ is diagonal at $x_0$,
 and we  then make a change of coordinates as in \cite[(2.19)]{GL10}. 
 We first compute 
 \begin{flalign}
 	\Delta_{\tilde{\omega}} {\rm Tr}_{\omega_X}(\tilde{\omega})
 	&= \sum_{i,j,k,l} \tilde{g}^{i\bar{j}}\partial_i \partial_{\bar{j}} (g^{k\bar{l}} \tilde{g}_{k\bar{l}})\nonumber \\
 	&=\sum_{i,k}\tilde{g}^{i\bar{i}} \tilde{g}_{k\bar{k}i\bar{i}} -2 \Re\left (\sum_{i,j,k} \tilde{g}^{i\bar{i}}g_{j\bar{k}\bar{i}} \tilde{g}_{k\bar{j}i}\right) + \sum_{i,j,k} \tilde{g}^{i\bar{i}}g_{j\bar{k}i} g_{k\bar{j}\bar{i}} \tilde{g}_{k\bar{k}}\nonumber \\
 	&+\sum_{i,j,k} \tilde{g}^{i\bar{i}}g_{k\bar{j}i} g_{j\bar{k}\bar{i}} \tilde{g}_{k\bar{k}} -\sum_{i,k} \tilde{g}^{i\bar{i}}g_{k\bar{k}\bar{i}i}\tilde{g}_{k\bar{k}}\nonumber\\
 	&\geq \sum_{i,k}\tilde{g}^{i\bar{i}} \tilde{g}_{k\bar{k}i\bar{i}} -2 \Re\left (\sum_{i,j,k} \tilde{g}^{i\bar{i}}g_{j\bar{k}\bar{i}} \tilde{g}_{k\bar{j}i}\right) -C{\rm Tr}_{\omega_X}(\tilde{\omega}){\rm Tr}_{\tilde{\omega}}(\omega_X). \nonumber 
 \end{flalign}
Using this and 
\[
{\rm Tr}_{\omega_X} {\rm Ric}(\tilde{\omega})=\sum_{i,k} \tilde{g}^{i\bar{i}} \left (-\tilde{g}_{i\bar{i}k\bar{k}} + \sum_j \tilde{g}^{j\bar{j}}\tilde{g}_{i\bar{j}k}\tilde{g}_{j\bar{i}\bar{k}}\right)
\]
we obtain
\begin{flalign}
		\Delta_{\tilde{\omega}} {\rm Tr}_{\omega_X}(\tilde{\omega})& \geq \sum_{i,j,k} \tilde{g}^{i\bar{i}} \tilde{g}^{j\bar{j}}\tilde{g}_{i\bar{j}k}\tilde{g}_{j\bar{i}\bar{k}}-{\rm Tr}_{\omega_X} {\rm Ric}(\tilde{\omega}) \label{eq: C2 est 1} - C \Tr_{\tilde{\omega}}(\omega_X) \\
		&-C{\rm Tr}_{\omega_X}(\tilde{\omega}){\rm Tr}_{\tilde{\omega}}(\omega_X)  
		 -2 \Re\left (\sum_{i,j,k} \tilde{g}^{i\bar{i}}g_{j\bar{k}\bar{i}} \tilde{g}_{k\bar{j}i}\right),\nonumber
\end{flalign}
noting that $|\tilde{g}_{i\bar{i}k\bar{k}}-\tilde{g}_{k\bar{k}i\bar{i}}|\leq C$. Here ${\rm Ric}(\tilde{\omega})$ is the Chern-Ricci form of $\tilde{\omega}$. 
From ${\rm Tr}_{\omega_X}(\tilde{\omega}){\rm Tr}_{\tilde{\omega}}(\omega_X)\geq n$,
  \eqref{eq: C2 est 1} and 
$$
{\rm Ric}(\tilde{\omega})= {\rm Ric}(\omega_X) - dd^c (\psi^+-\psi^-) \leq C \omega_X + dd^c \psi^-,
$$
we obtain 
\begin{flalign}
		\Delta_{\tilde{\omega}} {\rm Tr}_{\omega_X}(\tilde{\omega})& \geq \sum_{i,j,k} \tilde{g}^{i\bar{i}} \tilde{g}^{j\bar{j}}\tilde{g}_{i\bar{j}k}\tilde{g}_{j\bar{i}\bar{k}}-{\rm Tr}_{\omega_X} (\omega_X+dd^c \psi^-) - C\Tr_{\tilde{\omega}}(\omega_X) \label{eq: C2 est 2}\\
		&  -2 \Re\left (\sum_{i,j,k} \tilde{g}^{i\bar{i}}g_{j\bar{k}\bar{i}} \tilde{g}_{k\bar{j}i}\right) - C{\rm Tr}_{\omega_X}(\tilde{\omega}){\rm Tr}_{\tilde{\omega}}(\omega_X). \nonumber
\end{flalign}
Our special choice of coordinates at $x_0$ ensures that $g_{j\bar{j}\bar{i}}=0$. 
Using Cauchy-Schwarz inequality and $|\tilde{g}_{k\bar{j}i}-\tilde{g}_{i\bar{j}k}|\leq C$, we therefore obtain
\begin{flalign*}
	\left | 2 \Re\left ( \sum_{i,j,k} \tilde{g}^{i\bar{i}}g_{j\bar{k}\bar{i}} \tilde{g}_{k\bar{j}i}\right)\right| 
	&\leq  \left | 2 \Re\left (\sum_{i}\sum_{j\neq k} \tilde{g}^{i\bar{i}}g_{j\bar{k}\bar{i}} \tilde{g}_{i\bar{j}k}\right) \right | 
	+ C \Tr_{\tilde{\omega}}(\omega_X)\\
	&\leq \sum_{i}\sum_{j\neq k}  \left ( \tilde{g}^{i\bar{i}}\tilde{g}^{j\bar{j}} \tilde{g}_{i\bar{j} k} \tilde{g}_{j\bar{i}\bar{k}}+  \tilde{g}^{i\bar{i}}\tilde{g}_{j\bar{j}} g_{j\bar{k}\bar{i}} g_{k\bar{j}i}\right) + C \Tr_{\tilde{\omega}}(\omega_X)\\
	& \leq \sum_{i}\sum_{j\neq k} \tilde{g}^{i\bar{i}}\tilde{g}^{j\bar{j}} \tilde{g}_{i\bar{j}k} \tilde{g}_{j\bar{i}\bar{k}} + C {\rm Tr}_{\omega_X}(\tilde{\omega}){\rm Tr}_{\tilde{\omega}}(\omega_X)+ C \Tr_{\tilde{\omega}}(\omega_X).
\end{flalign*} 
Together with \eqref{eq: C2 est 2} this yields
\begin{equation}\label{eq: C2 estimate 3}
 \Delta_{\tilde{\omega}}
 {\rm Tr}_{\omega_X} (\tilde{\omega}) 
\geq   I -C {\rm Tr}_{\omega_X} (\omega_X+dd^c \psi^-) -C {\rm Tr}_{\omega_X} (\tilde{\omega})  {\rm Tr}_{\tilde{\omega}}  (\omega_X) -C  \Tr_{\tilde{\omega}}(\omega_X) 
\end{equation}
 with $I:=\sum_{i,j} \tilde{g}^{i \bar{i}}  \tilde{g}^{j \bar{j}} \tilde{g}_{i \bar{j}j}   \tilde{g}_{j \bar{i} \bar{j}}$. We next compute 
\begin{flalign*}
	|\partial \Tr_{\omega_X}(\tilde{\omega}) |_{\tilde{\omega}}^2&=\sum_{i,j,k} \tilde{g}^{i\bar{i}} \tilde{g}_{j\bar{j}i} \tilde{g}_{k\bar{k}\bar{i}}\\
	&= \sum_{i,j,k} \tilde{g}^{i\bar{i}}(T_{ij\bar{j}}+ \tilde{g}_{i\bar{j}j}) (\overline{T}_{ik\bar{k}}+\tilde{g}_{k\bar{i}\bar{k}})\\
	& =\sum_{i,j,k} \tilde{g}^{i\bar{i}} \tilde{g}_{i\bar{j}j} \tilde{g}_{k\bar{i}\bar{k}}  + \sum_{i,j,k} \tilde{g}^{i\bar{i}} T_{ij\bar{j}} \overline{T}_{ik\bar{k}} + 2\Re \left ( \tilde{g}^{i\bar{i}} T_{ij\bar{j}} \tilde{g}_{k\bar{i} \bar{k}} \right).
\end{flalign*}
 where $T_{ij\bar{j}}= \tilde{g}_{j\bar{j}i}-\tilde{g}_{i\bar{j}j}$ is the torsion term corresponding to $\omega+\varepsilon \omega_X$ which is under control: $|T_{ij\bar{j}}|\leq C$. We bound the first term by Cauchy-Schwarz inequality
 \begin{flalign*}
 	\sum_{i,j,k} \tilde{g}^{i\bar{i}} \tilde{g}_{i\bar{j}j} \tilde{g}_{k\bar{i}\bar{k}} &= \sum_{i} \tilde{g}^{i\bar{i}} \left |\sum_{j} \tilde{g}_{i\bar{j}j}\right|^2 \\
 	&\leq \left (\sum_{i} \tilde{g}^{i\bar{i}} \tilde{g}^{j\bar{j}} \tilde{g}_{i\bar{j}j} \tilde{g}_{j\bar{i}\bar{j}} \right) \left (\sum_{j} \tilde{g}_{j\bar{j}}  \right) =I\Tr_{\omega_X}(\tilde{\omega}). 
 \end{flalign*}
 We thus get
  \begin{equation}\label{eq: C2 estimate 4}
\frac{| \partial {\rm Tr}_{\omega_X} (\tilde{\omega}) |^2_{\tilde{\omega}}}{({\rm Tr}_{\omega_X} (\tilde{\omega}))^2}
\leq \frac{I}{{\rm Tr}_{\omega_X} (\tilde{\omega})}
+C\frac{ {\rm Tr}_{\tilde{\omega}}(\omega_X)}{({\rm Tr}_{\omega_X} (\tilde{\omega}))^2}
+\frac{2}{({\rm Tr}_{\omega_X} (\tilde{\omega}))^2}
\Re\left(\sum \tilde{g}^{i \overline{i}}  T_{ij\overline{j}}  \tilde{g}_{k\bar{i}\bar{k}}  \right),
 \end{equation}
 Since $\partial_{\overline{i}} H=0$ at the point $x_0$, we obtain by differentiating $H$ once
  $$
  \tilde{g}_{k\overline{k}\overline{i}}=   {\rm Tr}_{\omega_X} (\tilde{\omega})\gamma'(u)u_{\overline{i}}. 
 $$
Cauchy-Schwarz inequality yields
  \begin{equation*}
\left | \frac{2}{({\rm Tr}_{\omega_X} (\tilde{\omega}))^2}
\Re\left(\sum \tilde{g}^{i \overline{i}}  T_{ij\overline{j}}  \tilde{g}_{k \overline{k} \overline{i}}  \right) \right |
\leq C \frac{\gamma'(u)^2}{(-\gamma''(u))}
\frac{{\rm Tr}_{\tilde{\omega}}(\omega_X)}{({\rm Tr}_{\omega_X} (\tilde{\omega}))^2}
+ (-\gamma''(u)) |\partial u|_{\tilde{\omega}}^2.
 \end{equation*}
Noting that $|\tilde{g}_{k\bar{k}\bar{i}}-\tilde{g}_{k\bar{i}\bar{k}}|\leq C$ we infer
$$
\left | \frac{2}{({\rm Tr}_{\omega_X} (\tilde{\omega}))^2}
\Re\left(\sum \tilde{g}^{i \overline{i}}  T_{ij\overline{j}}  \tilde{g}_{k \overline{i} \overline{k}}  \right) \right |
\leq C \left (\frac{\gamma'(u)^2}{(-\gamma''(u))} +1\right)
\frac{{\rm Tr}_{\tilde{\omega}}(\omega_X)}{({\rm Tr}_{\omega_X} (\tilde{\omega}))^2}
+ (-\gamma''(u)) |\partial u|_{\tilde{\omega}}^2.
$$
Since $0 \geq \Delta_{\tilde{\omega}} H$  at $x_0$, it follows from 
 \eqref{eq: C2 estimate 3}, \eqref{eq: C2 estimate 4} that
  \begin{flalign}\label{eq: C2 est final 1}
0 &\geq \Delta_{\tilde{\omega}} H
= 
 \frac{\Delta_{\tilde{\omega}} {\rm Tr}_{\omega_X} (\tilde{\omega})}{ {\rm Tr}_{\omega_X} (\tilde{\omega})}
 -\frac{| \partial {\rm Tr}_{\omega_X} (\tilde{\omega}) |^2_{\tilde{\omega}}}{({\rm Tr}_{\omega_X} (\tilde{\omega}))^2}
 -\gamma'(u) \Delta_{\tilde{\omega}}(u) - \gamma''(u) |\partial u|_{\tilde{\omega}}^2  \\
 &\geq  -\frac{C {\rm Tr}_{\omega_X} (\omega_X +dd^c \psi^-)}{{\rm Tr}_{\omega_X}(\tilde{\omega})} 
- \gamma'(u) (n - \delta{\rm Tr}_{\tilde{\omega}}( 3\omega_X+ 2dd^c \psi^- )) \nonumber \\
&-C \left (\frac{\gamma'(u)^2}{(-\gamma''(u))}+1\right)
\frac{{\rm Tr}_{\tilde{\omega}}(\omega_X)}{({\rm Tr}_{\omega_X} (\tilde{\omega}))^2}-C \frac{\Tr_{\tilde{\omega}}(\omega_X)}{\Tr_{\omega_X}(\tilde{\omega})} -C \Tr_{\tilde{\omega}}(\omega_X) .\nonumber
 \end{flalign}
 
 \medskip
 
  We now choose the function $\gamma$ so as to obtain a simplified information. We set
 $$
 \gamma(u):= \frac{C+1}{\min(\delta,1)} u + \ln(u). 
 $$
 Since $u\geq 1$, we observe that
 $$
 \frac{C+1}{\min(\delta,1)} \leq \gamma'(u) \leq 1+ \frac{C+1}{\min(\delta,1)} 
\; \; \; \text{ and }\;  \; \;
  \frac{\gamma'(u)^2}{|\gamma''(u)|} +1 \leq C_1u^2.
 $$
By incorporating  this into \eqref{eq: C2 est final 1}  we obtain
\begin{flalign*}
	0 &\geq -\frac{C {\rm Tr}_{\omega_X} (\omega_X +dd^c \psi^-)}{{\rm Tr}_{\omega_X}(\tilde{\omega})} -C_2+ (C+1)(\Tr_{\tilde{\omega}}(\omega_X) + \Tr_{\tilde{\omega}}(\omega_X+dd^c \psi^-))  \\
	&-C_2 (u^2+1)
\frac{{\rm Tr}_{\tilde{\omega}}(\omega_X)}{({\rm Tr}_{\omega_X} (\tilde{\omega}))^2}-C \frac{\Tr_{\tilde{\omega}}(\omega_X)}{\Tr_{\omega_X}(\tilde{\omega})} -C \Tr_{\tilde{\omega}}(\omega_X).
\end{flalign*}
Using
  ${\rm Tr}_{\omega_X} (\omega_X +dd^c \psi^-) \leq {\rm Tr}_{\tilde{\omega}} (\omega_X +dd^c \psi^-){\rm Tr}_{\omega_X} (\tilde{\omega})$ we thus arrive at 
 \begin{equation}
 	\label{eq: C2 est final 2}
 	 0 \geq {\rm Tr}_{\tilde{\omega}}(\omega_X) - C_2(u^2+1) \frac{{\rm Tr}_{\tilde{\omega}}(\omega_X)}{({\rm Tr}_{\omega_X} (\tilde{\omega}))^2}-C\frac{\Tr_{\tilde{\omega}}(\omega_X)}{\Tr_{\omega_X}(\tilde{\omega})}- C_2. 
 \end{equation}
 
\noindent  At the point $x_0$ we have the following alternative:
\begin{itemize}
\item  if ${\rm Tr}_{\omega_X}(\tilde{\omega})^2 \geq 4C_2 (u^2+1)+(4C)^2$ then 
$$
C_2(u^2+1)\frac{{\rm Tr}_{\tilde{\omega}}(\omega_X)}{({\rm Tr}_{\omega_X} (\tilde{\omega}))^2} \leq \frac{{\rm Tr}_{\tilde{\omega}}(\omega_X)}{4}
\; \; \text{and} \; \;
 C\frac{\Tr_{\tilde{\omega}}(\omega_X)}{\Tr_{\omega_X}(\tilde{\omega})} \leq \frac{{\rm Tr}_{\tilde{\omega}}(\omega_X)}{4},
$$
hence from \eqref{eq: C2 est final 2} we get  ${\rm Tr}_{\tilde{\omega}}(\omega_X) \leq 2C_2$. 
Now 
\begin{equation*}
\Tr_{\omega_X}(\tilde{\omega}) 
\leq n \frac{\tilde{\omega}^n}{\omega_X^n} \left( \Tr_{\tilde{\omega}}(\omega_X) \right)^{n-1}
\leq  n(2C_2)^{n-1} c e^{\psi^+-\psi^-}
\end{equation*}
yields ${\rm Tr}_{\omega_X}(\tilde{\omega}) \leq Ce^{\psi^+-\psi^-}$.
It follows therefore from \eqref{eq: C0 estimate} that
\begin{flalign*}
H(x_0) &\leq \log (2C_2) - \psi^- -\frac{C+1}{\min(\delta,1)} (\f - \rho - 2\delta \psi^-)\\
&\leq  	\log (2C_2) - (C+1) (\f -\rho -\delta \psi^-) 
\leq C_3. 
\end{flalign*}
\item   If ${\rm Tr}_{\omega_X}(\tilde{\omega})^2 \leq 4C_2 (u^2+1)+(4C)^2$ then
$$
 H(x_0) \leq \log \sqrt{4C_2(u^2+1) +(4C)^2}- \gamma(u) \leq C_4.
 $$
 \end{itemize}
Thus $H(x_0)$ is uniformly bounded from above, yielding the desired estimate.

\medskip

\noindent  {\it Higher order estimates.}
With uniform bounds on $||\Delta_{\omega_X} \f_\e||_{L^{\infty}(K)}$ in hands, we
can use a complex version of Evans-Krylov-Trudinger 
 estimate
(see \cite[Section 4]{TW10a} in this context) and eventually
differentiate the equation to obtain 
-using Schauder estimates-
uniform bounds,
 for each $K \subset \subset \Omega$, $0 < \beta<1$, $j \geq 0$,
$$
\sup_{\e>0} ||\f_{\e}||_{{\mathcal C}^{j,\beta}(K)}=C_{j,\beta}(K) <+\infty,
$$
 which guarantee that $\f_{\e}$ is relatively compact in ${\mathcal C}^{\infty}(\Omega)$.
 
 We now extract a subsequence $\e_j \rightarrow 0$ such that
 \begin{itemize}
 \item $c_{\e_j} \longrightarrow  c>0$;
 \item $\f_{\e_j} \rightarrow \f$ in $L^1$ with $\f \in \PSH(X,\omega)$ and $\sup_X \f=0$ (Hartogs lemma);
 \item $\f \in {\mathcal C}^{\infty}(\Omega)$ with $(\omega+dd^c \f)^n=c fdV_X$ in $\Omega$;
 \item $\f \geq \alpha(\delta \p^-+\rho) -\beta$ in $X$,
 \end{itemize}
 where $\alpha>0$ is arbitrarily small, as follows from \eqref{eq: C0 estimate}.
When  $f \in L^p(dV_X)$, $p>1$,
the solution $\f$ is  even uniformly bounded on $X$.
 \end{proof}
 
 \begin{remark}
 By comparison with the K\"ahler setting, it is not clear how to make sense of the Monge-Amp\`ere measure
 $(\omega+dd^c \f)^n$ across the singularity divisor $D \cup E$ if $\f$ is unbounded,
 and it is delicate to establish uniqueness of the solution, even if the solution is globally bounded
 (see however Theorem \ref{thm: stab sem pos}).
 \end{remark}

 \subsection{Singular hermitian Calabi conjecture} \label{sec:geom}

Let $V$ be a compact complex variety with {\it log-terminal singularities}, i.e. 
$V$ is  a normal complex space such that the canonical bundle $K_V$
is $\Q$-Cartier and for some (equivalently any) resolution of singularities
$\pi:X \rightarrow V$, we have
$$
K_X=\pi^* K_V+\sum_i a_i E_i,
$$
where the $E_i$'s are exceptional divisors with simple normal crossings,
and the rational coefficients $a_i$ 
(the discrepancies) satisfy $a_i>-1$.

Given $\phi$ a smooth metric of $K_V$ and $\sigma$ a
non vanishing  local holomorphic section
of $K_V$, we consider the "adapted volume form"
$$
\mu_{\phi}:=\left( \frac{i^{rn^2} \sigma \wedge \overline{\sigma}}{|\sigma|^2_{r\phi}} \right)^{\frac{1}{r}}.
$$
This measure is independent of the choice of $\sigma$, and it has finite mass on $V$, since
the singularities are  log-terminal.
Given $\omega_V$ a hermitian form  on $V$, there exists a unique  metric  $\phi=\phi(\omega_V)$ 
of $K_V$ such that 
$$
\omega_V^n=\mu_{\phi}.
$$

\begin{defi}
The Ricci curvature form of $\omega_V$ is ${\rm Ric}(\omega_V):=-dd^c \phi$.
\end{defi}

Recall that the Bott-Chern space $H_{BC}^{1,1}(V,\R)$
is the space of closed real $(1,1)$-forms modulo the image of $dd^c$ acting on real functions.
The form ${\rm Ric}(\omega_V)$ determines a class $c_1^{BC}(V)$ which 
maps to the usual Chern class $c_1(V)$ under the natural surjection $H_{BC}^{1,1}(V,\R) \rightarrow H^{1,1}(V,\R)$.

By analogy with the Calabi conjecture from  K\"ahler geometry, it is natural to wonder conversely, whether
any representative $\eta \in c_1^{BC}(V)$ can be realised as the Ricci curvature
form of a hermitian metric $\omega_V$.
We provide a positive answer, as a consequence of Theorem \ref{thm: bounded sol semipositive} and Theorem \ref{thm:higher0}:

\begin{theorem} \label{thm:calabi}
Let $V$ be a compact hermitian variety with log terminal singularities
equipped with a hermitian form $\omega_V$.
For every smooth closed real $(1,1)$-form   $\eta$ in 
$c_1^{BC}(V)$, there exists a 
function $\f \in \PSH(V,\omega_V)$ 
such that
\begin{itemize}
\item $\f$ is globally bounded on $V$ and smooth in $V_{reg}$;
\item $\omega_V+dd^c \f$ is a hermitian form  and ${\rm Ric}(\omega_V+dd^c \f)=\eta$ in $V_{reg}$.
\end{itemize}
\end{theorem}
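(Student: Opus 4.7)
\emph{Proof proposal.} The plan is to reduce Theorem \ref{thm:calabi} to the Monge-Amp\`ere existence results established earlier in this paper. Since $\eta$ and $\Ric(\omega_V)$ both represent $c_1^{BC}(V)$, we may write $\eta = \Ric(\omega_V) + dd^c h$ for some smooth function $h$ on $V$. Using the adapted volume form $\mu_{\phi}$, a direct computation gives the Chern-Ricci identity: if $\tilde\varphi \in \PSH(V,\omega_V)$ satisfies $\omega'_V := \omega_V + dd^c \tilde\varphi > 0$ on $V_{\mathrm{reg}}$ together with
\[
(\omega'_V)^n = c\, e^{-h}\,\omega_V^n \quad \text{on } V_{\mathrm{reg}}
\]
for some constant $c>0$, then $\Ric(\omega'_V) = \Ric(\omega_V) + dd^c h = \eta$ on $V_{\mathrm{reg}}$. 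So the task is to produce such a $\tilde\varphi$.

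First I pass to a log resolution $\pi : X \to V$ and set $\omega := \pi^* \omega_V$, which is semi-positive and big as noted after Definition 1.6. Using $K_X = \pi^* K_V + \sum_i a_i E_i$ with discrepancies $a_i > -1$, the pull-back of the target measure takes the form $\pi^*(e^{-h}\omega_V^n) = f\, dV_X$ with
\[
f = e^{-\pi^* h}\, g\, \prod_i |s_{E_i}|^{2 a_i},
\]
where $g$ is smooth and strictly positive and $s_{E_i}$ are defining sections of $E_i$. The log-terminal condition $a_i > -1$ guarantees that each $|s_{E_i}|^{2a_i}$ is in $L^p_{\mathrm{loc}}$ for some $p>1$, hence $f \in L^p(dV_X)$. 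Moreover $f$ is smooth and strictly positive outside $D := \bigcup_i E_i$, and separating the terms $2a_i \log|s_{E_i}|$ according to the sign of $a_i$ one writes $\log f = \psi^+ - \psi^-$ with $\psi^\pm$ quasi-plurisubharmonic on $X$. Theorem \ref{thm:higher0} then supplies a unique $c > 0$ and a bounded $\varphi \in \PSH(X,\omega)$, smooth on $X \setminus D$, solving $(\omega + dd^c \varphi)^n = c\, f\, dV_X$ on $X$.

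Next I descend $\varphi$ to $V$. On each exceptional fibre $F = \pi^{-1}(p)$ above a singular point $p \in V_{\mathrm{sing}}$, one has $\omega|_F = (\pi^*\omega_V)|_F = 0$, so $\varphi|_F$ is plurisubharmonic on the compact connected complex manifold $F$ and therefore constant. Hence $\varphi$ descends to a bounded function $\tilde\varphi$ on $V$ which is smooth and $\omega_V$-plurisubharmonic on $V_{\mathrm{reg}}$; being bounded it extends uniquely across the analytic subset $V_{\mathrm{sing}}$ as an element of $\PSH(V,\omega_V)$. On $V_{\mathrm{reg}}$ the form $\omega'_V := \omega_V + dd^c \tilde\varphi$ is smooth and semi-positive, and pushing the Monge-Amp\`ere equation forward by the biholomorphism $\pi\colon X \setminus D \to V_{\mathrm{reg}}$ yields $(\omega'_V)^n = c\, e^{-h}\,\omega_V^n > 0$. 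A smooth semi-positive $(1,1)$-form whose $n$-th power is strictly positive is itself strictly positive, so $\omega'_V$ is hermitian on $V_{\mathrm{reg}}$, and the Ricci identity recalled above gives $\Ric(\omega'_V) = \eta$.

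The heavy analytic input is concentrated in Theorem \ref{thm:higher0}. The two main obstacles I anticipate are: (i) the integrability bookkeeping producing $f \in L^p$ together with a clean quasi-psh decomposition $\log f = \psi^+ - \psi^-$, which depends decisively on log-terminality ($a_i > -1$); and (ii) the descent/extension step, i.e.\ verifying that the bounded $\omega$-psh solution on $X$ actually defines a function in $\PSH(V,\omega_V)$ on the singular variety $V$, and that $\omega_V + dd^c \tilde\varphi$ is genuinely a hermitian form on all of $V_{\mathrm{reg}}$.
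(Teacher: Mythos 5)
Your proposal is correct and follows essentially the same route as the paper: reduce to the Monge-Amp\`ere equation $(\omega_V+dd^c\f)^n=c\,\mu$, pull back to a log resolution where the density becomes $e^{\p^+-\p^-}$ with quasi-psh $\p^\pm$ (and lies in $L^p$, $p>1$, by log-terminality), solve via Theorem \ref{thm:higher0} using that $\omega=\pi^*\omega_V$ is semi-positive and big, and push the solution forward. Your treatment of the descent step (constancy on exceptional fibres) is more explicit than the paper's terse ``$\f=\pi_*\tilde\f$,'' but the argument is the same.
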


In particular if $c_1^{BC}(V)=0$, any hermitian form $\omega_V$ is "$dd^c$-cohomologous" to a 
Ricci flat hermitian current. Understanding the asymptotic behavior of these singular Ricci flat currents
near the singularities of $V$ is, as in the K\"ahler case, an important open problem.

\begin{proof}
It is classical that solving the (singular) Calabi conjecture is equivalent to
solving a complex Monge-Amp\`ere equation.
We let $\pi:X \rightarrow V$ denote a log resolution of singularities
and observe that
$$
\pi^* \mu_{\phi}=f dV,
\; \; \text{ where } \; \; 
f=\prod_{i=1}^k |s_i|^{2a_i}
$$
has poles (corresponding to $a_i<0$) or zeroes (corresponding to $a_i>0$)
 along the exceptional divisors $E_i=(s_i=0)$ and
$dV$ is a smooth volume form on $X$.

We set $\p^+=\sum_{a_i>0} 2 a_i\log|s_i|$, $\p^-=\sum_{a_i<0} 2 |a_i|\log|s_i|$,
and fix $\phi$ a smooth metric of $K_V$ such that $\eta=-dd^c \phi$.
Finding $\omega_V+dd^c \f$ such that 
${\rm Ric}(\omega_V+dd^c \f)=\eta$
is thus equivalent to solving the Monge-Amp\`ere equation
$(\omega_V+dd^c \f)^n=c \mu_{\phi}$.
Passing to the resolution this boils down to solve
$$
(\omega+dd^c \tilde{\f})^n
=c e^{\p^+-\p^-} dV
$$
on $X$, where $\omega=\pi^* \omega_V$ and $\tilde{\f}=\f \circ \pi \in \PSH(X,\omega)$.

Since $\omega$ is semi-positive and big, and since $\p^{\pm}$ are quasi-plurisubharmonic functions
which are smooth in $X^0=\pi^{-1}(V_{reg})$, it follows from
Theorem \ref{thm: bounded sol semipositive} and Theorem \ref{thm:higher0}
that there exists a  solution
$\tilde{\f}$ with all the required properties.
The function $\f=\pi_* \tilde{\f}$ is the potential we were looking for.
\end{proof}

\end{document}